\newcommand\restr[2]{{% we make the whole thing an ordinary symbol 
\left.
\kern-
\nulldelimiterspace % automatically resize the bar with \right 
#1 % the function \vphantom{\big|} % pretend it's a little taller at normal size 
\right|_{#2} % this is the delimiter 
}}
\def\keywords{\xdef\@thefnmark{}\@footnotetext}
\def\subsection{\@startsection{subsection}{2}%
  \z@{.5\linespacing\@plus.7\linespacing}
% make this >0 for display heading
{.5\baselineskip}%
% add centering
  {\normalfont\centering\scshape}%
}
\DeclareMathOperator{\dist}{dist}
\DeclareMathOperator{\interior}{int}
\DeclareMathOperator{\identity}{id}
\DeclareMathOperator{\diam}{diam}
\DeclareMathOperator{\supp}{supp}
\DeclareMathOperator{\kernel}{Ker}
\DeclareMathOperator{\Ann}{Ann}
\def\mathcenterto#1#2{\mathclap{\phantom{#1}\mathclap{#2}}\phantom{#1}}
\let\old@widetilde\widetilde
\def\widetildeto#1#2{\mathcenterto{#2}{\old@widetilde{\mathcenterto{#1}{#2\,}}}}
\let\old@widehat\widehat
\def\widehatto#1#2{\mathcenterto{#2}{\old@widehat{\mathcenterto{#1}{#2\,}}}}
\newcommand{\defeq}{\vcentcolon=}
\newcommand{\ds}{\displaystyle}
\newtheorem{theorem}{Theorem}[section]
\newtheorem*{thm:alg_c_word_p}{Theorem \ref{t.alg_c_word_p}}
\newtheorem{prop}[theorem]{Proposition}
\newtheorem{lemma}[theorem]{Lemma}
\newtheorem{cor}[theorem]{Corollary}
\theoremstyle{definition}
\newtheorem{defi}[theorem]{Definition}
\theoremstyle{definition}
\theoremstyle{definition}
\newtheorem{remark}[theorem]{Remark}
\theoremstyle{definition}
\newtheorem{notation}[theorem]{Notation}
\theoremstyle{definition}
\newtheorem{obs}[theorem]{Observation}
\theoremstyle{definition}
\theoremstyle{definition}
\newtheorem{question}[theorem]{Question}
\theoremstyle{definition}
\newtheorem{problem}[theorem]{Problem}
\def\nat{\ensuremath\mathbb{N}}
\def\int{\ensuremath\mathbb{Z}}
\def\rat{\ensuremath\mathbb{Q}}
\def\real{\ensuremath\mathbb{R}}
\def\TT{\ensuremath\mathbb{T}^\nat}
\def\circle{\ensuremath\mathbb{T}}
\def\eps{\ensuremath\varepsilon}
\def\cals{\ensuremath\mathcal{S}}
\def\caln{\ensuremath\mathcal{N}}
\def\calf{\ensuremath\mathcal{F}}
\def\calu{\ensuremath\mathcal{U}}
\def\calb{\ensuremath\mathcal{B}}
\def\calp{\ensuremath\mathcal{P}}
\def\calc{\ensuremath\mathcal{C}}
\def\calk{\ensuremath\mathcal{K}}
\def\calg{\ensuremath\mathcal{G}}
\def\cala{\ensuremath\mathcal{A}}
\def\cald{\ensuremath\mathcal{D}}
\def\cale{\ensuremath\mathcal{E}}
\def\cali{\ensuremath\mathcal{I}}
\def\calt{\ensuremath\mathcal{T}}
\def\nnn{\ensuremath\nat^{\nat\times\nat}}
\def\zdo{\ensuremath\mathbb{Z}^{(\nat)}}
\def\wtilde{\ensuremath\widetilde}
\def\what{\ensuremath\widehat}
\def\embeds{\ensuremath\hookrightarrow}
\def\su{\ensuremath\mathbb{SU}}
\def\hconv{\ensuremath\overset{d_H}{\longrightarrow}}
\def\nsub{\ensuremath\triangleleft}
\title{Generic properties of topological groups}
\author[M. Elekes]{M{\'a}rton Elekes}
\address{HUN-REN Alfréd Rényi Institute of Mathematics, Reáltanoda u. 13-15, 1053 Budapest, Hungary, AND ELTE Eötvös Loránd University, Budapest, Hungary,
URL: http://www.renyi.hu/$\sim$emarci, ORCID: 0000-0002-5139-2169}
\email{elekes.marton@renyi.hu}
\author[B. Geh{\'e}r]{Bogl{\'a}rka  Geh{\'e}r}
\address{ELTE Eötvös Loránd University, Budapest, Hungary}
\email{bogigeher@gmail.com}
\author[T. K{\'a}tay]{Tam{\'a}s K{\'a}tay}
\address{ELTE Eötvös Loránd University, Budapest, Hungary, ORCID: 0000-0002-7917-5887}
\email{13heted@gmail.com}
\author[T. Keleti]{Tam{\'a}s Keleti}
\address{ELTE Eötvös Loránd University, Budapest, Hungary, ORCID: 0000-0003-4849-5287}
\email{tamas.keleti@gmail.com}
\author[A. Kocsis]{Anett Kocsis}
\address{ELTE Eötvös Loránd University, Budapest, Hungary}
\email{sakkboszi@gmail.com}
\author[M. P{\'a}lfy]{M{\'a}t{\'e} P{\'a}lfy}
\address{ELTE Eötvös Loránd University, Budapest, Hungary}
\email{palfymateandras@gmail.com}
\begin{document}

\subjclass[2020]{Primary 03E15; Secondary 22B05, 22C05}
\keywords{\textit{Keywords and phrases.} Baire category, typical property, algebraically closed group, word problem, compact group, abelian group, Pontryagin duality, Hausdorff distance}

\begin{abstract}
We study generic properties of topological groups in the sense of Baire category.

First, we investigate countably infinite groups. We extend a classical result of B.~H.~Neumann, H.~Simmons and A.~Macintyre on algebraically closed groups and the word problem. Recently, I.~Goldbring, S.~Kunnawalkam Elayavalli, and Y.~Lodha proved that every isomorphism class is meager among countably infinite groups. In contrast, it follows from the work of W.~Hodges on model-theoretic forcing that there exists a comeager isomorphism class among countably infinite abelian groups. We present a new elementary proof of this result.

Then we turn to compact metrizable abelian groups. We use Pontryagin duality to show that there is a comeager isomorphism class among compact metrizable abelian groups. We discuss its connections to the countably infinite case.

Finally, we study compact metrizable groups. We prove that the generic compact metrizable group is neither connected nor totally disconnected; also it is neither torsion-free nor a torsion group.
\end{abstract}

\maketitle

%\blfootnote{
%}

\newpage

\tableofcontents

\section{Introduction}

\textbf{Motivation.} Generic properties in the sense of Baire category have been extensively studied in numerous branches of mathematics. For example, there is a vast literature on the behavior of the generic continuous function.

More recently, E.~Akin, M.~Hurley, and J.~A.~Kennedy \cite{AKIN} investigated the dynamics of generic homeomorphisms of compact spaces. A.~Kechris and C.~Rosendal \cite{KECHRIS_ROSENDAL} studied the automorphism groups of homogeneous countable structures. Among numerous other results they characterized when an automorphism group admits a comeager conjugacy class.

Even more recently, M.~Doucha and M.~Malicki \cite{DOUCHA} examined generic representations of discrete countable groups in Polish groups. I.~Goldbring, S.~Kunnawalkam Elayavalli and Y.~Lodha \cite{GOLDBRING} studied generic algebraic properties in spaces of enumerated groups.
%Their approach is essentially equivalent to what we introduce in Section~\ref{s.countably_infinite_discrete_groups}.
Among many other results, they proved that in the space $\calg'$ of all enumerated countably infinite groups the following hold:
\begin{itemize}
    \item every group property $\calp\subseteq\calg'$ with the Baire property is either meager or comeager \cite[Theorem~5.1.1]{GOLDBRING};
    \item every isomorphism class is meager in $\calg'$ \cite[Theorem~1.1.6 and Remark 1.1.4]{GOLDBRING};
    \item algebraically closed groups form a comeager set in $\calg'$ \cite[Lemma~5.2.7]{GOLDBRING}.
\end{itemize}
%For the sake of completeness and understandability, we present our own development. It follows our paper \cite{ELEKES}, which is available only on arXiv. We developed the results of \cite{ELEKES} independently of Goldbring, Kunnawalkam and Lodha. After we encountered their paper \cite{GOLDBRING}, we decided to not publish \cite{ELEKES} but rather to extend the theory to topological groups. This culminated in the present paper, which contains results of \cite{ELEKES} not covered by \cite{GOLDBRING}, as well as new ones on compact groups.
To maintain the coherence and clarity of the paper, in Subsection~3.1, we provide several definitions and basic observations that were essentially already present in \cite{GOLDBRING}. Also, we make heavy use of a special case of \cite[Lemma~5.2.7]{GOLDBRING}, which we state and prove as Theorem~\ref{t.alg_closed}.

\textbf{Setup and goals.} First, we extend the theory developed by I.~Goldbring, S.~Kunnawalkam Elayavalli, and Y.~Lodha: we study further generic properties of countably infinite groups. Then we turn to a more general family of problems: generic properties of topological groups. We investigate two important classes of topological groups:

In Section~\ref{s.cma_groups}, we study generic properties of compact metrizable abelian groups by equipping the set of compact subgroups of a universal compact metrizable abelian group $\TT$ with the Hausdorff metric. We denote this space by $\cals(\TT)$ and we view it as \textit{the space of compact metrizable abelian groups}. This setup was introduced in \cite{FISHER_GARTSIDE_1} and basic properties of spaces of the form $\cals(G)$ with compact $G$ were studied in \cite{FISHER_GARTSIDE_1} and \cite{FISHER_GARTSIDE_2}.

In Section~\ref{s.compact_metrizable_groups}, we use the same approach for compact metrizable groups (with the universal compact metrizable group $\prod_{n=1}^\infty SU(n)$).

\textbf{Main results and the organization of the paper.} Section~\ref{s.preliminaries} provides preliminaries in topology and algebra.

Section~\ref{s.countably_infinite_discrete_groups} focuses on two results. First, we extend a classical result of B.~H.~Neumann, H.~Simmons, and A.~Macintyre on algebraically closed groups and the word problem while we explore the generic subgroup structure of a countably infinite discrete group. Second, we give a new, elementary proof of the existence of a comeager isomorphism class among countably infinite discrete \emph{abelian} groups, which was previously known only by utilizing the work of W.~Hodges on model-theoretic forcing.

%It was brought to our attention after the completion of the present paper that this result follows from \cite[Exercise~4.2.8]{HODGES_1985}, which utilizes the technique of model-theoretic forcing. Since our proof is based on much more elementary prerequisites, we think it is still worth presenting.

In Section~\ref{s.cma_groups}, we make heavy use of Pontryagin duality to prove that there is a comeager isomorphism class in the space of compact metrizable abelian groups. We discuss connections to the countable discrete abelian case.

In Section~\ref{s.compact_metrizable_groups}, we drop commutativity and study compact metrizable groups as compact subgroups of $\prod_{n=1}^\infty SU(n)$. Here we find a much less clear picture: the generic group proves to be heterogeneous both topologically and algebraically.

\section{Preliminaries}\label{s.preliminaries}

All topological groups are assumed to be Hausdorff.

\begin{notation}
For convenience let $\nat\defeq\{1,2,3,\ldots\}$ throughout this paper.
\end{notation} 

\begin{notation}\label{n.projection}
Let $(X_n)_{n\in \nat}$ be topological groups. Then let us define
$$\pi_{[n]}:\prod_{k\in\nat} X_k\to\prod_{k\in\nat} X_k,\qquad (x(1),x(2),\ldots)\mapsto(x(1),\ldots,x(n),1_{X_{n+1}},1_{X_{n+2}},\ldots).$$
and
$$\pi_n:\prod_{k\in\nat}X_k\to X_n,\qquad (x(1),x(2),\ldots)\mapsto x(n).$$
\end{notation}

Notice that the $\pi_n$ are surjective continuous homomorphisms and the range of $\pi_{[n]}$ is canonically isomorphic to $\prod_{k=1}^n X_k$.

\subsection{The space of compact subgroups}\label{ss.space_of_subgroups}

For a topological space $X$ let $\calk(X)$ denote the set of nonempty compact subsets of $X$. It is a topological space with the Vietoris topology, that is, the topology generated by basis elements of the form
\begin{equation}\tag{$\star$}
\{K\in \calk(X):\ K\subseteq U,\ K\cap V_1\neq\emptyset,\ldots,K\cap V_n\neq\emptyset\}
\end{equation}
with $U,V_1,\ldots,V_n$ open in $X$.

If $(X,d)$ is a metric space, then $\calk(X)$ is also a metric space with the Hausdorff metric:
$$d_H(K,L)\defeq \inf\{\eps>0:\ K_\eps\supseteq L,\ L_\eps\supseteq K\},$$
where $A_\eps$ is the $\eps$-neighborhood of the set $A$. It is well-known that the Hausdorff metric induces the Vietoris topology. It is also well-known that $(\calk(X),d_H)$ inherits several topological properties of $(X,d)$. For example, if $X$ is compact, then $\calk(X)$ is also compact (see \cite[Subsection~4.F]{KECHRIS}).

We will need the following well-known lemma.

\begin{lemma}\label{l.continuous_image}
Let $X$, $Y$ be topological spaces. A continuous map $f:X\to Y$ induces a continuous map:
$$\calk(f):\calk(X)\to \calk(Y),\quad K\mapsto f(K).$$
\end{lemma}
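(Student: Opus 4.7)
The plan is to verify continuity of $\calk(f)$ directly from the Vietoris topology, using its natural subbasis. First I would observe that $\calk(f)$ is well-defined: for $K\in\calk(X)$, the image $f(K)$ is nonempty and compact, so it lies in $\calk(Y)$.

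Next I would note that the Vietoris topology on $\calk(Y)$ is generated by the subbasis consisting of sets of the two forms
$$\{L\in\calk(Y):L\subseteq U\} \quad\text{and}\quad \{L\in\calk(Y):L\cap V\neq\emptyset\},$$
with $U,V$ open in $Y$ (one recovers the basis $(\star)$ by intersecting finitely many such subbasic sets). It therefore suffices to show that the $\calk(f)$-preimage of each such subbasic set is open in $\calk(X)$.

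The key computation is just a pair of set-theoretic identities. For the first type,
$$\calk(f)^{-1}\bigl(\{L:L\subseteq U\}\bigr)=\{K\in\calk(X):f(K)\subseteq U\}=\{K\in\calk(X):K\subseteq f^{-1}(U)\},$$
and for the second type,
$$\calk(f)^{-1}\bigl(\{L:L\cap V\neq\emptyset\}\bigr)=\{K\in\calk(X):K\cap f^{-1}(V)\neq\emptyset\}.$$
Since $f$ is continuous, $f^{-1}(U)$ and $f^{-1}(V)$ are open in $X$, so both preimages are subbasic open sets in $\calk(X)$.

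There is no real obstacle here; the proof is essentially a formal manipulation once one realizes that working with the subbasis reduces the task to the two trivial identities above (direct verification on the basis $(\star)$ would also work but would be slightly more notationally cumbersome). The only point worth flagging is that this argument uses only the Vietoris-topology characterization and makes no appeal to a metric, so the lemma holds for arbitrary topological $X$ and $Y$ as stated.
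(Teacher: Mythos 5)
Your proof is correct and is essentially the same argument the paper gestures at (the paper simply says it is easy to check that preimages of Vietoris basis sets of the form $(\star)$ are open). Your use of the subbasis rather than the basis is only a minor streamlining, not a different route.
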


\begin{proof}
It is easy to check that inverse images of basis elements of the form ($\star$) are open.
\end{proof}

Now we turn to topological groups.

\begin{prop}
For any topological group $G$ the set
$$\cals(G)\defeq\{K\in \calk(G):\ K\text{ is a subgroup of }G\}$$
is closed in $\calk(G)$.
\end{prop}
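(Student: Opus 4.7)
The plan is to show that the complement $\calk(G) \setminus \cals(G)$ is open. Fix $K \in \calk(G) \setminus \cals(G)$. Since a nonempty subset $L \subseteq G$ is a subgroup if and only if $xy^{-1} \in L$ for all $x, y \in L$, there must exist $x, y \in K$ with $z \defeq xy^{-1} \notin K$.

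Next I will use Hausdorffness and continuity of the group operations to produce a Vietoris-basic open neighborhood of $K$ that lies entirely in $\calk(G) \setminus \cals(G)$. Since $G$ is Hausdorff and $K$ is compact, I can pick disjoint open sets $U \supseteq K$ and $W \ni z$. By continuity of the map $(a,b) \mapsto ab^{-1}$, there exist open neighborhoods $V_x$ of $x$ and $V_y$ of $y$ with $V_x V_y^{-1} \subseteq W$; after shrinking, I may also assume $V_x, V_y \subseteq U$. Now consider the basic open set
\[
\calu \defeq \{L \in \calk(G) : L \subseteq U,\ L \cap V_x \neq \emptyset,\ L \cap V_y \neq \emptyset\}.
\]
Then $K \in \calu$ is witnessed by $x$ and $y$. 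Conversely, for any $L \in \calu$, pick $x' \in L \cap V_x$ and $y' \in L \cap V_y$; then $x'(y')^{-1} \in V_x V_y^{-1} \subseteq W$, while $L \subseteq U$ is disjoint from $W$, so $x'(y')^{-1} \notin L$ and $L$ is not a subgroup.

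The only step requiring any thought is the simultaneous packaging of the conditions \emph{$L$ contains near-copies of $x$ and $y$} and \emph{$L$ misses $xy^{-1}$} into a single Vietoris-basic open set. This works precisely because Hausdorffness separates the compact set $K$ from the point $xy^{-1}$, and the continuity of $(a,b) \mapsto ab^{-1}$ guarantees that the separation propagates to nearby pairs $(x', y')$, so the obstruction to being a subgroup is stable under small Hausdorff perturbations.
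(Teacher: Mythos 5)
Your proof is correct. However, it takes a genuinely different route from the paper's. The paper observes that $\cals(G)=\{K\in\calk(G):\ K\cdot K^{-1}=K\}$ and then appeals to Lemma~\ref{l.continuous_image}: the map $K\mapsto K\times K$ is continuous from $\calk(G)$ to $\calk(G\times G)$, and composing with $\calk$ applied to $(a,b)\mapsto ab^{-1}$ makes $K\mapsto K\cdot K^{-1}$ a continuous self-map of $\calk(G)$; since $\calk(G)$ is Hausdorff, the set where this map agrees with the identity is closed. You instead work directly with the Vietoris basis and show the complement is open by hand: for a non-subgroup $K$ you isolate a witnessing failure $x,y\in K$ with $xy^{-1}\notin K$, separate $K$ from $xy^{-1}$ using Hausdorffness and compactness, shrink neighborhoods of $x$ and $y$ using continuity of $(a,b)\mapsto ab^{-1}$, and package everything into one basic open set $\{L:\ L\subseteq U,\ L\cap V_x\neq\emptyset,\ L\cap V_y\neq\emptyset\}$ that avoids $\cals(G)$. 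Your argument is more elementary and self-contained, at the cost of being longer; the paper's is a one-liner modulo the general hyperspace machinery it has already set up (which it reuses elsewhere). Both are valid, and the choice between them is essentially a matter of taste and context.
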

\begin{proof}
Observe that
$$\cals(G)=\{K\in\calk(G):\ K\cdot K^{-1}=K\}.$$
It is easy to check that this set is closed. (Use Lemma~\ref{l.continuous_image}.)
%The set $\cals_1$ is clearly closed.
%For $\cals_2$ observe that the map $\calk(G)\to\calk(G\times G)$, $K\mapsto K\times K$ is continuous and also note that $K\cdot K$ is the image of $K\times K$ under the multiplication map. Thus by Lemma~\ref{l.continuous_image} the map $\psi_m:\calk(G)\to\calk(G)$, $K\mapsto K\cdot K$ is continuous. Then $\cals_2$ is just the set of fixed points of $\psi_m$, therefore it is closed.
%Similarly, $\cals_3$ is the set of fixed points of the map $\psi_i$ induced by the inverse map $i:G\to G$, $x\mapsto x^{-1}$.
\end{proof}

\begin{defi}\label{d.compact_subgroups}
We call the above defined $\cals(G)$ \textbf{the space of compact subgroups} of $G$. Note that for a compact metrizable group $G$ the space $\cals(G)$ is also compact metrizable.
\end{defi}

We need the following well-known fact, which is a special case of \cite[Theorem~1.6]{OPEN_MAP}.

\begin{theorem}\label{t.surj_cont_hom_open}
Let $G$ and $H$ be compact groups. If $\varphi:G\to H$ is a surjective continuous homomorphism, then it is open.
\end{theorem}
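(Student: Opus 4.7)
The plan is to carry out the standard Baire-category proof of the open mapping theorem in this compact setting. Since $\varphi$ is a homomorphism, it suffices to show that $\varphi(U)$ is a neighborhood of $1_H$ for every open neighborhood $U$ of $1_G$: left translation by $\varphi(g)$ then promotes this to the statement that $\varphi(V)$ is open around each $\varphi(g)\in\varphi(V)$, for arbitrary open $V\subseteq G$.

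Fix such a $U$. First I pick a symmetric open neighborhood $V$ of $1_G$ with $\overline{V}\cdot\overline{V}\subseteq U$, using continuity of multiplication and inversion together with the regularity of topological groups. Since $G$ is compact, finitely many left translates $g_1\overline{V},\ldots,g_n\overline{V}$ cover $G$, whence
\[
H \;=\; \varphi(G) \;=\; \bigcup_{i=1}^{n}\varphi(g_i)\cdot\varphi(\overline{V}).
\]
The continuous image $\varphi(\overline{V})$ is compact, hence closed in the Hausdorff space $H$, so the above is a finite \emph{closed} cover of $H$. Now Baire category enters: the compact Hausdorff space $H$ is Baire, so not all terms of a finite closed cover are nowhere dense, and therefore $\varphi(\overline{V})$ has nonempty interior. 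Pick $h_0=\varphi(v)$ in this interior with $v\in\overline{V}$. Then $h_0^{-1}\cdot\operatorname{int}\bigl(\varphi(\overline{V})\bigr)$ is an open neighborhood of $1_H$, and since $\overline{V}$ is symmetric (closure of a symmetric set),
\[
h_0^{-1}\cdot\varphi(\overline{V}) \;=\; \varphi(v^{-1}\overline{V}) \;\subseteq\; \varphi(\overline{V}\cdot\overline{V}) \;\subseteq\; \varphi(U),
\]
so $\varphi(U)$ contains an open neighborhood of $1_H$, as required.

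The only conceptual ingredient is the Baire-category step, which crucially combines compactness of $G$ (to produce a \emph{finite} cover of $H$ by translates of $\varphi(\overline{V})$, and to make $\varphi(\overline{V})$ closed) with compactness of $H$ (to invoke the Baire property). The one technical care is passing through $\overline{V}$ rather than $V$: it is $\overline{V}$ that is compact and whose image is guaranteed to be closed, while the strengthened choice $\overline{V}\cdot\overline{V}\subseteq U$ is precisely what keeps the resulting neighborhood inside $\varphi(U)$.
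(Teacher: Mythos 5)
Your proof is correct, and it is the standard Baire-category argument that in fact establishes the more general open-mapping theorem for a continuous surjective homomorphism from a locally compact $\sigma$-compact group onto a locally compact group. The paper does not prove this theorem but only cites it (as a special case of a general open-mapping theorem), so there is no in-paper proof to compare against; presumably the cited source runs the same Baire argument. Worth noting, though: in the purely compact setting that the paper actually needs, there is a shorter route that avoids Baire category altogether. The kernel $N=\ker\varphi$ is a closed normal subgroup, so $G/N$ is a compact Hausdorff group; the quotient map $q:G\to G/N$ is always open (for any quotient by a subgroup, since $q^{-1}(q(U))=UN$ is open whenever $U$ is); and the induced continuous bijection $\bar\varphi:G/N\to H$ is a continuous bijection from a compact space to a Hausdorff space, hence a homeomorphism. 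Therefore $\varphi=\bar\varphi\circ q$ is a composition of open maps. Your argument buys generality (it works well beyond compactness), while the quotient argument buys brevity and isolates exactly which compactness hypotheses are doing the work.
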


\subsection{Baire category}\label{ss.baire_category}

In this subsection we present well-known theorems and notions. All of them can be found in \cite{KECHRIS}.

A topological space $(X,\tau)$ is called \textbf{Polish} if it is separable and completely metrizable. Clearly, countable discrete spaces are Polish. In particular, $2=\{0,1\}$ and $\nat$ with the discrete topologies are Polish. The following proposition is well-known:

\begin{prop}\label{p.prod_polish}
Countable products of Polish spaces are Polish. In particular, $2^A$ and $\nat^A$ are Polish for any countable set $A$.
\end{prop}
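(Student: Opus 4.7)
The plan is to reduce the statement to the two standard ingredients — exhibiting a complete compatible metric and a countable dense subset — on a countable product $X=\prod_{n\in\nat}X_n$ of Polish spaces $(X_n,\tau_n)$.

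First I would replace each complete metric $d_n$ on $X_n$ with the bounded metric $d_n'\defeq \min(d_n,1)$ (or $d_n/(1+d_n)$). A standard check shows that $d_n'$ is still a complete metric inducing $\tau_n$, so without loss of generality we may assume each $d_n\le 1$. Then I would define
$$d(x,y)\defeq\sum_{n=1}^\infty 2^{-n}\,d_n(x(n),y(n)),$$
which converges uniformly. The usual verification shows that $d$ is a metric, and that a sequence $x^{(k)}$ converges in $d$ iff it converges coordinatewise, so $d$ induces the product topology on $X$.

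Next I would verify completeness of $d$. If $(x^{(k)})_k$ is Cauchy for $d$, then since $d_n(x^{(k)}(n),x^{(\ell)}(n))\le 2^n d(x^{(k)},x^{(\ell)})$ each coordinate sequence is Cauchy in $(X_n,d_n)$, and hence converges to some $y(n)\in X_n$. Setting $y\defeq(y(n))_{n\in\nat}$, a standard tail argument (split $\sum_{n=1}^\infty 2^{-n}d_n(x^{(k)}(n),y(n))$ into the first $N$ terms, handled by coordinatewise convergence, and the tail, bounded by $2^{-N}$ since $d_n\le 1$) shows $x^{(k)}\to y$ in $d$. This is the only step that requires any care; the rest is routine.

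For separability, fix a countable dense set $D_n\subseteq X_n$ and a basepoint $x_n^*\in D_n$ for each $n$, and let
$$D\defeq\bigcup_{N\in\nat}\bigl\{(x(n))_{n\in\nat}\in X:\ x(n)\in D_n\text{ for }n\le N,\ x(n)=x_n^*\text{ for }n>N\bigr\}.$$
Being a countable union of finite products of countable sets, $D$ is countable; and given any $y\in X$ and $\eps>0$, picking $N$ with $2^{-N}<\eps/2$ and then approximating each of the first $N$ coordinates within $\eps/2$ in $d_n$ produces an element of $D$ within $\eps$ of $y$. This shows $X$ is separable, hence Polish. The second assertion follows immediately by taking each $X_n$ to be the Polish space $2$ or $\nat$ (and noting that a countable $A$ may be identified with $\nat$, or is finite, in which case the product is even discrete).
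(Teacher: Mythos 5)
Your argument is correct and is the standard textbook proof of this well-known fact; the paper states the proposition without proof, deferring to the literature, and your write-up supplies exactly the details one expects: truncate each metric to be bounded, form the weighted $\ell^1$-type sum metric, check it induces the product topology and is complete, and exhibit the usual countable dense set of eventually-constant sequences.
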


A subset $E$ of a topological space $X$ is called \textbf{nowhere dense} if the closure of $E$ has empty interior, \textbf{meager} if it is a countable union of nowhere dense sets, and \textbf{comeager} if its complement is meager.

\begin{theorem}[Baire Category Theorem]\label{t.bct}
In a completely metrizable space every nonempty open set is nonmeager.
\end{theorem}

If $X$ is a completely metrizable space of objects (sets, functions, groups, etc.), we say that the property $P$ is \textbf{generic} in $X$ if $\{x\in X:\ x\text{ is of property }P\}$ is comeager in $X$. We also say that the generic $x\in X$ has property $P$.

Another well-known theorem characterizes Polish subspaces of Polish spaces.

\begin{theorem}\label{t.G_delta}
A subspace $E$ of a Polish space $X$ is Polish if and only if $E$ is $G_\delta$ in $X$. In particular, closed subspaces are Polish.
\end{theorem}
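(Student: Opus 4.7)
The plan is to prove each direction separately, following the classical Alexandrov--Mazurkiewicz characterization.

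For "$G_\delta$ implies Polish" I would proceed in three steps. A closed subspace of a Polish space is Polish because the restriction of a complete compatible metric is complete on any closed subset, and separability is inherited. An open subspace $U$ of a Polish space $(X,d)$ (with $d$ complete) is Polish with the equivalent metric
$$d'(x,y) \defeq d(x,y) + \left|\frac{1}{d(x, X\setminus U)} - \frac{1}{d(y, X\setminus U)}\right|,$$
since any $d'$-Cauchy sequence is $d$-Cauchy with bounded distance-to-complement, and thus cannot converge to a point outside $U$. For a general $G_\delta$ set $E = \bigcap_{n=1}^\infty U_n$ with each $U_n$ open, the diagonal map $x \mapsto (x,x,\ldots)$ sends $E$ homeomorphically onto a closed subset of $\prod_{n=1}^\infty U_n$, which is Polish by the open case together with Proposition~\ref{p.prod_polish}.

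For the converse, suppose the subspace $E \subseteq X$ admits a complete metric $d'$ inducing the subspace topology (not a priori related to any metric on $X$). Since $\overline{E}$ is Polish by the closed case, it suffices to show $E$ is $G_\delta$ in $\overline{E}$, so I may replace $X$ by $\overline{E}$ and assume $E$ is dense in $X$. For each $n \in \nat$, let $U_n$ be the union of all open sets $V \subseteq X$ such that $V \cap E$ is nonempty and has $d'$-diameter less than $1/n$. Each $U_n$ is open in $X$ and contains $E$, because the topology on $E$ induced by $d'$ is the subspace topology. The crucial step is $\bigcap_n U_n \subseteq E$: for $x$ in this intersection, choose open $V_n \ni x$ in $X$ with $V_n \cap E$ of $d'$-diameter less than $1/n$, and by density of $E$ in $X$ pick $e_n \in V_1 \cap \cdots \cap V_n \cap E$. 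The sequence $(e_n)$ is $d'$-Cauchy in $E$, hence $d'$-converges to some $e \in E$, which is also convergence in $X$; but $(e_n)$ also converges to $x$ in $X$, so $x = e \in E$ by Hausdorffness.

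The main obstacle lies in the converse direction: the metric $d'$ is intrinsic to $E$, yet we must produce a $G_\delta$ description of $E$ inside $X$. The resolution is the reduction to the dense case, together with the observation that although $d'$ only "sees" points of $E$, small $d'$-open sets in $E$ are traces of open sets in $X$ by the definition of the subspace topology; this lets one translate $d'$-Cauchy sequences into $X$-convergent sequences and recognize that their limits lie in $E$.
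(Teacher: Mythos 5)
The paper states this theorem without proof; it is cited as a well-known fact from \cite{KECHRIS} (the classical Alexandrov theorem), so there is no in-paper argument to compare against. Your forward direction is sound: the closed case is immediate, the open case works with the metric $d'$ you display, and the general $G_\delta$ case follows since the diagonal of $X^{\nat}$ is closed and its intersection with $\prod_n U_n$ is exactly the image of $E$ under the diagonal embedding.

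The converse, however, has a genuine gap at the last step. You take open $V_n\ni x$ with $V_n\cap E$ nonempty of $d'$-diameter less than $1/n$ and pick $e_n\in V_1\cap\cdots\cap V_n\cap E$ by density. That $(e_n)$ is $d'$-Cauchy and converges in $X$ to some $e\in E$ is correct, but the assertion that $(e_n)$ also converges to $x$ in $X$ does not follow from your construction: nothing in your definition of $U_n$ constrains the $X$-size of $V_n$, only the $d'$-size of $V_n\cap E$, so the sets $V_1\cap\cdots\cap V_n$ need not shrink toward $x$ and $e_n$ has no reason to approach $x$. The repair is short: after choosing $V_n$, replace it by $V_n\cap B(x,1/n)$ for a fixed complete compatible metric on $X$ (this is still open, still contains $x$, still meets $E$ by density, and its trace on $E$ still has $d'$-diameter below $1/n$), or equivalently build the requirement $\diam(V)<1/n$ into the definition of $U_n$, as in the standard proof. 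With that refinement $e_n\to x$ in $X$ holds by construction and the Hausdorff argument closes the proof; as written, the final convergence is a jump rather than a consequence of what precedes it.
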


\begin{remark}\label{r.baire_category}
It is well-known that a subset of a Polish space is comeager if and only if it contains a dense $G_\delta$ set. It follows immediately that an $F_\sigma$ subset of a Polish space is nonmeager if and only if it has nonempty interior.
\end{remark}

%A subset $E$ of a topological space $X$ has the \textbf{Baire property} (BP) if it can be written as $U\Delta M$ with $U$ open and $M$ meager in $X$ (where $\Delta$ denotes the symmetric difference). It is well-known that sets with the BP form a $\sigma$-algebra and therefore every Borel set has the BP.

\subsection{Group theory}\label{ss.algebra}

We will need the following notion from combinatorial group theory.

\begin{defi}
A finitely generated group $G$ has \textbf{solvable word problem} if there exists a Turing machine that decides for every word in the generators of $G$ whether it represents the identity element. It is easy to see that the solvability of the word problem is independent of the choice of the finite generating set.
\end{defi}

Let $F$ be a free group on the infinite generating set $X=\{x_1,x_2,\dots\}$ and $G$ be any group. Recall (see \cite[Chapter~3]{MASSEY}) that an element of the free product $F\star G$ is a word whose letters are from $X$ and $G$. Intuitively, in a word $w\in F\star G$ letters from $X$ are variables and letters from $G$ are parameters.

Let $E$ and $I$ be finite subsets of $F\star G$. We view $E$ as a set of \textbf{equations} and $I$ as a set of \textbf{inequations}. A \textbf{solution} of the system $(E,I)$ in $G$ is a homomorphism $f: F\to G$ such that the unique homomorphism $\wtilde f: F\star G\to G$ extending both $f$ and the identity of $G$ maps every $e\in E$ to $1_G$ and does not map any $i\in I$ to $1_G$.

The system $(E,I)$ is \textbf{consistent with $G$} if it has a solution in some larger group $H\geq G$. That is, if there exists a group $H$
and an embedding $h:G\to H$ such that for the unique 
homomorphism $\wtilde h:F\star G\to F\star H$ extending both $h$ and the identity of $F$, the system $\left(\wtilde h (E),\wtilde h(I)\right)$ has a solution in $H$.

\begin{defi}\label{d.alg_closed}
The group $G$ is \textbf{algebraically closed} if every finite system $(E,I)$ of equations and inequations that is consistent with $G$ has a solution in $G$. 
\end{defi}

\begin{remark}
It follows immediately from the definition that every algebraically closed group is infinite. Some authors prefer the term existentially closed that comes from model theory and they reserve the term algebraically closed for the case when one does not allow inequations in the definition. However, B.~H.~Neumann proved in \cite{NEUMANN2} that the two notions coincide except for the trivial group. Since we study only infinite groups, the terminology will cause no confusion.
\end{remark}

By a standard closure argument, one easily verifies the following.

\begin{theorem}[{Scott, \cite[Theorem~1]{SCOTT}}]\label{t.emb_alg_closed}
Every countable group can be embedded into a countable algebraically closed group. In particular, algebraically closed groups exist.
\end{theorem}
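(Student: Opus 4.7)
The plan is a standard closure argument: inductively build an ascending chain of countable supergroups of $G$, ensuring that any finite system consistent with an earlier stage is solved at some later stage, and then take the union.

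The key step is a \emph{single-step lemma}: for every countable group $H$ there is a countable group $H^+\supseteq H$ such that every finite system $(E,I)\subseteq F\star H$ consistent with $H$ has a solution in $H^+$. Since $F\star H$ is countable, there are only countably many such systems; enumerate them as $(E_k,I_k)_{k\in\nat}$. For each $k$, consistency supplies an embedding $H\hookrightarrow L_k$ with a solution in $L_k$; replacing $L_k$ by the subgroup generated by the image of $H$ together with the chosen solution, we may assume $L_k$ is countable. Now form the iterated amalgamated free product
$$H^{(0)}=H,\qquad H^{(k+1)}=H^{(k)}*_H L_k,\qquad H^+=\bigcup_{k\in\nat}H^{(k)}.$$
Each amalgamated free product of countable groups is countable, and the canonical embeddings $L_k\hookrightarrow H^{(k+1)}\subseteq H^+$ fix $H$ pointwise, so the solution of $(E_k,I_k)$ in $L_k$ pushes forward to a solution in $H^+$.

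Now iterate: set $G_0=G$, $G_{n+1}=G_n^+$, and $G^*=\bigcup_{n\in\nat}G_n$. Then $G^*$ is countable and contains $G$. To verify that $G^*$ is algebraically closed, let $(E,I)\subseteq F\star G^*$ be finite and consistent with $G^*$. Only finitely many elements of $G^*$ appear in $E\cup I$, so $(E,I)\subseteq F\star G_n$ for some $n$. Consistency with $G^*$ is witnessed by an embedding $G^*\hookrightarrow H$ with a solution in $H$; composing with $G_n\hookrightarrow G^*$ yields consistency with $G_n$ as well. Hence by construction $(E,I)$ has a solution in $G_{n+1}\subseteq G^*$. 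The ``in particular'' statement follows by applying the embedding part to any countable group (for instance the trivial one).

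The main technical obstacle is the single-step lemma: one must solve countably many systems simultaneously while ensuring that later amalgamations do not destroy previously obtained solutions. This is handled by the classical fact that in an iterated amalgamated free product over $H$, each factor $L_k$ embeds faithfully, so inequations witnessed in $L_k$ remain witnessed in $H^+$. Once this group-theoretic ingredient is granted, the rest of the argument is pure bookkeeping.
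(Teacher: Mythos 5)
Your argument is correct and is precisely the ``standard closure argument'' the paper alludes to without spelling out (deferring to Scott's original paper). The two-level tower construction --- a single-step lemma solving all systems consistent with the current stage via an iterated amalgamated free product, followed by a countable chain whose union is the desired algebraically closed group --- is the canonical proof, and all the delicate points are handled correctly: passing to the countable subgroup generated by the image of $H$ and the finitely many solution elements preserves solutions; equations are preserved by any homomorphism while inequations survive precisely because each factor $L_k$ embeds faithfully into the iterated amalgamated free product over $H$; and a finite system over $G^*$ has its parameters in some $G_n$, so its consistency with $G^*$ restricts to consistency with $G_n$ and is then discharged at stage $n+1$.
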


We also need the following proposition.

\begin{prop}\label{p.count_inf_alg_c_sgp}
Every algebraically closed group has a countably infinite algebraically closed subgroup.
\end{prop}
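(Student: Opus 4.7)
The strategy is a downward L\"owenheim--Skolem-style closure argument performed inside $G$. The one group-theoretic ingredient I need is the following \emph{consistency transfer} observation: for every subgroup $K \leq G$ and every finite system $(E, I)$ whose coefficients lie in $K$, the system is consistent with $K$ if and only if it is consistent with $G$. One direction is immediate, since any overgroup of $G$ is also an overgroup of $K$. For the other direction, if $(E, I)$ has a solution in some $K' \geq K$, form the free product with amalgamation $G *_K K'$; this is a group containing both $G$ and $K'$ as subgroups, so the solution from $K'$ persists there and witnesses consistency with $G$.

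Granted this, the construction is routine. Since $G$ is infinite, pick any countably infinite subgroup $H_0 \leq G$, and recursively define a chain $H_0 \leq H_1 \leq H_2 \leq \cdots$ of countable subgroups of $G$ as follows. For each $n$, there are only countably many finite systems of equations and inequations with coefficients in $H_n$; enumerate them. For every such system that is consistent with $H_n$, the transfer observation gives that it is also consistent with $G$, hence admits a solution in $G$ by algebraic closure of $G$; choose one such solution (a finite tuple) per system. Let $H_{n+1}$ be the subgroup of $G$ generated by $H_n$ together with all these chosen tuples; it is still countable.

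Set $H \defeq \bigcup_{n \in \nat} H_n$, a countable subgroup of $G$ containing $H_0$, hence countably infinite. To verify that $H$ is algebraically closed, take any finite system $(E, I)$ with coefficients in $H$ that is consistent with $H$. Its finitely many coefficients lie in some $H_n$, and the same witnessing overgroup shows that $(E, I)$ is also consistent with $H_n$. By construction, a solution was added to $H_{n+1} \subseteq H$ at stage $n+1$, completing the verification.

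The main obstacle is the consistency transfer observation; everything else is a bookkeeping closure argument. The substantive content lies in the existence of the amalgamated free product with its standard embedding property, a classical fact from combinatorial group theory (cf.\ \cite{MASSEY}).
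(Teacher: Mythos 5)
Your proof is correct, and it takes a genuinely different (if closely related) route from the paper's. Both proofs turn on the same key group-theoretic fact — that a finite system with coefficients in a subgroup $K \leq G$ which is consistent with $K$ is also consistent with $G$, via the embedding property of the amalgamated free product (Theorem~\ref{t.amalgamated_free_product}) — but they deploy it differently. The paper invokes the Downward L\"owenheim--Skolem theorem (Theorem~\ref{t.lowenheim_skolem}) to produce a countably infinite \emph{elementary} subgroup $G$ of $H$ in one stroke, and then uses elementarity to pull a solution back from $H$ to $G$. You instead unwind that machinery: rather than relying on elementarity, you run an explicit Skolem-hull closure construction, repeatedly adjoining solutions (taken in $G$ by algebraic closure of $G$) of all relevant finite systems and taking the countable union. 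What you gain is self-containment — no need to introduce elementary submodels or cite Downward LS — at the cost of a slightly longer bookkeeping argument. Both are valid; yours is essentially the proof of Downward LS specialized to the formulas that matter here, which is a reasonable trade-off in a paper that already relies on amalgamated free products for Scott's theorem.
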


To prove this we need two theorems.

\begin{theorem}\label{t.amalgamated_free_product}\cite[Chapter~III]{NEUMANN3}
If $H$ and $K$ are groups with a common subgroup $G=H\cap K$, then there exists a group $M$ such that $H$ and $K$ are subgroups of $M$.
\end{theorem}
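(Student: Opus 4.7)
The plan is to construct $M$ as the \emph{amalgamated free product} $H\star_G K$ and verify directly that the natural maps from $H$ and $K$ into $M$ are injective. I would begin by choosing left transversals $T_H\subseteq H$ and $T_K\subseteq K$ for the common subgroup $G$, each containing the identity, so that every element of $H$ has a unique expression $tg$ with $t\in T_H$ and $g\in G$, and similarly for $K$.

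Next I would let $W$ denote the set of all finite sequences $(g,t_1,t_2,\ldots,t_n)$ with $g\in G$, each $t_i\in (T_H\cup T_K)\setminus\{1\}$, and the letters $t_i$ alternating strictly between $T_H\setminus\{1\}$ and $T_K\setminus\{1\}$ (length $n=0$ allowed). I would then define an action of $H$ on $W$ by a ``multiply and reduce'' rule: given $h\in H$ and $w=(g,t_1,\ldots,t_n)\in W$, if $t_1\in T_K$ (or $n=0$), write $hg=tg'$ with $t\in T_H$ and $g'\in G$ and set $h\cdot w=(g',t,t_1,\ldots,t_n)$, suppressing $t$ if $t=1$; if $t_1\in T_H$, form $hg\cdot t_1\in H$, write it as $t'g''$, and continue the reduction inductively. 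A symmetric rule defines an action of $K$ on $W$.

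Taking $M$ to be the subgroup of the symmetric group on $W$ generated by the images of $H$ and $K$, the homomorphisms $H\to M$ and $K\to M$ are injective: the empty word $(1_G)\in W$ is sent by $h\in H$ to $(g,t)$ with $h=tg$ (or to $(g)$ when $h=g\in G$), which uniquely recovers $h$, and similarly for $K$. Moreover, $h\in H$ and $k\in K$ act identically on $(1_G)$ only when both lie in $G$ and coincide there, so the images of $H$ and $K$ in $M$ meet exactly in the common copy of $G$ and the conclusion of the theorem follows.

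The main obstacle is verifying that the prescriptions above really define group actions, i.e., that $(h_1h_2)\cdot w=h_1\cdot(h_2\cdot w)$ for all $h_1,h_2\in H$ and $w\in W$; the reduction may cascade when the produced letter collides with existing ones, so the cases must be traced by induction on the length of $w$. An alternative packaging is to first build the free product $H\star K$ as a group of reduced words (a classical fact) and then let $M$ be its quotient by the normal closure of $\{g_H g_K^{-1}:g\in G\}$, so that the permutation action on $W$ serves purely to prove injectivity of the composed maps $H\to H\star K\to M$ and $K\to H\star K\to M$, which is the only remaining content of the theorem.
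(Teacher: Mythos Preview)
The paper does not prove this theorem at all; it merely cites \cite[Chapter~III]{NEUMANN3} and uses the result as a black box in the proof of Proposition~\ref{p.count_inf_alg_c_sgp}. There is therefore nothing in the paper to compare your argument against.

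That said, your proposal is the classical construction of the amalgamated free product $H\star_G K$ together with the normal form theorem, established via the permutation-representation trick (due to van der Waerden and Artin, and exactly the method in the cited source and in standard texts such as Serre's \emph{Trees} or Lyndon--Schupp). The outline is correct: the action on the set $W$ of normal-form words is faithful on the factors, which yields the required injectivity. You have correctly identified the only nontrivial point, namely checking that the ``multiply and reduce'' rule is associative in the sense that $(h_1h_2)\cdot w=h_1\cdot(h_2\cdot w)$; this is a straightforward but case-heavy induction on the length of $w$, and once it is done the rest follows as you say.
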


\begin{defi}\label{d.elementary_subgroup}
In a group $G$ a subgroup $H$ is \textbf{elementary} if for every first-order formula $\varphi$ in the language of group theory with parameters from $H$ we have: $\varphi$ holds in $G$ if and only if $\varphi$ holds in $H$.
\end{defi}

By the Downward Löwenheim-Skolem Theorem (see \cite[Corollary~3.1.5]{HODGES}) we have the following.

\begin{theorem}\label{t.lowenheim_skolem}
If $G$ is a group, then for every cardinality $\aleph_0\leq\kappa\leq |G|$ there is an elementary subgroup of $G$ with cardinality $\kappa$.
\end{theorem}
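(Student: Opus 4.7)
The plan is to formalize $G$ as a first-order structure in the standard language of groups and then appeal directly to the stated Downward Löwenheim-Skolem Theorem. Specifically, I would work in the language $\mathcal{L}_{\mathrm{gp}}=\{\cdot,{}^{-1},1\}$ consisting of a binary multiplication symbol, a unary inverse symbol, and a constant symbol for the identity. With this choice, an $\mathcal{L}_{\mathrm{gp}}$-substructure of $G$ is precisely a subset closed under $\cdot$, closed under ${}^{-1}$, and containing $1$, i.e. a subgroup of $G$.

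Given $\aleph_0\leq\kappa\leq|G|$, I would then invoke \cite[Corollary~3.1.5]{HODGES}, applied (say) to the empty parameter set or to any subset of $G$ of cardinality at most $\kappa$, to obtain an elementary $\mathcal{L}_{\mathrm{gp}}$-substructure $H\preceq G$ with $|H|=\kappa$. By the first paragraph this $H$ is a genuine subgroup of $G$, and by the definition of elementary substructure every $\mathcal{L}_{\mathrm{gp}}$-formula with parameters in $H$ holds in $G$ if and only if it holds in $H$. Since every formula in the language of group theory used in Definition~\ref{d.elementary_subgroup} is an $\mathcal{L}_{\mathrm{gp}}$-formula, $H$ is an elementary subgroup in the sense of that definition.

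There is essentially no serious obstacle here: the result is purely a translation between the model-theoretic formulation of Löwenheim-Skolem and the algebraic terminology of Definition~\ref{d.elementary_subgroup}. The only subtle point worth flagging is the choice of language. Had one taken the sparser language $\{\cdot\}$, an elementary substructure would a priori be only a subsemigroup, and one would need to separately observe (using elementarity applied to the axioms $\forall x\,\exists y\,(x\cdot y=1\wedge y\cdot x=1)$ and $\exists e\,\forall x\,(e\cdot x=x\cdot e=x)$) that $H$ is closed under inverses and contains the identity of $G$. Adopting $\mathcal{L}_{\mathrm{gp}}$ from the outset eliminates this bookkeeping, and the theorem follows immediately.
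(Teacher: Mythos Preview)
Your proposal is correct and matches the paper's approach exactly: the paper does not give a proof at all but simply cites \cite[Corollary~3.1.5]{HODGES}, which is precisely what you invoke. Your write-up is in fact more detailed than the paper's treatment, since you spell out the choice of language and why an elementary $\mathcal{L}_{\mathrm{gp}}$-substructure is automatically a subgroup.
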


\begin{proof}[Proof of Proposition~\ref{p.count_inf_alg_c_sgp}]
Fix an algebraically closed group $H$. By Theorem~\ref{t.lowenheim_skolem} there is a countably infinite elementary subgroup $G$ of $H$. We claim that $G$ is algebraically closed.

Pick any finite system $(E,I)$ of equations and inequations with parameters from $G$ such that $(E,I)$ has a solution in some bigger group $K\geq G$. Clearly, we may choose $K$ so that $G=H\cap K$. By Theorem~\ref{t.amalgamated_free_product} there is a group $M$ such that $H,K\leq M$. Clearly, $(E,I)$ has a solution in $M$ but contains only parameters from $G$. Since $H$ is algebraically closed, $(E,I)$ has a solution in $H$ as well. Moreover, $(E,I)$ has a solution in $G$ because $G$ is an elementary subgroup of $H$, which concludes the proof.
\end{proof}

\begin{defi}\label{d.homogeneous}
A group $G$ is \textbf{homogeneous} if any isomorphism between two finitely generated subgroups of $G$ extends to an automorphism of $G$. If this extension can always be chosen to be an inner automorphism, then $G$ is \textbf{strongly homogeneous}.
\end{defi}

The following lemma is an instance of a well-known fact in model theory (see, for example, \cite[Lemma~7.1.3]{HODGES}). %For understandability, we present a short proof.

\begin{lemma}\label{l.fin_gen_extends}
Let $G$ be a countable group and $H$ be a homogeneous group. Then $G$ can be embedded into $H$ if and only if every finitely generated subgroup of $G$ can be embedded into $H$.
\end{lemma}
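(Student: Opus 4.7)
The forward direction is immediate: if $\iota:G\hookrightarrow H$ is an embedding, then its restriction to any finitely generated subgroup $G_0\leq G$ is again an embedding, so $G_0\hookrightarrow H$.

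For the backward direction, the plan is a straightforward chain construction. Enumerate $G=\{g_1,g_2,\ldots\}$ (the countability of $G$ is essential here) and set $G_n\defeq \langle g_1,\ldots,g_n\rangle$, so that $G_1\leq G_2\leq\cdots$ and $G=\bigcup_{n\in\nat}G_n$. I will recursively build embeddings $\varphi_n:G_n\hookrightarrow H$ such that $\varphi_{n+1}$ extends $\varphi_n$; then $\varphi\defeq\bigcup_{n\in\nat}\varphi_n:G\to H$ will be the desired embedding.

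By hypothesis, each $G_n$ (being finitely generated) admits an embedding into $H$. Start with any embedding $\varphi_1:G_1\hookrightarrow H$. Inductively, suppose $\varphi_n:G_n\hookrightarrow H$ has been constructed, and pick any embedding $\psi:G_{n+1}\hookrightarrow H$ guaranteed by the hypothesis. Then $\psi(G_n)$ and $\varphi_n(G_n)$ are two finitely generated subgroups of $H$, and the map
$$\tau\defeq \varphi_n\circ (\psi|_{G_n})^{-1}:\psi(G_n)\to\varphi_n(G_n)$$
is an isomorphism between them. Invoking homogeneity of $H$, extend $\tau$ to an automorphism $\alpha\in\Aut(H)$, and define $\varphi_{n+1}\defeq \alpha\circ\psi$. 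This is an embedding $G_{n+1}\hookrightarrow H$, and by construction $\varphi_{n+1}|_{G_n}=\alpha\circ\psi|_{G_n}=\tau\circ\psi|_{G_n}=\varphi_n$, as required.

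There is no real obstacle; the only point to verify is that the amalgamated union $\varphi\defeq\bigcup_n\varphi_n$ is a well-defined group homomorphism and injective. Well-definedness and the homomorphism property are automatic from the compatibility $\varphi_{n+1}|_{G_n}=\varphi_n$ and the fact that every finite subset of $G$ lies in some $G_n$. Injectivity follows because each $\varphi_n$ is injective: given $g\in G\setminus\{1_G\}$, pick $n$ with $g\in G_n$, and then $\varphi(g)=\varphi_n(g)\neq 1_H$. This completes the embedding.
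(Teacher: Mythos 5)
Your proof is correct and follows essentially the same route as the paper's: exhaust $G$ by an increasing chain of finitely generated subgroups, embed each one into $H$, and use homogeneity to repair the embedding at each stage so that it extends the previous one. The only difference is cosmetic, namely that you spell out the enumeration $G_n=\langle g_1,\ldots,g_n\rangle$ and verify the union is an embedding in more detail.
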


%\begin{proof}
%$\implies$: This is obvious.

%$\impliedby$: Write $G$ as a union of finitely generated subgroups $\bigcup_{n=1}^\infty G_n$ such that $G_n\subseteq G_{n+1}$ for every $n$. It suffices to find a sequence $i_n:G_n\embeds H$ of embeddings such that $i_{n+1}$ extends $i_n$ for every $n$ because then the common extension $i\defeq\bigcup_{n=1}^\infty i_n$ embeds $G$ into $H$.

%Suppose that $i_n:G_n\embeds H$ is given. By assumption, there exists an embedding $j_{n+1}:G_{n+1}\embeds H$ not necessarily extending $i_n$. However, by homogeneity there is an automorphism $\alpha_{n+1}$ of $H$ that extends the isomorphism $(i_n\circ j_{n+1}^{-1})|_{j_{n+1}(G_n)}$. Therefore $i_{n+1}\defeq\alpha_{n+1}\circ j_{n+1}$ is an embedding of $G_{n+1}$ into $H$ that extends $i_n$.
%\end{proof}

The following proposition is based on the existence of so-called HNN extensions, a celebrated result of Higman, Neumann, and Neumann. For the proof, see \cite[Lemma~1]{MACINTYRE1}.

\begin{prop}[Macintyre]\label{p.alg_closed_strongly_hom}
Algebraically closed groups are strongly homogeneous.
\end{prop}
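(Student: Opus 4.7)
The plan is to reduce the problem to algebraic closedness applied to a one-variable system. Let $\varphi\colon A\to B$ be an isomorphism between finitely generated subgroups of $G$, and pick a finite generating set $a_1,\dots,a_n$ of $A$. Set $b_i\defeq\varphi(a_i)$. The goal is to exhibit an element $g\in G$ such that $g a_i g^{-1}=b_i$ for every $i$: since $a_1,\dots,a_n$ generate $A$ and both $\varphi$ and conjugation by $g$ are homomorphisms on $A$, this equality on generators forces $\varphi(a)=gag^{-1}$ for every $a\in A$, so conjugation by $g$ is an inner automorphism of $G$ extending $\varphi$.

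To produce $g$, I would consider the finite system $E\defeq\{xa_ix^{-1}b_i^{-1}:1\leq i\leq n\}\subseteq F\star G$ in a single variable $x$ (with no inequations). By Definition~\ref{d.alg_closed}, it suffices to show $E$ is consistent with $G$, since algebraic closedness will then deliver a solution $g\in G$, which is exactly the element we want.

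The crux, and the main obstacle, is producing a group $H\geq G$ in which the system $E$ is solvable. For this I would invoke the HNN extension
\[
H\defeq\langle G,t\mid tat^{-1}=\varphi(a)\text{ for all }a\in A\rangle.
\]
Because $\varphi\colon A\to B$ is an isomorphism of subgroups of $G$, the standard theory of HNN extensions (Britton's lemma) guarantees that the natural map $G\to H$ is an embedding, and by construction $t$ satisfies $ta_it^{-1}=b_i$ for every $i$, so $t$ solves $E$ in $H$. Alternatively, if one prefers to rely strictly on the tools cited in the excerpt, one can realize this HNN extension as an iterated application of the amalgamated-product construction provided by Theorem~\ref{t.amalgamated_free_product}: take two copies $G_1,G_2$ of $G$, amalgamate them over $A\leq G_1$ identified with $B\leq G_2$ via $\varphi$, and then amalgamate the result with a further copy of $G$ in the appropriate way to obtain a single element that conjugates $A$ to $B$ as required. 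Either way, the consistency of $E$ is secured.

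Once consistency is in hand, algebraic closedness of $G$ yields some $g\in G$ solving $E$, and conjugation by $g$ is the desired inner automorphism extending $\varphi$. I expect the only delicate point to be the verification that the HNN extension (or its amalgamated-product substitute) really embeds $G$; everything else is formal.
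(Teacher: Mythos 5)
Your main argument is correct and is essentially the paper's proof: both set up the finite system of conjugation equations $x a_i x^{-1} = \varphi(a_i)$ (in one extra variable), certify its consistency with $G$ by passing to the HNN extension (the paper's Theorem~\ref{t.HNN}), and then invoke algebraic closedness to solve the system inside $G$, yielding an inner automorphism extending $\varphi$. The only wobble is your side-remark that the HNN extension could alternatively be ``realized as an iterated application'' of the amalgamated-product existence result (Theorem~\ref{t.amalgamated_free_product}); that reduction is not routine and the paper sensibly cites the HNN theorem directly, but since your primary route via Britton's lemma is sound, this does not affect the proof.
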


%The proof is based on the existence of HNN extensions:

%\begin{theorem}[{Higman--Neumann--Neumann, \cite[Theorem~I]{HNN}}]\label{t.HNN}
%If $G$ is a group and $\alpha: H\to K$ is an isomorphism between two subgroups of $G$, then there is a group $G_\alpha\geq G$ with only one new generator $t$ such that the conjugation by $t$ is an automorphism of $G_\alpha$ extending $\alpha$.
%\end{theorem}

%\begin{proof}[Proof of Proposition~\ref{p.alg_closed_strongly_hom}]
%Let $G$ be an algebraically closed group. Let $H$ and $K$ be finitely generated subgroups of $G$ and $\alpha:H\to K$ be an isomorphism. Let $\{h_1,\dots,h_n\}$ be a generating set for $H$. Consider the following system of equations:
%\begin{equation}\label{e.conjugation}
%    x^{-1}h_1x=\alpha(h_1),\dots,x^{-1}h_nx=\alpha(h_n).
%\end{equation}
%By Theorem~\ref{t.HNN}, the system (\ref{e.conjugation}) is consistent with $G$. (More accurately, the system $E=\{x^{-1}h_1x(\alpha(h_1))^{-1},\ldots,x^{-1}h_nx(\alpha(h_n))^{-1}\}$, $I=\emptyset$ is consistent with $G$.) Then it has a solution $g$ in $G$ because $G$ is algebraically closed. The conjugation by $g$ coincides with $\alpha$ on the generating set $\{h_1,\dots,h_n\}$ of $H$; therefore the conjugation by $g$ is an inner automorphism of $G$ extending $\alpha$.
%\end{proof}

Finally, we cite a very nice result that connects the word problem and algebraically closed groups. It is due to B.~H.~Neumann, H.~Simmons and A.~Macintyre, see \cite[Corollary~3.5]{NEUMANN1}, \cite[Theorem~C]{SIMMONS} and \cite[Corollary~2]{MACINTYRE2}.

\begin{theorem}[Macintyre--Neumann--Simmons]\label{t.alg_c_word_p}
A finitely generated group has solvable word problem if and only if it is embeddable into every algebraically closed group.
\end{theorem}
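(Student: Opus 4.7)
The plan is to prove each implication separately, using the machinery developed above.

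For the forward direction, assume $G=\langle g_1,\dots,g_n\rangle$ is finitely generated with solvable word problem and let $A$ be an arbitrary algebraically closed group. The aim is to locate $a_1,\dots,a_n\in A$ such that $g_i\mapsto a_i$ extends to an embedding. Enumerate the words in $n$ letters algorithmically and, invoking the word problem, split them effectively into relators and non-relators. For each $k$, the finite system $(E_k,I_k)$ comprising the first $k$ relators as equations and the first $k$ non-relators as inequations is consistent with $A$: by Theorem~\ref{t.amalgamated_free_product}, $A$ and $G$ embed in a common overgroup, where the generators of $G$ themselves solve the system. Since $A$ is algebraically closed, each $(E_k,I_k)$ has a solution $\bar a^{(k)}$ in $A$. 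The subtle step is to stitch these finite-stage solutions into a single coherent $n$-tuple; here strong homogeneity (Proposition~\ref{p.alg_closed_strongly_hom}) enters in a back-and-forth fashion, allowing us to conjugate each successive tuple by a suitable inner automorphism so that it agrees with the previous one on ever-larger initial segments of the enumeration. The limit $n$-tuple realizes every relator of $G$ and avoids every non-relator, yielding the desired embedding.

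For the reverse direction, the strategy is to exhibit a single countable algebraically closed group $A^{\star}$ in which every finitely generated subgroup has solvable word problem. Granted such an $A^{\star}$, the hypothesis that $G$ embeds into every algebraically closed group provides in particular an embedding $G\hookrightarrow A^{\star}$, whence $G$, as a finitely generated subgroup of $A^{\star}$, inherits solvable word problem. The group $A^{\star}$ is built as an effective closure analogous to the proof of Scott's theorem (Theorem~\ref{t.emb_alg_closed}): starting from a computable countable base, one repeatedly enumerates the finite systems consistent with the current stage (consistency being certifiable via the amalgamation of Theorem~\ref{t.amalgamated_free_product}) and adjoins solutions through amalgamated free products, all carried out in a uniformly computable fashion. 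Every finitely generated subgroup of $A^{\star}$ is then generated by computable elements of a computable group, and so has solvable word problem.

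The main obstacle in the forward direction is passing from the finite-system realizability that algebraic closure directly supplies to the realization of the full recursive system describing $G$; strong homogeneity is the mechanism that makes the stage-wise solutions cohere. The main obstacle in the reverse direction is performing the construction of $A^{\star}$ with enough effectivity to guarantee simultaneously that it is algebraically closed and that the word problem is uniformly decidable in every finitely generated subgroup.
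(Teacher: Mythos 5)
The paper does not prove Theorem~\ref{t.alg_c_word_p}; it cites it from the literature (Neumann, Simmons, Macintyre) as a known, deep theorem. So there is no in-paper proof to compare against. Judged on its own terms, your attempt has serious gaps in both directions.

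In the forward direction, the stitching step does not work. Strong homogeneity lets you extend an isomorphism between two finitely generated subgroups of $A$, so to conjugate $\bar a^{(k+1)}$ onto something coherent with $\bar a^{(k)}$ you would first need $a_i^{(k)}\mapsto a_i^{(k+1)}$ to \emph{be} an isomorphism $\langle\bar a^{(k)}\rangle\to\langle\bar a^{(k+1)}\rangle$. But a solution of $(E_k,I_k)$ is only constrained to satisfy the first $k$ relators and avoid the first $k$ non-relators; nothing prevents $\bar a^{(k)}$ from satisfying additional, unwanted relations of $G$ that $\bar a^{(k+1)}$ does not, so the subgroups generated by successive tuples need not be isomorphic via the obvious map, and there is no isomorphism for homogeneity to extend. (Also, the phrase ``agrees with the previous one on ever-larger initial segments of the enumeration'' does not parse: each $\bar a^{(k)}$ is a fixed $n$-tuple, not a growing sequence.) The known route to this direction is through the Boone--Higman theorem: a finitely generated $G$ with solvable word problem embeds into a \emph{simple} group $S$ that embeds into a \emph{finitely presented} group $P$. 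The finite presentation of $P$, together with a single inequation $s\neq 1$ for a chosen nontrivial $s\in S$, is one finite system consistent with any $A$; a solution of this single system in $A$ gives a homomorphism $P\to A$ nontrivial on $S$, hence injective on $S$ by simplicity, hence injective on $G$. The whole group is thereby encoded by a single finite system -- exactly the compression your stage-by-stage approach lacks.

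The reverse direction is built on a false premise: there is no algebraically closed group $A^\star$ in which every finitely generated subgroup has solvable word problem. Every finitely presented group corresponds to a single finite system of equations, consistent with every group, and therefore embeds into \emph{every} algebraically closed group; by Novikov--Boone there are finitely presented groups with unsolvable word problem, so every algebraically closed group (in particular your $A^\star$) already contains such a subgroup. Independently, the proposed construction is not effective as described: consistency of a finite system with a given group is not a decidable property, so ``enumerating the finite systems consistent with the current stage'' cannot be carried out uniformly computably, and amalgamated free products do not preserve decidability of the word problem in any simple way. The actual theorem (Macintyre's half) instead shows that for each finitely generated $G$ \emph{without} solvable word problem one can build \emph{some} algebraically closed group omitting $G$, via a delicate diagonalization in the closure construction that exploits the undecidability of $G$'s word problem; no single universal witness $A^\star$ exists or is needed.
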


\begin{remark}\label{r.prop_of_ac}
It is well-known that algebraically closed groups are simple \cite{NEUMANN2} and not finitely generated \cite[Theorem~2.1]{NEUMANN1}. Since they contain free subgroups by Theorem~\ref{t.alg_c_word_p}, they are not locally finite either (recall that a group $G$ is locally finite if every finitely generated subgroup of $G$ is finite).
\end{remark}

\section{Countably infinite discrete groups}\label{s.countably_infinite_discrete_groups}

\subsection{The space of multiplication tables}\label{ss.multip_tables}

By Proposition~\ref{p.prod_polish} the space $\nnn$ of infinite tables of natural numbers is Polish. A clopen basis for $\nnn$ consists of sets of the form
\begin{equation}\label{eq.basis1}
    \left\{A\in\nnn:\ A(n_1,m_1)=k_1,\ldots,A(n_l,m_l)=k_l\right\}
\end{equation}
for $n_i,m_i,k_i\in\nat$, ($i=1,\ldots,l$). We will study the subspace
$$\calg\defeq\left\{A\in\nnn:\ A\text{ is the multiplication table of a group and $1$ is its identity element}\right\}.$$

\begin{defi}
A \textbf{group property} is an isomorphism-invariant subset $\calp\subseteq\calg$. The property $\calp$ is \textbf{generic} if it is a comeager subset of $\calg$.
\end{defi}
We would like to apply the Baire category theorem (Theorem~\ref{t.bct}) in $\calg$. Therefore, by Theorem~\ref{t.G_delta} we need to prove that $\calg$ is $G_\delta$ in $\nnn$. The calculation is quite straightforward, see \cite[Proposition~3.1]{ELEKES}.

\begin{remark}\label{r.space_goldbring}
Consider the space introduced in \cite[Section~3.1]{GOLDBRING}, which is also denoted by $\calg$. To avoid confusion, here we denote it by $\calg'$. There is a natural embedding $\Phi:\calg\embeds\calg'$ that maps $\calg$ onto a clopen subset of $\calg'$: multiplication tables determine inverses and we take $1$ to be the identity element. Moreover, $\calg'$ is the disjoint union of countably infinitely many copies of $\Phi(\calg)$. Therefore, regarding the genericity of group properties, $\calg$ and $\calg'$ are equivalent. Also, it is pointed out in \cite[Proposition~3.1.1]{GOLDBRING} that the subspace $\calg'$ is closed in $\nnn\times\nat^\nat\times\nat$. On the other hand, $\calg$ is not closed in $\nnn$.
\end{remark}

We reserve the notation $\wtilde G$ for the unique group of multiplication table $G$ and underlying set $\nat$.

\begin{remark}
When we consider elements of $\calg$ we use the usual shorthands of group theory. For example, to define the subspace of torsion groups we write
$$\{G\in\calg:\ \forall n\in\nat\ \exists k\in\nat\ n^k=1\}$$
instead of the rather cumbersome
$$\{G\in\calg:\ \forall n\in\nat\ \exists k\in\nat\ \underbrace {G(G(\ldots G(G(n,n),n),\ldots, n),n)}_{k\text{ times}}=1\}.$$
\end{remark}

\begin{remark}\label{r.inverse}
We also use inverses for convenience:

Fix any $n,m,k\in\nat$. Then e.g. $nm^{-1}=k$ abbreviates $\forall x\ (mx=1\implies nx=k)$. Clearly, this defines a closed subset of $\calg$. However, the group axioms imply that it is equivalent to $\exists x\ (mx=1\land nx=k)$, which defines an open set. Thus $nm^{-1}=k$ defines a clopen set.
\end{remark} 

\begin{obs}\label{o.basis1}
The above-mentioned standard clopen basis (\ref{eq.basis1}) of $\nnn$ induces a clopen basis for $\calg$: sets of the form
\begin{equation}\label{e.basis1}
    \{G\in\calg:\ \forall i\leq l\ (n_i\cdot m_i=k_i)\}
\end{equation}
with $l\in\nat$ and $n_i,m_i,k_i\in\nat$ for all $i\leq l$ constitute a basis.
For practical reasons we introduce another clopen basis for $\calg$.
\end{obs}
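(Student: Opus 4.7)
The plan is to recognize that this observation is essentially a restatement of how the subspace topology interacts with a product basis, so the proof will amount to unpacking definitions. First I would recall that $\calg$ carries the subspace topology inherited from $\nnn$; therefore, given any basis for $\nnn$, the collection of its traces on $\calg$ is a basis for $\calg$. Hence, it suffices to show two things: that the sets in (\ref{eq.basis1}) are genuinely clopen in $\nnn$, and that their intersections with $\calg$ are precisely the sets in (\ref{e.basis1}).

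For the first point, each basic set from (\ref{eq.basis1}) is open by the definition of the product topology, and its complement is
\[
\bigcup_{i=1}^{l}\bigcup_{k\neq k_i}\{A\in\nnn:\ A(n_i,m_i)=k\},
\]
which is open; thus these sets are clopen in $\nnn$, and consequently their traces on $\calg$ are clopen in $\calg$. For the second point, the observation that for $G\in\calg$ the entry $G(n_i,m_i)$ is by definition the product $n_i\cdot m_i$ in $\wtilde G$ gives
\[
\{A\in\nnn:\ A(n_i,m_i)=k_i,\ i\leq l\}\cap\calg=\{G\in\calg:\ \forall i\leq l\ (n_i\cdot m_i=k_i)\},
\]
which is exactly (\ref{e.basis1}). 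Combining the two points finishes the proof.

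There is no real obstacle here; the only point that might look like one is verifying clopenness rather than mere openness, but this is immediate once one writes the complement of a coordinate-fixing set as a union over the remaining values of the coordinate. In particular, nothing needs to be said about $\calg$ being $G_\delta$ or about the group axioms, since both the relative-openness and relative-closedness of the basic sets pass through the intersection with $\calg$ automatically.
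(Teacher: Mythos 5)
Your proof is correct and takes the only natural route: since $\calg$ carries the subspace topology from $\nnn$, the traces of the product-topology clopen basis on $\calg$ form a clopen basis, and for $G\in\calg$ the condition $G(n_i,m_i)=k_i$ is literally $n_i\cdot m_i=k_i$ in $\wtilde G$. The paper states this as an Observation with no written proof, and what you wrote is exactly the routine verification it leaves implicit.
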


\begin{prop}\label{p.basis2}
For any finite sets $\{U_1,\dots,U_k\}$ and $\{V_1,\dots,V_l\}$ of words in $n$ variables $x_1,\dots,x_n$ and for any $a_1,\dots,a_n,b_1,\dots,b_k,c_1,\dots,c_l\in\nat$ the set
\begin{equation}
    \left\{G\in\calg:\ \bigwedge_{i=1}^k U_{i}(a_1,\dots,a_n)=b_i\land \bigwedge_{j=1}^l V_{j}(a_1,\dots,a_n)\neq c_j\right\}
\end{equation}
is clopen, and sets of this form constitute a basis for $\calg$.
\end{prop}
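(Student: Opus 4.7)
The plan is to prove the statement in two stages: first show each set described is clopen, and then show every element of the basis (\ref{e.basis1}) is already of the claimed form, which immediately yields the basis property.

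For clopenness, the main tool is a straightforward induction on the structure of a word. Fix $n$ and $a_1,\dots,a_n\in\nat$. For each word $W$ in the variables $x_1,\dots,x_n$, define the evaluation map $\mathrm{ev}_W:\calg\to\nat$ by $\mathrm{ev}_W(G)\defeq W(a_1,\dots,a_n)$ computed in $\wtilde G$, with $\nat$ carrying the discrete topology. I would prove by induction on the complexity of $W$ that $\mathrm{ev}_W$ is continuous. The base case $W=x_i$ gives the constant function $a_i$. For the product step $W=W_1W_2$, we have $\mathrm{ev}_W(G)=G\bigl(\mathrm{ev}_{W_1}(G),\mathrm{ev}_{W_2}(G)\bigr)$, and continuity follows from the inductive hypothesis together with the fact that every entry of the multiplication table is a continuous function of $G\in\calg$ (immediate from the basis (\ref{e.basis1})). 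For the inverse step $W=W_1^{-1}$, combine the inductive hypothesis with Remark~\ref{r.inverse}, which says that the condition $\mathrm{ev}_{W_1}(G)^{-1}=c$ is clopen for every $c\in\nat$; this shows the preimage of every singleton under $\mathrm{ev}_W$ is clopen, hence $\mathrm{ev}_W$ is continuous. Once each $\mathrm{ev}_{U_i}$ and $\mathrm{ev}_{V_j}$ is continuous, the sets $\{G:U_i(a_1,\dots,a_n)=b_i\}$ and $\{G:V_j(a_1,\dots,a_n)\neq c_j\}$ are clopen as preimages of singletons under continuous maps into a discrete space, and the desired set is clopen as a finite intersection.

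For the basis property, observe that a basic open set of the form (\ref{e.basis1}) is already a special case of the form in the proposition: the single constraint $n_i\cdot m_i=k_i$ is precisely $U(n_i,m_i)=k_i$ for the word $U(x_1,x_2)\defeq x_1x_2$, so any set of the form (\ref{e.basis1}) is itself of the new form (with no inequations). By Observation~\ref{o.basis1} these sets already constitute a basis; combined with the first part (which shows the new sets are open), the collection of sets described in the proposition generates exactly the subspace topology of $\calg$ and is itself a basis.

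No step seems to present a real obstacle; the only point requiring a bit of care is the mixed induction handling both multiplication and inversion, but invoking Remark~\ref{r.inverse} at the inverse step and the continuity of the multiplication table entries at the product step make the argument completely mechanical.
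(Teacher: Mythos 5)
Your proof is correct and follows essentially the same strategy as the paper: induction on the structure of the word, with the product step handled via continuity of the multiplication table entries and the inverse step via Remark~\ref{r.inverse}, and the basis property established by noting that the new family extends the existing clopen basis (\ref{e.basis1}). The paper's version is terser (it reduces to a single word and a single equation by the algebra of clopen sets before the induction), but the underlying argument is the same.
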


\begin{proof}
This family extends (\ref{e.basis1}), so it suffices to show that its elements are clopen. Since clopen sets form an algebra, it suffices to consider only one word $W$ and only the case of equation. The proof is a straightforward induction on the length of $W$ and uses the same argument as Remark~\ref{r.inverse}.
\end{proof}

\begin{obs}\label{o.induced_homeo}
An important observation is that permutations induce homeomorphisms. Let $\varphi:\nat\to\nat$ be a bijection that fixes 1. Then the \emph{induced homeomorphism} $h_\varphi:\calg\to\calg$ is defined as follows. Intuitively, we define $h_\varphi(G)$ by pushing forward the structure of $G$ via $\varphi$. More precisely, for any  $G\in\calg$ the multiplication table $h_\varphi(G)$ is defined by the equations $i\cdot j\defeq\varphi(G(\varphi^{-1}(i),\varphi^{-1}(j)))$ for all $i,j\in\nat$. Thus $\varphi$ is an isomorphism between $\wtilde G$ and $\wtilde{h_\varphi(G)}$. It is an easy exercise to verify that $h_\varphi$ is indeed a homeomorphism.
\end{obs}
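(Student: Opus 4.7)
The plan is to verify that $h_\varphi$ is (a) well-defined as a map $\calg\to\calg$, (b) a bijection with inverse $h_{\varphi^{-1}}$, and (c) continuous; the homeomorphism statement will then follow from (b) and (c) applied to $\varphi^{-1}$ in place of $\varphi$. The strategy exploits the fact that $h_\varphi$ is the obvious transport of structure along $\varphi$, so everything should reduce to unwinding the definitions.

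First I would check that $h_\varphi(G)\in\calg$ for every $G\in\calg$. The defining equation $i\cdot j\defeq\varphi(G(\varphi^{-1}(i),\varphi^{-1}(j)))$ is exactly the pushforward of the operation of $\wtilde G$ along $\varphi$, so by construction $\varphi:\wtilde G\to\wtilde{h_\varphi(G)}$ is a group isomorphism. The group axioms transfer and, since $\varphi$ fixes $1$, the identity element of $\wtilde{h_\varphi(G)}$ is $\varphi(1_{\wtilde G})=1$, placing $h_\varphi(G)$ in $\calg$. A direct computation then yields $h_\varphi\circ h_\psi=h_{\varphi\circ\psi}$ and $h_{\mathrm{id}}=\mathrm{id}_{\calg}$, so that $h_\varphi$ is a bijection with inverse $h_{\varphi^{-1}}$, giving (b).

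For continuity I would use the standard clopen basis from Observation~\ref{o.basis1}. For any basic open set
\[
U=\{G\in\calg:\ n_1\cdot m_1=k_1,\ldots,n_l\cdot m_l=k_l\},
\]
unwinding the definition of $h_\varphi$ gives
\[
h_\varphi^{-1}(U)=\{G\in\calg:\ \varphi^{-1}(n_1)\cdot\varphi^{-1}(m_1)=\varphi^{-1}(k_1),\ldots,\varphi^{-1}(n_l)\cdot\varphi^{-1}(m_l)=\varphi^{-1}(k_l)\},
\]
which is again a basic open set of the same form. Hence $h_\varphi$ is continuous, and the same argument with $h_{\varphi^{-1}}$ in place of $h_\varphi$ shows that its inverse is continuous as well.

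I do not anticipate a real obstacle; the only easy mistake is confusing $\varphi$ with $\varphi^{-1}$ when verifying (a) or pulling back basic sets in (c), so I would be careful to write the product $i\cdot j$ in $\wtilde{h_\varphi(G)}$ explicitly as $\varphi(G(\varphi^{-1}(i),\varphi^{-1}(j)))$ at each step in order to avoid a directional error.
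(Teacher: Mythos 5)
Your verification is correct, and it is exactly the routine argument the paper has in mind when it calls this "an easy exercise": the paper itself does not write out a proof. Your check that $h_\varphi\circ h_\psi=h_{\varphi\circ\psi}$, that $\varphi$ fixing $1$ guarantees $h_\varphi(G)\in\calg$, and that preimages of basic clopen sets of the form \eqref{eq.basis1} are again basic clopen (after replacing each $n_i,m_i,k_i$ by $\varphi^{-1}(n_i),\varphi^{-1}(m_i),\varphi^{-1}(k_i)$) are all accurate, and together they give the homeomorphism.
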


\subsection{Subgroups}

As we have already mentioned in Subsection~\ref{ss.algebra}, the following nice and highly nontrivial result is due to B.~H.~Neumann, H.~Simmons and A.~Macintyre, see \cite[Corollary~3.5]{NEUMANN1}, \cite[Theorem~C]{SIMMONS} and \cite[Corollary~2]{MACINTYRE2}.

\begin{thm:alg_c_word_p}[Macintyre--Neumann--Simmons]
A finitely generated group has solvable word problem if and only if it is embeddable into every algebraically closed group.
\end{thm:alg_c_word_p}

From Theorem~\ref{t.alg_c_word_p} it is not hard to prove the following corollary. Here we omit its proof because it follows from Theorem~\ref{t.embed}, which we prove in this subsection.

\begin{cor}
The following are equivalent:

(1) The group $H$ can be embedded into every algebraically closed group.

(2) The group $H$ is countable and every finitely generated subgroup of $H$ has solvable word problem.
\end{cor}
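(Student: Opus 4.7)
The plan is to show each direction by stitching together the three tools developed just above: Theorem~\ref{t.alg_c_word_p} (which handles the finitely generated case), Proposition~\ref{p.alg_closed_strongly_hom} together with Lemma~\ref{l.fin_gen_extends} (which promotes embeddings of finitely generated subgroups to embeddings of the whole countable group), and Proposition~\ref{p.count_inf_alg_c_sgp} (which produces a countable algebraically closed group, forcing countability of $H$).

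For the direction $(2)\Rightarrow(1)$, I would fix an arbitrary algebraically closed group $K$ and aim to embed $H$ into it. By Theorem~\ref{t.alg_c_word_p}, every finitely generated subgroup of $H$ embeds into $K$, since by hypothesis each such subgroup has solvable word problem. By Proposition~\ref{p.alg_closed_strongly_hom}, $K$ is strongly homogeneous and in particular homogeneous, so Lemma~\ref{l.fin_gen_extends} applies to the countable group $H$ and yields an embedding $H\embeds K$.

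For the direction $(1)\Rightarrow(2)$, I would first establish countability: by Proposition~\ref{p.count_inf_alg_c_sgp} there exists a countably infinite algebraically closed group $K_0$, and since by assumption $H$ embeds into $K_0$, we have $|H|\leq\aleph_0$. Next, for any finitely generated subgroup $F\leq H$, composing the inclusion $F\hookrightarrow H$ with an embedding of $H$ into an arbitrary algebraically closed group shows that $F$ embeds into every algebraically closed group; hence Theorem~\ref{t.alg_c_word_p} gives that $F$ has solvable word problem.

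I do not expect any genuine obstacle here: both directions are short applications of results already in hand. The only subtle point is the use of homogeneity in the $(2)\Rightarrow(1)$ direction, where one must remember that a single finitely generated subgroup may embed into $K$ in many incompatible ways, so a naive chain of embeddings need not be coherent. This is precisely why Lemma~\ref{l.fin_gen_extends} (and through it, Proposition~\ref{p.alg_closed_strongly_hom}) is needed: homogeneity lets us adjust each successive embedding by an automorphism of $K$ so that it extends the previous one, and their union then embeds $H$ into $K$.
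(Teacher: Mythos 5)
Your proof is correct and matches the paper's own approach: the paper derives this corollary as the $(2)\Leftrightarrow(3)$ part of Theorem~\ref{t.embed}, and the proof there uses exactly the same ingredients (Theorem~\ref{t.alg_c_word_p}, Proposition~\ref{p.alg_closed_strongly_hom}, and Lemma~\ref{l.fin_gen_extends} for one direction, and Theorem~\ref{t.alg_c_word_p} plus the existence of a countable algebraically closed group for the other). The only cosmetic difference is that the paper invokes Scott's Theorem~\ref{t.emb_alg_closed} directly to produce a countable algebraically closed group, while you detour through Proposition~\ref{p.count_inf_alg_c_sgp}, which itself presupposes the existence furnished by Scott's theorem.
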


Note that these results connect algebraic closedness, which is a purely algebraic notion, to computability theory. One of the main goals of this section is to prove that the purely topological property introduced in the following definition is equivalent to (1) and (2). We find this phenomenon noteworthy since it connects three fields of mathematics.

\begin{defi}\label{d.gen_emb}
For a group $H$ let us define
$$\cale_H\defeq\left\{G\in\calg:\ H \text{ can be embedded into } \wtilde G\right\}.$$
We say that the group $H$ is \textbf{generically embeddable} if $\cale_H$ is comeager.
\end{defi}

To prove the above-mentioned equivalence (Theorem~\ref{t.embed}) we need several results, many of which are interesting in their own.

\begin{notation}\label{n.supp}
If $\calb$ is a basic clopen set of the form
$\{G\in\calg:\ \forall i,j\leq k\ (i\cdot j=m_{i,j})\}$ with $k\in\nat$ and $m_{i,j}\in\nat$ for all $i,j\leq k$, then let $\supp\calb\defeq\{1,\dots,k\}\cup\{m_{i,j}:\ i,j\leq k\}$.
\end{notation}

The following theorem is a special case of \cite[Lemma~5.2.7]{GOLDBRING} (recall Remark~\ref{r.space_goldbring}). For the sake of completeness and understandability we present our own proof.

\begin{theorem}[Goldbring--Kunnawalkam Elayavalli--Lodha]\label{t.alg_closed}
The set
$$\calc\defeq\{G\in\calg:\ \wtilde G\text{ is algebraically closed}\}$$
is comeager in $\calg$.
\end{theorem}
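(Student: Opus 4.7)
The plan is to express $\calc$ as a countable intersection of comeager sets, one per finite system of equations and inequations. For each finite system $S = (E, I)$ in variables $x_1,\ldots,x_n$ with parameters from $\nat$, set
$$\cald_S \defeq \{G \in \calg : S \text{ is consistent with } \wtilde G \implies S \text{ has a solution in } \wtilde G\}.$$
Only countably many such systems $S$ exist (each is coded by a finite amount of data from $\nat$), and $\calc = \bigcap_S \cald_S$, so it suffices to prove that each $\cald_S$ is comeager.

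Fix $S$ and introduce
$$\calo_S \defeq \{G \in \calg : S \text{ has a solution in } \wtilde G\}, \qquad \cals_S \defeq \{G \in \calg : S \text{ is inconsistent with } \wtilde G\},$$
so that $\calo_S \cup \cals_S \subseteq \cald_S$. For every tuple $(a_1,\ldots,a_n) \in \nat^n$, the condition that $(a_1,\ldots,a_n)$ solves $S$ in $\wtilde G$ is a conjunction of finitely many word equations and inequations in parameters from $\nat$, so by Proposition~\ref{p.basis2} the associated set is clopen. Hence $\calo_S$ is open, and so is $\calv_S \defeq \calo_S \cup \mathrm{int}(\cals_S) \subseteq \cald_S$. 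It therefore suffices to show that $\calv_S$ is dense in $\calg$.

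To prove density, let $\calb$ be a nonempty basic clopen set. If $S$ is inconsistent with $\wtilde G$ for every $G \in \calb$, then $\calb \subseteq \cals_S$, and since $\calb$ is open we get $\calb \subseteq \mathrm{int}(\cals_S) \subseteq \calv_S$. Otherwise, pick some $G \in \calb$ together with an extension $M \geq \wtilde G$ witnessing consistency and containing a solution $(m_1,\ldots,m_n) \in M^n$ of $S$. Let $H$ be the subgroup of $M$ generated by $\supp\calb$, the parameters of $S$, and $m_1,\ldots,m_n$; embed $H$ into a countably infinite group $K$ (take $K = H$ if $H$ is already infinite); and choose a bijection $\phi \colon \nat \to K$ such that $\phi(1) = 1_K$, $\phi(i) = i$ for every $i$ in $\supp\calb$ together with the parameters of $S$, and $\phi(q_j) = m_j$ for freshly chosen distinct $q_1,\ldots,q_n \in \nat$. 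Pulling the multiplication of $K$ back through $\phi$ yields a $G' \in \calg$ whose multiplication table agrees with that of $G$ on $\supp\calb$, so $G' \in \calb$, and in which $(q_1,\ldots,q_n)$ solves $S$, so $G' \in \calo_S \cap \calb \subseteq \calv_S \cap \calb$.

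The principal technical point is the perturbation just described: one must realize an abstract solution living in some extension of $\wtilde G$ as a concrete solution by natural numbers in a nearby multiplication table, while keeping the parameters of $S$ fixed as the specified elements of $\nat$. The observation making this possible is that $\calb$ constrains the multiplication only on the finite set $\supp\calb$, leaving one free to re-label the rest of any countable group realizing the solution. Granted the perturbation, the remainder is routine: $\calv_S$ is open and dense, whence $\cald_S$ is comeager, and a countable intersection over the systems $S$ gives that $\calc$ is comeager in $\calg$.
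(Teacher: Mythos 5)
Your proof is correct and follows essentially the same route as the paper: both decompose $\calc$ as a countable intersection over finite systems, observe that the ``has a solution'' set is clopen via Proposition~\ref{p.basis2}, and use the same relabeling argument (push a countable group containing a solution onto $\nat$ while fixing $\supp\calb$ and the parameters of $S$) to show density. The differences are cosmetic: the paper presents the relabeling as a proof by contradiction in its Case~2 rather than as a direct perturbation, and in your construction you should set $q_j\defeq\phi^{-1}(m_j)$ rather than insist the $q_j$ be ``freshly chosen distinct,'' since the $m_j$ may repeat or already lie in $\supp\calb$ or among the parameters.
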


\begin{proof}
Let $F$ be the free group generated by $X=\{x_1,x_2\dots\}$. Let $G\in\calg$. By definition, $\wtilde G$ is algebraically closed if and only if the following holds. For every finite system $(E,I)$ of equations and inequations with elements from $F\star\wtilde G$ either $(E,I)$ is inconsistent with $\wtilde G$ or it has a solution in $\wtilde G$ (see Subsection~\ref{ss.algebra} for the definitions). Note that elements of $F\star \wtilde G$ are words with letters from $X$ and $\nat$. Clearly, there are countably many such words. Thus it suffices to prove that for any finite system $(E,I)$ the set
$$\calc(E,I)\defeq\{G\in\calg:\ (E,I)\text{ is inconsistent with } \wtilde G\text{ or $(E,I)$ has a solution in } \wtilde G\}$$
contains a dense open set. Fix
$$E=\{U_1(x_1,\dots,x_n),\dots,U_k(x_1,\dots,x_n)\}\quad \text{and}\quad I=\{V_1(x_1,\dots,x_n),\dots,V_l(x_1,\dots,x_n)\}$$
\noindent where $x_1,\dots,x_n$ are the variables occurring in elements of $E$ and $I$.

Fix a nonempty basic clopen set $\calb=\{G\in\calg:\ \forall i,j\leq k\ (i\cdot j=m_{i,j})\}$ with $k\in\nat$ and $m_{i,j}\in\nat$ for all $i,j\leq k$. We need to show that $\calb\cap\calc(E,I)$ contains a nonempty open set. There are two cases.

\textbf{Case 1.} There exists $H\in\calb$ such that $(E,I)$ has a solution in $\wtilde H$. That is, for some natural numbers $a_1,\dots,a_n$ we have $\bigwedge_{i=1}^k U_i(a_1,\dots,a_n)=1$ and $\bigwedge_{j=1}^l V_j(a_1,\dots,a_n)\neq 1$ in $\wtilde H$. Then
$$\calu\defeq\left\{G\in\calg:\ \bigwedge_{i=1}^k U_i(a_1,\dots,a_n)=1\land \bigwedge_{j=1}^l V_j(a_1,\dots,a_n)\neq 1\right\}$$
is clopen in $\calg$ by Proposition~\ref{p.basis2}. Now $\calu\subseteq\calc(E,I)$ and $H\in\calu\cap\calb$, hence $\calu\cap\calb$ is a nonempty open subset of $\calb\cap\calc(E,I)$, which completes Case 1.

\textbf{Case 2.} For every $G\in\calb$ the finite system $(E,I)$ is unsolvable in $\wtilde G$. It suffices to prove that for every $G\in\calb$ the finite system $(E,I)$ is inconsistent with $\wtilde G$.

Suppose that there is some $H\in\calb$ and a group $L\geq \wtilde H$ such that $(E,I)$ has a solution $a_1,\dots,a_n$ in $L$. Clearly, we may assume that $L$ is countable since otherwise we could replace it by one of its countably generated subgroups. Let $M$ be the set of natural numbers occurring in elements of $E$ or $I$. We choose any bijection $\varphi:L\to \nat$ that extends the identity of the finite set $M\cup(\supp\calb)\cup\{1\}$ (recall Notation~\ref{n.supp}). We define the multiplication table $K$ on $\nat$ by pushing forward the structure of $L$ via $\varphi$. Then $\wtilde K\in\calb$ and $\varphi(a_1),\dots,\varphi(a_n)$ is a solution of $(E,I)$ in $\wtilde K$, a contradiction.
\end{proof}

\begin{cor}\label{c.generic_simple}
The generic $G\in\calg$ is simple, not finitely generated and not locally finite.
\end{cor}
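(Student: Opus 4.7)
The corollary is essentially immediate from what has already been assembled, so the plan is simply to chain two prior results and note that no extra topological work is required.

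The plan is to invoke Theorem~\ref{t.alg_closed}, which states that the set $\calc \defeq \{G \in \calg : \widetilde{G} \text{ is algebraically closed}\}$ is comeager in $\calg$. By Remark~\ref{r.prop_of_ac}, every algebraically closed group is simple, not finitely generated, and not locally finite (the first two facts being due to B.~H.~Neumann, and the third following from the existence of free subgroups guaranteed by Theorem~\ref{t.alg_c_word_p}). Consequently, for every $G \in \calc$ the group $\widetilde{G}$ enjoys all three properties simultaneously, so
\[
\calc \subseteq \{G \in \calg : \widetilde{G}\text{ is simple}\} \cap \{G \in \calg : \widetilde{G}\text{ is not finitely generated}\} \cap \{G \in \calg : \widetilde{G}\text{ is not locally finite}\}.
\]
Since a superset of a comeager set is comeager, each of the three sets on the right is comeager, and the intersection of finitely (in fact countably) many comeager sets in a Baire space is comeager by the Baire Category Theorem.

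There is essentially no obstacle here: all the real work has been done in Theorem~\ref{t.alg_closed} and in the classical algebraic results cited in Remark~\ref{r.prop_of_ac}. The only point worth checking is that the three listed properties are genuinely implied by algebraic closedness, which is exactly the content of that remark. Thus the proof amounts to two sentences citing the theorem and the remark.
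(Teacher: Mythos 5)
Your proof is correct and is exactly the paper's argument: cite Theorem~\ref{t.alg_closed} for comeagerness of the algebraically closed groups and Remark~\ref{r.prop_of_ac} for the three algebraic consequences. The paper states this in one line; your elaboration is accurate but adds nothing essential.
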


\begin{proof}
This follows from Remark~\ref{r.prop_of_ac} and Theorem~\ref{t.alg_closed}.
\end{proof}

Recall from logic that an existential sentence is a closed formula that starts with a string of existential quantifiers followed by only a quantifier-free formula.

\begin{remark}\label{r.ec_ac}
It is well-known \cite[page 47]{HODGES_1985} that a group $G$ is algebraically closed if and only if the following holds: for every group $H\geq G$ and existential formula $\varphi$ with parameters from $G$, we have that whenever $\varphi$ holds in $H$, it holds in $G$ as well.
\end{remark}

\begin{lemma}\label{l.sol_in_all_alg_c}
Let $\Phi$ be an existential sentence in the first-order language of group theory. If $\Phi$ holds in some group, then $\Phi$ holds in every algebraically closed group.
\end{lemma}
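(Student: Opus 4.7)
The plan is to reduce to the definition of algebraic closedness via normal form plus an amalgamation argument. Write $\Phi$ as $\exists x_1\cdots \exists x_n\, \psi(x_1,\dots,x_n)$ with $\psi$ quantifier-free. In the first-order language of group theory, every atomic formula is an equation between group words, so after putting $\psi$ into disjunctive normal form it becomes a disjunction of conjunctions, each of which is a finite system $(E_r,I_r)$ of equations and inequations in the variables $x_1,\dots,x_n$ (with no parameters, since $\Phi$ is a sentence).

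Suppose $\Phi$ holds in some group $H$. Then there exist $a_1,\dots,a_n\in H$ and some index $r$ such that $(E_r,I_r)$ is solved by $(a_1,\dots,a_n)$ in $H$. Fix this $(E,I)\defeq (E_r,I_r)$; it suffices to prove that $(E,I)$ has a solution in an arbitrary algebraically closed group $G$, since any such solution witnesses the corresponding disjunct of $\psi$ and hence $\Phi$ in $G$.

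Now I would apply Theorem~\ref{t.amalgamated_free_product} to $G$ and $H$ with the trivial group as their common subgroup: this produces a group $M$ containing both $G$ and $H$ as subgroups. The tuple $(a_1,\dots,a_n)\in H\subseteq M$ is still a solution of $(E,I)$ in $M$ (here it is crucial that $(E,I)$ has no parameters from $G$ or $H$, so the embedding $H\embeds M$ sends the solution to a solution). Consequently $(E,I)$ is consistent with $G$ in the sense of Subsection~\ref{ss.algebra}. Since $G$ is algebraically closed, $(E,I)$ has a solution in $G$, as required.

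The only delicate point is the DNF reduction together with bookkeeping of variables: one must observe that a quantifier-free formula in the signature of groups involves only atomic formulas of the form (word) $=$ (word), and that negations convert equations to inequations, yielding precisely the shape of system handled by Definition~\ref{d.alg_closed}. The rest is essentially an immediate application of amalgamation and the definition of algebraic closedness.
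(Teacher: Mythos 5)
Your proof is correct. Both you and the paper follow the same underlying strategy: produce a supergroup $M\geq G$ in which $\Phi$ holds, then unpack the quantifier-free matrix of $\Phi$ into a disjunction of systems of equations and inequations to conclude via the definition of algebraic closedness. The only substantive difference is how the supergroup is obtained. You invoke Theorem~\ref{t.amalgamated_free_product} with the trivial group as the common subgroup (effectively the free product $G*H$), whereas the paper simply takes the direct product $G\times H$, observes that $\{1\}\times H\cong H$ satisfies $\Phi$, and that existential sentences persist upward. The direct-product route is lighter — it avoids appealing to the Higman--Neumann--Neumann amalgamation theorem and needs only the trivial fact that $H\hookrightarrow G\times H$ — but both deliver exactly the same thing: a group containing $G$ where the witnessing tuple lives. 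Your write-up is also more explicit than the paper about the DNF reduction and the observation that atomic formulas in the group signature are word equations; the paper compresses all of this into the phrase ``by the definition of algebraically closed groups.'' Making that bookkeeping explicit, as you do, is a genuine expository improvement, and your point that the system has no parameters (so the witness transports along any embedding) is exactly the detail that makes the argument go through.
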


\begin{proof}
Let $H$ be a group such that $\Phi$ holds in $H$. By the definition of algebraically closed groups and Remark~\ref{r.ec_ac}, it suffices to show that for any algebraically closed group $G$ the sentence $\Phi$ holds in $G\times H$. Since $\{1\}\times H$ and $H$ are isomorphic, $\Phi$ holds in $\{1\}\times H$. Hence $\Phi$ holds in $G\times H$ because $\Phi$ is existential.
\end{proof}

\begin{theorem}\label{t.alg_isom_dense}
The isomorphism class of any countably infinite algebraically closed group is dense in $\calg$.
\end{theorem}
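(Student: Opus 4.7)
The plan is to reduce density of the isomorphism class to Lemma~\ref{l.sol_in_all_alg_c} by packaging each basic clopen neighborhood as a single existential sentence. Fix a countably infinite algebraically closed group $A$ and a nonempty basic clopen set
\[
\calb=\{G\in\calg:\ i\cdot j=m_{i,j}\text{ for all }i,j\leq k\},
\]
and write $\supp\calb=\{v_1,\ldots,v_N\}$ with $v_1=1$. For every pair $(i,j)$ with $i,j\leq k$ let $\sigma(i,j)$ denote the unique index with $v_{\sigma(i,j)}=m_{i,j}$. Since $\calb\neq\emptyset$, pick any $H\in\calb$; this $H$ witnesses that the data $(v_i,\sigma)$ is realizable inside a group.

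Next I would consider the existential first-order sentence $\Phi$ in the language of groups asserting the existence of pairwise distinct elements $y_2,\ldots,y_N$, all different from $1$, such that $y_i\cdot y_j=y_{\sigma(i,j)}$ holds for all $i,j\leq k$, where $y_1$ is interpreted as the constant $1$. Plugging in $y_i:=v_i$ shows that $\Phi$ holds in $\wtilde H$, so by Lemma~\ref{l.sol_in_all_alg_c} the sentence $\Phi$ also holds in $A$. Fix witnesses $a_2,\ldots,a_N\in A$ and set $a_1:=1_A$; the $a_i$'s are then pairwise distinct and satisfy the same relations as the $v_i$'s.

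Finally, since $A$ is countably infinite, I would extend the injection $a_i\mapsto v_i$ ($i=1,\ldots,N$) to a bijection $\varphi:A\to\nat$ and push forward the multiplication of $A$ to define $G\in\nnn$ by
\[
G(i,j):=\varphi\bigl(\varphi^{-1}(i)\cdot_A\varphi^{-1}(j)\bigr)
\]
(in the spirit of Observation~\ref{o.induced_homeo}). Then $\varphi$ is an isomorphism $A\to\wtilde G$, so in particular $G\in\calg$ and $\wtilde G\cong A$, while the constraint $\varphi(a_i)=v_i$ guarantees $G\in\calb$. As $\calb$ was arbitrary, the isomorphism class of $A$ is dense in $\calg$.

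I do not anticipate a real obstacle. The only thing to notice is that a basic clopen neighborhood imposes finitely many equations on finitely many specified elements, together with the implicit distinctness of those elements, and this translates word-for-word into an existential sentence; once that observation is made, Lemma~\ref{l.sol_in_all_alg_c} does all the remaining work.
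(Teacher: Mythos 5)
Your proof is correct and takes essentially the same route as the paper's: encode the basic clopen set $\calb$ as a single existential sentence, transfer it to the algebraically closed group $A$ via Lemma~\ref{l.sol_in_all_alg_c}, and push the resulting witnesses onto $\nat$ by a bijection to land a copy of $A$ inside $\calb$. The only nitpick is that your constraint $y_i\cdot y_j=y_{\sigma(i,j)}$ (for $i,j\leq k$) together with the witnessing substitution $y_i:=v_i$ tacitly requires the enumeration $\supp\calb=\{v_1,\ldots,v_N\}$ to satisfy $v_i=i$ for all $i\leq k$, not merely $v_1=1$; since $\{1,\ldots,k\}\subseteq\supp\calb$ this is a harmless convention, but it should be stated (the paper avoids the issue by using separate variable families $x_i$ and $x_{i,j}$ with explicit equality/inequality formulas $\varphi_{i,j,l},\varphi_{i,j,r,s}$, which your ``pairwise distinct $y$'s'' clause packages more economically).
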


\begin{proof}
Fix a nonempty basic clopen $\calb=\{G\in\calg:\ \forall i,j\leq k\ (i\cdot j=m_{i,j})\}$ with $k\in\nat$ and $m_{i,j}\in\nat$ for all $i,j\leq k$. Fix any multiplication table $A\in\calg$ with $\wtilde A$ algebraically closed. Pick any $H\in\calb$. We associate variables $x_i$ to $i$ for each $i\leq k$ and $x_{i,j}$ to $(i,j)$ for each $i,j\leq k$. For each $i,j,l\leq k$ let
$$
\varphi_{i,j,l}=
\begin{cases}
x_{i,j}=x_l\qquad\text {if}\quad m_{i,j}=l, \\
x_{i,j}\neq x_l\qquad\text{if}\quad m_{i,j}\neq l,
\end{cases}
$$
and for each $i,j,r,s\leq k$ let
$$
\varphi_{i,j,r,s}=
\begin{cases}
x_{i,j}=x_{r,s}\qquad\text {if}\quad m_{i,j}=m_{r,s}, \\
x_{i,j}\neq x_{r,s}\qquad\text{if}\quad m_{i,j}\neq m_{r,s}.
\end{cases}
$$
Now let $\Phi$ denote the following formula:
\begin{equation*}
\begin{array}{cc}
\exists x_1,\ldots,x_k,x_{1,1},\ldots,x_{1,k},x_{2,1},\ldots,x_{2,k},\ldots,x_{k,1},\ldots,x_{k,k}  \\
\ds\left(\left(\bigwedge_{i,j\leq k} x_i\cdot x_j=x_{i,j}\right)\land\left(\bigwedge_{\substack{i,j\leq k\\ i\neq j}}x_i\neq x_j\right)\land\left(\bigwedge_{i,j,l\leq k}\varphi_{i,j,l}\right)\land\left(\bigwedge_{i,j,r,s\leq k}\varphi_{i,j,r,s}\right)\right).
\end{array}
\end{equation*}

Clearly, $\Phi$ holds in $\wtilde H$. Thus $\Phi$ holds in $\wtilde A$ as well by Lemma~\ref{l.sol_in_all_alg_c}. That is, there are numbers $a_i, n_{i,j}\in \nat$ such that the equations $a_i\cdot a_j=n_{i,j}$ hold in $\wtilde A$ for each $i,j\leq k$ and two of them equal if and only if the corresponding elements are equal in $H$. Note that $a_1=1$ since both $a_1\cdot a_1=n_{1,1}$ and $n_{1,1}=a_1$ hold. Let $\alpha:\nat\to\nat$ be a bijection that extends the finite map $a_i\mapsto i$, $n_{i,j}\mapsto m_{i,j}$ for each $i,j\leq k$. Then $h_\alpha(A)\in\calb$ and $\wtilde{h_\alpha(A)}$ is isomorphic to $\wtilde A$ (recall from Observation~\ref{o.induced_homeo} that $h_\alpha$ is the induced homeomorphism). 
\end{proof}

\begin{lemma}\label{l.F_sigma}
For any finitely generated group $H$ the set $\cale_H$ is $F_\sigma$.
\end{lemma}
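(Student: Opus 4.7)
The plan is to parametrize potential embeddings of $H$ into $\wtilde G$ by tuples of natural numbers and express $\cale_H$ as a countable union of closed sets. Fix a finite generating set $\{h_1,\dots,h_n\}$ for $H$. Any group homomorphism from $H$ into $\wtilde G$ is uniquely determined by the images of the generators; thus an embedding of $H$ into $\wtilde G$ corresponds exactly to a tuple $(a_1,\dots,a_n)\in\nat^n$ such that for every word $w$ in $n$ variables,
$$w(h_1,\dots,h_n)=1_H\iff w(a_1,\dots,a_n)=1\text{ in }\wtilde G.$$
The forward implication guarantees that $h_i\mapsto a_i$ extends to a homomorphism, and the reverse implication guarantees injectivity.

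Consequently I would write
$$\cale_H=\bigcup_{(a_1,\dots,a_n)\in\nat^n}\bigcap_{w}\calq_w(a_1,\dots,a_n),$$
where the inner intersection ranges over all words $w$ in $n$ variables, and
$$\calq_w(a_1,\dots,a_n)\defeq\begin{cases}\{G\in\calg:\ w(a_1,\dots,a_n)=1\}&\text{if }w(h_1,\dots,h_n)=1_H,\\ \{G\in\calg:\ w(a_1,\dots,a_n)\neq 1\}&\text{if }w(h_1,\dots,h_n)\neq 1_H.\end{cases}$$

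Next I would invoke Proposition~\ref{p.basis2} (together with Remark~\ref{r.inverse} to handle inverses appearing in $w$): each set $\calq_w(a_1,\dots,a_n)$ is clopen in $\calg$. Since there are only countably many words $w$, for each fixed tuple $(a_1,\dots,a_n)$ the intersection $\bigcap_w\calq_w(a_1,\dots,a_n)$ is a countable intersection of clopen sets, hence closed in $\calg$. Finally, as $\nat^n$ is countable, $\cale_H$ is a countable union of closed sets, i.e.\ $F_\sigma$.

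There is no real obstacle here; the only thing worth verifying carefully is that the clopen basis from Proposition~\ref{p.basis2} genuinely covers the conditions ``$w(a_1,\dots,a_n)=1$'' and ``$w(a_1,\dots,a_n)\neq 1$'' uniformly in $G$, which is exactly the content of that proposition. The finite generation of $H$ is used crucially so that the parametrizing index set $\nat^n$ is countable; without this the analogous union would be over $\nat^\nat$ and the argument would collapse.
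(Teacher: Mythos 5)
Your proposal is correct and follows essentially the same route as the paper: fix a finite generating set, characterize embeddability by the existence of a tuple in $\nat^n$ whose word relations match those of the generators, and write $\cale_H$ as a countable union (over tuples) of countable intersections (over words) of clopen sets furnished by Proposition~\ref{p.basis2}. The only difference is notational; the key observation — that finite generation makes the outer index set countable — is exactly the paper's.
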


\begin{proof}
Fix a finite generating set $\{a_1,\dots,a_n\}$ for $H$. Let $G\in\calg$ be arbitrary.

\textbf{Claim.} The group $H$ is embeddable into $\wtilde G$ if and only if there are $b_1,\dots,b_n\in\nat$ such that for every word $W$ in $n$ variables $W(a_1,\dots,a_n)=1$ in $H$ $\iff$ $W(b_1,\dots,b_n)=1$ in $\wtilde G$.

If there is an embedding $i:H\embeds \wtilde G$, then clearly for every word $W$ in $n$ variables $W(a_1,\dots,a_n)$ maps to $W(i(a_1),\dots,i(a_n))$, therefore $b_j=i(a_j)$ for each $1\leq j\leq n$ is suitable. On the other hand, if for some $b_1,\dots,b_n\in \wtilde G$ we have $W(a_1,\dots,a_n)=1$ in $H$ $\iff$ $W(b_1,\dots,b_n)=1$ in $\wtilde G$ for every word $W$ in $n$ variables, then $H$ and $\langle b_1,\dots,b_n\rangle_{\wtilde G}$ have the same presentation, hence they are isomorphic. This proves the claim.

Thus we may write $\cale_H$ as
$$\bigcup_{b_1,\dots,b_n\in\nat}\ \bigcap_{\substack{W\text{ is a word}\\ \text{in }n\text{ variables}}}\underbrace{\{G\in\calg:\ W(a_1,\dots,a_n)=1\text{ in }H \iff W(b_1,\dots,b_n)=1\text{ in }\wtilde G\}}_{\text{clopen by Proposition }\ref{p.basis2}},$$
which proves the lemma.
\end{proof}

Now we turn to the main result of this subsection.

\begin{theorem}\label{t.embed}
The following are equivalent:

(1) The group $H$ is generically embeddable.

(2) The group $H$ can be embedded into every algebraically closed group.

(3) The group $H$ is countable and every finitely generated subgroup of $H$ has solvable word problem.
\end{theorem}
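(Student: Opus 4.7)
My plan is to prove the cycle (2) $\Rightarrow$ (1) $\Rightarrow$ (3) $\Rightarrow$ (2), drawing on Theorem~\ref{t.alg_closed}, Theorem~\ref{t.alg_isom_dense}, Lemma~\ref{l.F_sigma}, Proposition~\ref{p.count_inf_alg_c_sgp}, Theorem~\ref{t.alg_c_word_p}, Lemma~\ref{l.fin_gen_extends} and Proposition~\ref{p.alg_closed_strongly_hom}. The two easy implications come first. For (2) $\Rightarrow$ (1): every $G \in \calc$ has $\wtilde G$ algebraically closed and countably infinite (algebraically closed groups are infinite by the remark after Definition~\ref{d.alg_closed}, and $\wtilde G$ has underlying set $\nat$). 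Hence (2) yields $H \embeds \wtilde G$, so $\calc \subseteq \cale_H$; since $\calc$ is comeager by Theorem~\ref{t.alg_closed}, so is $\cale_H$. For (3) $\Rightarrow$ (2): given any algebraically closed $A$, Theorem~\ref{t.alg_c_word_p} implies that each finitely generated subgroup of $H$ embeds into $A$. Since $A$ is (strongly) homogeneous by Proposition~\ref{p.alg_closed_strongly_hom} and $H$ is countable by hypothesis, Lemma~\ref{l.fin_gen_extends} upgrades this to $H \embeds A$.

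The interesting direction is (1) $\Rightarrow$ (3). First, comeagerness of $\cale_H$ forces it to be nonempty, so $H$ embeds into some $\wtilde G$ and is therefore countable. Now fix an arbitrary finitely generated subgroup $K \leq H$; since any embedding of $H$ restricts to one of $K$, we have $\cale_K \supseteq \cale_H$, so $\cale_K$ is comeager, and in particular nonmeager. Finite generation of $K$ is exactly what triggers Lemma~\ref{l.F_sigma}, making $\cale_K$ an $F_\sigma$ set; Observation~\ref{o.F_sigma} then supplies a nonempty basic clopen $\calb \subseteq \cale_K$. For any countably infinite algebraically closed group $A$, Theorem~\ref{t.alg_isom_dense} produces some $G \in \calb$ with $\wtilde G \cong A$, so $K \embeds A$. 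Since every algebraically closed group contains a countably infinite algebraically closed subgroup by Proposition~\ref{p.count_inf_alg_c_sgp}, $K$ embeds into every algebraically closed group, and Theorem~\ref{t.alg_c_word_p} concludes that $K$ has solvable word problem.

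The main obstacle is precisely this bridge from the topological hypothesis of (1) to the computability-theoretic content of (3). The transit relies on the interplay of three facts that would each be uninteresting in isolation: that $\cale_K$ is $F_\sigma$ when $K$ is finitely generated, that a nonmeager $F_\sigma$ set must have nonempty interior, and that the isomorphism class of every countably infinite algebraically closed group is dense in $\calg$. Together they force every such $A$ to meet every nonempty basic clopen $\calb \subseteq \cale_K$, and hence to contain $K$, which is exactly the hypothesis needed to invoke the Macintyre--Neumann--Simmons theorem.
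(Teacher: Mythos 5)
Your proof is correct, and the core of the hard direction is the same as the paper's: comeagerness plus the $F_\sigma$ structure of $\cale_K$ (Lemma~\ref{l.F_sigma}) forces nonempty interior via Observation~\ref{o.F_sigma}, Theorem~\ref{t.alg_isom_dense} then places a copy of each countably infinite algebraically closed group inside that interior, and Proposition~\ref{p.count_inf_alg_c_sgp} promotes this to arbitrary algebraically closed groups. The only organizational difference is the shape of the cycle: the paper proves the two biconditionals $(2)\Leftrightarrow(3)$ and $(2)\Leftrightarrow(1)$ with $(2)$ as a hub (so the hard step is phrased as $(1)\Rightarrow(2)$, closed off by Lemma~\ref{l.fin_gen_extends}), whereas you run the single cycle $(2)\Rightarrow(1)\Rightarrow(3)\Rightarrow(2)$ and land directly at $(3)$ by one further appeal to Theorem~\ref{t.alg_c_word_p}, saving one implication. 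Both routes deploy the same lemmas; yours is marginally leaner, the paper's makes the role of $(2)$ as the pivot more visible. One tiny point worth making explicit in the $(1)\Rightarrow(3)$ step: passing from ``nonempty interior'' to ``a nonempty basic clopen $\calb \subseteq \cale_K$'' implicitly uses that the sets of Observation~\ref{o.basis1} form a basis of clopen sets for $\calg$, which is worth citing.
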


\begin{proof}
(2)$\implies$(3) follows from Theorem~\ref{t.emb_alg_closed} and Theorem~\ref{t.alg_c_word_p}.

(3)$\implies$(2): Suppose (3). By Theorem~\ref{t.alg_c_word_p} every finitely generated subgroup of $H$ can be embedded into every algebraically closed group. Since every algebraically closed group is homogeneous by Proposition~\ref{p.alg_closed_strongly_hom}, (2) follows from Lemma~\ref{l.fin_gen_extends}.

(2)$\implies$(1) is immediate from Theorem~\ref{t.alg_closed}.

(1)$\implies$(2): Fix a generically embeddable group $H$. By Lemma~\ref{l.fin_gen_extends} it suffices to prove that any finitely generated subgroup $K$ of $H$ can be embedded into every algebraically closed group. Fix $K$. Note that $\cale_K$ is comeager because it contains $\cale_H$. On the other hand, $\cale_K$ is $F_\sigma$ by Lemma~\ref{l.F_sigma}, hence it has nonempty interior by Remark~\ref{r.baire_category}. However, isomorphism classes of algebraically closed groups are dense in $\calg$ by Theorem~\ref{t.alg_isom_dense}; therefore $K$ can be embedded into every countably infinite algebraically closed group. Now Proposition~\ref{p.count_inf_alg_c_sgp} completes the proof.
\end{proof}

\begin{remark}
We note that (3)$\implies$(1) follows easily from \cite[Theorem~1.1.3 and Remark 1.1.4]{GOLDBRING}.
\end{remark}

\subsection{Abelian groups}\label{ss.abelian_case}

In this section, we give a new, elementary proof of the existence of a comeager isomorphism class among countably infinite discrete \emph{abelian} groups, that is, a comeager isomorphism class in the subspace $\cala\defeq\{G\in\calg:\ \wtilde G\text{ is abelian}\}$. This was previously known only by utilizing the work of W.~Hodges on model-theoretic forcing.

%In this section, we prove that there is a generic countably infinite abelian group. That is, there is a comeager isomorphism class in the subspace $\cala\defeq\{G\in\calg:\ \wtilde G\text{ is abelian}\}$. It was brought to our attention after the completion of the present paper that this result follows from \cite[Exercise~4.2.8]{HODGES_1985}, which utilizes the technique of model-theoretic forcing. Since our proof is based on much more elementary prerequisites, we think it is worth presenting here.

To avoid confusion and preserve the coherence of Section~\ref{s.countably_infinite_discrete_groups} we \textbf{do not} switch to additive notation in elements of $\cala$.

First of all, observe that
$$\cala=\{G\in\calg:\ \forall n,k\ (nkn^{-1}k^{-1}=1)\}=\bigcap_{n,k\in\nat}\{G\in\calg:\ nkn^{-1}k^{-1}=1\}$$
is a closed subspace of $\calg$, therefore it is a Polish space.

\begin{remark}
It is very easy to see that $\cala\subseteq\calg$ has empty interior, hence it is nowhere dense. Thus for a comeager property $\calp\subseteq\calg$ the set $\calp\cap\cala$ may be meager in $\cala$.
\end{remark}

The following theorem is well-known, see \cite[Chapter~4, Theorem~3.1]{FUCHS}.

\begin{theorem}\label{t.divisible}
Every divisible abelian group is of the form
$$\left(\bigoplus_{p\in\mathbf{P}}\int[p^\infty]^{(I_p)}\right)\oplus\rat^{(I)},$$
where $\mathbf{P}$ is the set of prime numbers, $\int[p^\infty]$ is the well-known Prüfer $p$-group, $I$ and $I_p$ are arbitrary sets of indices, and for a group $G$ and a set $J$ the term $G^{(J)}$ abbreviates the direct sum $\bigoplus_{j\in J}G$.
\end{theorem}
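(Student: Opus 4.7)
The plan is to decompose a divisible abelian group $D$ into its torsion part and a torsion-free complement, and then classify each factor separately. First, observe that the torsion subgroup $T(D)$ is itself divisible: given $x\in T(D)$ of order $m$ and $n\in\nat$, pick $y\in D$ with $ny=x$; then $(mn)y=0$, so $y\in T(D)$. Similarly $D/T(D)$ is divisible and torsion-free. The key splitting step is to show that every divisible subgroup of an abelian group is a direct summand, equivalently that divisible abelian groups are injective as $\int$-modules. This I would obtain from Baer's criterion together with a Zorn's lemma argument extending partial homomorphisms. Applied to the inclusion $T(D)\hookrightarrow D$, it yields $D\cong T(D)\oplus F$ with $F\cong D/T(D)$ divisible and torsion-free.

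For the torsion-free factor, divisibility and torsion-freeness together endow $F$ with a canonical $\rat$-module structure: for $m/n\in\rat$ and $x\in F$, define $(m/n)x$ as the unique $y\in F$ with $ny=mx$ (existence by divisibility, uniqueness by torsion-freeness). Hence $F$ is a $\rat$-vector space, and picking a Hamel basis via Zorn's lemma gives $F\cong\rat^{(I)}$ for a suitable index set $I$. For the torsion factor, the standard primary decomposition gives $T(D)=\bigoplus_{p\in\mathbf{P}}T_p$, where $T_p$ is the $p$-primary component, and each $T_p$ inherits divisibility from $T(D)$.

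It remains to identify each $T_p$ with $\int[p^\infty]^{(I_p)}$. Consider the $p$-socle $T_p[p]\defeq\{x\in T_p:px=0\}$, a vector space over $\int/p\int$, and fix a basis $\{e_\alpha\}_{\alpha\in I_p}$. By divisibility, each $e_\alpha$ is the bottom of a chain $e_\alpha^{(1)}=e_\alpha,\,e_\alpha^{(2)},\,e_\alpha^{(3)},\ldots$ with $p\cdot e_\alpha^{(n+1)}=e_\alpha^{(n)}$; the subgroup $C_\alpha$ generated by this chain is isomorphic to $\int[p^\infty]$. One then verifies that the $C_\alpha$ sum to a direct sum $S\defeq\bigoplus_\alpha C_\alpha$ (any nontrivial relation would descend to a nontrivial relation among the $e_\alpha$ in $T_p[p]$), and that $S=T_p$: being divisible, $S$ is a direct summand, $T_p=S\oplus R$ with $R$ divisible; but $T_p[p]\subseteq S$ forces $R[p]=0$, and a nonzero divisible $p$-group must have nontrivial $p$-socle, so $R=0$. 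The main obstacle throughout is establishing the injectivity of divisible abelian groups, since it underpins both the initial splitting $D=T(D)\oplus F$ and the final identification $T_p=\bigoplus_\alpha C_\alpha$; once that is in hand, the rest reduces to linear algebra over $\rat$ and over $\int/p\int$, plus careful transfinite bookkeeping.
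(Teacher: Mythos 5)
The paper does not prove this theorem at all; it simply cites it as \cite[Chapter~4, Theorem~3.1]{FUCHS}. Your argument is the standard textbook proof (essentially the one in Fuchs and in Kaplansky's \emph{Infinite Abelian Groups}), and it is correct: split off the torsion subgroup using injectivity of divisible groups (Baer's criterion), identify the torsion-free complement as a $\rat$-vector space via the induced $\rat$-action, apply the primary decomposition to the torsion part, and for each primary component build a direct sum of copies of $\int[p^\infty]$ over a basis of the $p$-socle, then use injectivity once more to show the sum is all of $T_p$. One small point worth making explicit: to conclude that the $C_\alpha$ form a direct sum, given a finite relation $\sum c_\alpha=0$ with not all $c_\alpha=0$, multiply by $p^{k-1}$ where $p^k$ is the largest order among the nonzero $c_\alpha$; this pushes the relation down into $T_p[p]$ and yields a nontrivial dependence among the basis elements $e_\alpha$, a contradiction. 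Similarly, the claim that each $T_p$ inherits divisibility from $T(D)$ uses that divisibility of $T(D)$ passes to each primary component because the primary decomposition is preserved under multiplication by $n$; this is easy but deserves a sentence. Overall your proposal supplies a full proof of a fact the paper treats as a black box.
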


Let $A\in\calg$ be such that
$$\wtilde A\cong\bigoplus_{p\in\mathbf{P}}\int[p^\infty]^{(\nat)}.$$
\begin{remark}\label{r.abelian_embeds}
Another well-known theorem is that every abelian group can be embedded into a divisible abelian group, see \cite[Chapter~4, Theorem~1.4]{FUCHS}. Thus, by Theorem~\ref{t.divisible}, every countable abelian torsion group can be embedded into $\wtilde A$.
\end{remark}

\begin{remark}\label{r.unique_abelian}
If $G$ is a divisible abelian torsion group, then it can be written as the torsion summand in Theorem~\ref{t.divisible}. If every finite abelian group can be embedded into $G$, then $I_p$ is infinite for every $p\in\mathbf{P}$ since $G$ must contain infinitely many elements of order $p$ for every $p$. If $G$ is countable, then $I_p$ is countable for every $p\in\mathbf{P}$. Therefore, $\wtilde A$ is the unique, up to isomorphism, countable, divisible abelian torsion group that contains every finite abelian group (up to isomorphism).
\end{remark}

\begin{prop}\label{p.abel_G_delta}
The sets
$$\cald\defeq\{G\in\cala:\ \wtilde G\text{ is divisible}\},\qquad \calt\defeq\{G\in\cala:\ \wtilde G\text{ is a torsion group}\}$$ and
$$\calf\defeq\{G\in\cala:\ \text{every finite abelian group can be embedded into }\wtilde G\}$$ are $G_\delta$ in $\cala$.
\end{prop}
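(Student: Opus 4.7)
The plan is to write each of the three sets as a countable intersection of open sets, relying on the clopen basis provided by Proposition~\ref{p.basis2} (extended in the obvious way with inverses as in Remark~\ref{r.inverse}). Since $\cala$ is closed in $\calg$, a subset that is $G_\delta$ in $\calg$ intersected with $\cala$ is $G_\delta$ in $\cala$, so it suffices to exhibit the $G_\delta$ presentations at the level of word equations.

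For $\calt$, the condition ``$\wtilde G$ is torsion'' unfolds directly as $\forall n\in\nat\ \exists k\in\nat\ n^k=1$. Each set $\{G\in\cala:\ n^k=1\}$ is clopen by Proposition~\ref{p.basis2}, so
\[
\calt=\bigcap_{n\in\nat}\bigcup_{k\in\nat}\{G\in\cala:\ n^k=1\}
\]
is $G_\delta$.

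For $\cald$, divisibility in the abelian setting means $\forall g\in\nat\ \forall n\in\nat\ \exists h\in\nat\ h^n=g$. Again each set $\{G\in\cala:\ h^n=g\}$ is clopen by Proposition~\ref{p.basis2}, so
\[
\cald=\bigcap_{g\in\nat}\bigcap_{n\in\nat}\bigcup_{h\in\nat}\{G\in\cala:\ h^n=g\}
\]
is $G_\delta$.

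For $\calf$, I would first note that there are only countably many finite abelian groups up to isomorphism. For each such group $F$, fix an enumeration $f_1=1_F,f_2,\ldots,f_m$ of its elements together with its multiplication table $f_if_j=f_{\mu(i,j)}$. Then $F$ embeds into $\wtilde G$ if and only if there exist pairwise distinct $b_1=1,b_2,\ldots,b_m\in\nat$ with $b_ib_j=b_{\mu(i,j)}$ for every $i,j\leq m$. This is the condition that a specific basic clopen set (in the sense of Observation~\ref{o.basis1}) is nonempty above $G$, so
\[
\{G\in\cala:\ F\text{ embeds into }\wtilde G\}=\bigcup_{\substack{b_2,\ldots,b_m\in\nat\\ \text{pairwise distinct from }1\text{ and each other}}}\{G\in\cala:\ b_ib_j=b_{\mu(i,j)}\ \forall i,j\leq m\}
\]
is open, and intersecting over the countable list of finite abelian groups $F$ shows $\calf$ is $G_\delta$.

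There is no real obstacle in this proof; the mild bookkeeping issue is just writing each defining condition in terms of finitely many word equations and inequations so that Proposition~\ref{p.basis2} applies, and (in the $\calf$ case) reducing ``embeddable'' to an explicit existential statement about finitely many elements of $\nat$.
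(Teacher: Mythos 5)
Your proof is correct and follows essentially the same route as the paper's: unfold each defining condition into a $\forall\exists$ statement over $\nat$, observe that the innermost sets are clopen by Proposition~\ref{p.basis2}, and for $\calf$ reduce to finitely many finite abelian groups, each yielding an open set via an existential condition over tuples of distinct naturals. The only cosmetic difference is that you explicitly fix $b_1=1$; this is redundant since $b_1b_1=b_1$ already forces $b_1=1$ by cancellation, which is why the paper simply asks for pairwise distinct $g_1,\ldots,g_n$ satisfying the multiplication-table constraints.
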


\begin{proof}
By definition,
$$\cald=\{G\in\cala:\ \forall n,k\in\nat\ \exists m\in\nat\ (m^k=n)\}=\bigcap_{n,k\in\nat}\ \bigcup_{m\in\nat}\ \underbrace{\{G\in\cala:\ m^k=n\}}_{\text{clopen}},$$
which is $G_\delta$ in $\cala$.
Again, by definition,
$$\calt=\{G\in\cala:\ \forall n\in\nat\ \exists k\in\nat\ (n^k=1)\}=\bigcap_{n\in\nat}\ \bigcup_{k\in\nat}\ \underbrace{\{G\in\cala:\ n^k=1\}}_{\text{clopen}},$$
which is $G_\delta$ in $\cala$.
For $\calf$ note that there are countably infinitely many finite abelian groups up to isomorphism, hence it suffices to prove that for any fixed finite abelian group $H$ the set $\cala\cap\cale_H$ is $G_\delta$. Let $\{h_1,\dots,h_n\}$ be the underlying set of $H$. For a $G\in\cala$ the group $H$ is embeddable into $\wtilde G$ if and only if there exist pairwise distinct numbers $g_1,\dots,g_n\in \nat$ such that $h_i\cdot h_j=h_l$ in $H$ implies $g_i\cdot g_j=g_l$ in $\wtilde G$ for all $i,j,l\leq n$. That is,
$$\cale_H\cap\cala=\bigcup_{\substack{g_1,\dots,g_n\in\nat\\ \text{p.~distinct}}}\ \bigcap_{\substack{h_i,h_j,h_l\in H,\\ h_i\cdot h_j=h_l}}\ \{G\in\cala:\ g_i\cdot g_j=g_l\},$$
which is open in $\cala$.
\end{proof}

From Remark~\ref{r.unique_abelian} we know that the isomorphism class $\cali_A\defeq\{G\in\calg:\ \wtilde G\cong \wtilde A\}$ can be written as $\cala\cap\cald\cap\calt\cap\calf$, which is $G_\delta$ in $\cala$. Now we show that it is dense in $\cala$.

\begin{prop}\label{p.abel_dense}
The isomorphism class $\cali_A$ is dense in $\cala$.
\end{prop}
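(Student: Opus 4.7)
The plan is to produce, inside every nonempty basic clopen subset of $\cala$, a multiplication table isomorphic to $\wtilde A$. Fix such a basic clopen $\calb=\{G\in\calg:\ \forall i,j\leq k\ (i\cdot j=m_{i,j})\}$ with $\calb\cap\cala\neq\emptyset$, and set $S\defeq\supp\calb$. By Observation~\ref{o.induced_homeo}, it suffices to produce an injection $\phi\colon S\to\wtilde A$ (viewing $\wtilde A$ as having underlying set $\nat$) with $\phi(1)=1$ and $\phi(i)\cdot_A\phi(j)=\phi(m_{i,j})$ for all $i,j\leq k$: extending such a $\phi$ to a bijection $\psi\colon\nat\to\nat$ (possible since $S$ is finite), the table $h_{\psi^{-1}}(A)$ is isomorphic to $\wtilde A$ and lies in $\calb$ by a direct computation.

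To build $\phi$, pick any $H\in\calb\cap\cala$ and consider $L\defeq\langle S\rangle_{\wtilde H}$, which is finitely generated abelian. By the structure theorem $L\cong\int^r\oplus T$ with $T$ finite abelian; write the element $l_s\in L$ corresponding to $s\in S$ as $l_s=(f_s,t_s)\in\int^r\oplus T$. Choose $N\in\nat$ larger than the absolute value of any coordinate of any difference $f_s-f_{s'}$ with $s\neq s'$. Then the quotient map $L\to L'\defeq(\int/N\int)^r\oplus T$ is injective on $\{l_s:s\in S\}$, sends $l_1=0$ to $0$, and respects the multiplication relations $l_i\cdot l_j=l_{m_{i,j}}$ dictated by $\calb$. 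Since $L'$ is a finite abelian group, Remark~\ref{r.abelian_embeds} yields an embedding $L'\embeds\wtilde A$; the composition $S\to L\to L'\embeds\wtilde A$ is the desired $\phi$.

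The main conceptual obstacle is that $L$ may have a nontrivial free part, so $L$ itself does not embed into the torsion group $\wtilde A$. The workaround is that we need only realize the finite partial multiplication on $S$, not all of $L$: quotienting by $N\int^r$ for sufficiently large $N$ collapses the free part to a finite cyclic factor without merging any pair of distinct points of $S$, after which the embedding into $\wtilde A$ follows from the abundance of finite abelian subgroups in $\wtilde A$.
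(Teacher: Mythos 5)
Your proof is correct. It uses the same core idea as the paper's: realize the finite partial multiplication table $\supp\calb$ inside $\wtilde A$ by first killing the torsion-free obstruction modulo a large enough integer so that no two elements of $\supp\calb$ collide, then embedding the resulting torsion group into $\wtilde A$ via Remark~\ref{r.abelian_embeds}, and finally pulling back through an induced homeomorphism. The difference is in which structure theorem you invoke and at what level: the paper embeds all of $\wtilde H$ into a countable divisible group, writes that as $\bigl(\bigoplus_p\int[p^\infty]^{(I_p)}\bigr)\oplus\rat^{(I)}$, and quotients $\rat^{(I)}$ by $n\int^{(I)}$, producing a countable torsion group; you instead work only with the finitely generated subgroup $L=\langle\supp\calb\rangle_{\wtilde H}$, use the fundamental theorem of finitely generated abelian groups $L\cong\int^r\oplus T$, and quotient $\int^r$ by $N\int^r$, producing a finite group. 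Your route is a bit more economical (no divisible hull needed, and the image group is finite rather than merely countable torsion), while the paper's version shows directly that the whole of $\wtilde H$ admits a homomorphism to $\wtilde A$ injective on $\supp\calb$. Both are sound; the collision-avoidance estimate is verified correctly in your write-up, and the check that $h_{\psi^{-1}}(A)\in\calb$ goes through since $\phi$ is a partial homomorphism fixing the identity.
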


\begin{proof}
Fix any nonempty basic clopen $\calb=\{G\in\cala:\ \forall i,j\leq k\ (i\cdot j=m_{i,j})\}$ with $k\in\nat$ and $m_{i,j}\in\nat$ for all $i,j\leq k$. Pick some $H\in\calb$ and a divisible abelian group $K$ with an embedding $\varphi:\wtilde H\embeds K$; such a $K$ exists by Remark~\ref{r.abelian_embeds}. Clearly, $K$ can be chosen to be countable. We write $K$ in the form
$$\left(\bigoplus_{p\in\mathbf{P}}\int[p^\infty]^{(I_{K,p})}\right)\oplus\rat^{(I_K)}.$$
Consider the finite subset $\varphi(\supp\calb)\subseteq K$ (recall Notation~\ref{n.supp}). Every $x\in\varphi(\supp\calb)$ has finitely many nonzero coordinates $x_i$ with $i\in I_K$. Let $n$ be a natural number greater than $2\cdot\max\{|x_i|:\ x\in\varphi(\supp\calb), i\in I_K\}$. Let $N$ be the subgroup of $K$ generated by elements of the form $((0,0,\dots),(0,\dots,0,\underset{r\text{th}}{n},0,\dots))$ for every $r\in I_K$. Let $\psi:K\to K/N$ be the quotient map. Clearly, $K/N$ is
$$\left(\bigoplus_{p\in\mathbf{P}}\int[p^\infty]^{(I_{K,p})}\right)\oplus(\rat/n\int)^{(I_K)}.$$
This is a countable abelian torsion group, thus there is an embedding $\nu: K/N\embeds \wtilde A$ by Remark~\ref{r.abelian_embeds}. Notice that $\nu\circ\psi\circ\varphi:\ \wtilde H\to \wtilde A$ is homomorphism that is injective on the set $\supp\calb\subseteq\wtilde H$ by the choices of $n$ and $N$. Thus there is a bijection $\vartheta:\nat\to\nat$ that extends the finite bijection $(\nu\circ\psi\circ\varphi|_{\supp\calb})^{-1}$. For such a $\vartheta$ we have $h_\vartheta(A)\in\calb$ because $H$ and $h_\vartheta(A)$ coincide on $\{1,\dots,k\}\times\{1,\dots,k\}$ (recall from Observation~\ref{o.induced_homeo} that $h_\vartheta$ is the induced homeomorphism). Since $\wtilde{h_\vartheta(A)}\cong\wtilde A$, we conclude that $\cali_A$ is dense in $\cala$.
\end{proof}

\begin{cor}\label{c.generic_abelian}
The isomorphism class $\cali_A$ is comeager in $\cala$.
\end{cor}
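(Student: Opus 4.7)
The plan is to combine all the machinery established in this subsection in a single short argument, reducing the statement to a dense $G_\delta$ application of Proposition~\ref{p.comeager}. The key observation, already highlighted in the paragraph preceding Proposition~\ref{p.abel_dense}, is the decomposition
\[
\cali_A = \cala\cap\cald\cap\calt\cap\calf,
\]
which follows from Remark~\ref{r.unique_abelian}: any element of the right-hand side gives a countable, divisible, abelian torsion group containing every finite abelian group, and such a group is isomorphic to $\wtilde A$.

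First I would observe that $\cala$ is a Polish space: it was shown at the start of the subsection that $\cala$ is closed in $\calg$, hence by Theorem~\ref{t.G_delta} it inherits a Polish structure, and so the Baire Category Theorem (Theorem~\ref{t.bct}) applies in $\cala$. Next, Proposition~\ref{p.abel_G_delta} gives that each of $\cald$, $\calt$, $\calf$ is $G_\delta$ in $\cala$, so their intersection $\cali_A$ is also $G_\delta$ in $\cala$.

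Then I would invoke Proposition~\ref{p.abel_dense}, which states that $\cali_A$ is dense in $\cala$. Putting these together, $\cali_A$ is a dense $G_\delta$ subset of the Baire space $\cala$, and Proposition~\ref{p.comeager} (in its trivial direction) yields that $\cali_A$ is comeager in $\cala$, completing the proof.

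There is really no serious obstacle here: all the work has been done in Proposition~\ref{p.abel_G_delta} and Proposition~\ref{p.abel_dense}. The only conceptual point worth double-checking is that the identification $\cali_A = \cala\cap\cald\cap\calt\cap\calf$ genuinely characterizes the isomorphism class of $\wtilde A$ and not merely a superset of it; this is precisely the content of Remark~\ref{r.unique_abelian}, which uses the classification of divisible abelian groups (Theorem~\ref{t.divisible}) together with the observation that containing every finite abelian group forces each $I_p$ to be infinite, while countability forces each $I_p$ to be at most countable.
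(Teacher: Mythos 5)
Your proposal is correct and matches the paper's (implicit) argument exactly: the paper also notes just before Proposition~\ref{p.abel_dense} that $\cali_A = \cala\cap\cald\cap\calt\cap\calf$ is $G_\delta$ by Proposition~\ref{p.abel_G_delta}, then deduces comeagerness from denseness via Proposition~\ref{p.comeager}. No discrepancies.
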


\section{Compact metrizable abelian groups}\label{s.cma_groups}

\subsection{Pontryagin duality}

Here we remind the reader to the notion and fundamental properties of Pontryagin duality. Let us denote the circle group by $\circle$. For a locally compact abelian group (hereafter LCA group) $G$ the dual group $\what G$ is the set of all continuous homomorphisms $\chi:G\to\circle$ with pointwise addition and the compact-open topology, and $\what G$ is also an LCA group (see \cite[Chapter~1]{RUDIN}). Also recall:

\begin{theorem}[Pontryagin Duality Theorem]
If $G$ is an LCA group, then its double dual $\widehatto{G}{\widehat G}$ is canonically isomorphic to $G$ itself.
\end{theorem}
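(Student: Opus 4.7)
The plan is to introduce the canonical evaluation map
$$\alpha: G \to \widehatto{G}{\widehat G},\qquad \alpha(g)(\chi) \defeq \chi(g),$$
and to show that $\alpha$ is an isomorphism of topological groups. That $\alpha$ is a group homomorphism is immediate from the definitions, and continuity of $\alpha$ follows from the definition of the compact-open topology on $\widehat G$ together with joint continuity of evaluation $(g,\chi)\mapsto \chi(g)$.

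First I would verify the theorem by direct computation in a small list of base cases: $G=\circle$, $G=\int$, $G=\mathbb{R}$, and $G$ finite abelian. In each of these $\widehat G$ can be identified by hand ($\widehat{\circle}\cong\int$, $\widehat{\int}\cong\circle$, $\widehat{\mathbb{R}}\cong\mathbb{R}$, and finite abelian groups are non-canonically self-dual), and one checks directly that $\alpha$ is a topological isomorphism. Next I would establish the functorial machinery: duality sends short exact sequences of LCA groups to short exact sequences, with the annihilator $H^\perp$ playing the role of $\widehat{G/H}$; duality commutes with finite direct products; and duality converts direct systems into inverse systems. These compatibilities let one extend the theorem from the list of base cases to the widest possible class.

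From this point there are two standard routes. The \emph{structural} route appeals to the classification of compactly generated LCA groups (every such group is of the form $\mathbb{R}^a\times\int^b\times K$ with $K$ compact) combined with the fact that every LCA group is a directed union of compactly generated open subgroups; after dualizing one obtains a matching inverse-limit presentation, and one checks that $\alpha$ is compatible with these limits. The \emph{analytic} route, as developed in \cite[Chapter~1]{RUDIN}, proceeds through Fourier analysis on $L^1(G)$: Gelfand theory for this commutative Banach algebra yields that characters separate points of $G$, forcing $\alpha$ to be injective, and then the Plancherel theorem together with the Fourier inversion formula is used to show that the image $\alpha(G)$ is simultaneously dense and closed in $\widehatto{G}{\widehat G}$.

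The main obstacle in either approach is surjectivity of $\alpha$: its continuity, injectivity, and openness onto its image are comparatively formal, whereas producing, for every $\chi\in\widehatto{G}{\widehat G}$, an honest $g\in G$ with $\chi=\alpha(g)$ is where the real work hides, either inside the classification theorem or inside the full strength of the Plancherel--inversion machinery. Since the theorem is only invoked as a black box in what follows, I would cite \cite{RUDIN} rather than reprove it and turn immediately to its application inside $\cals(\TT)$.
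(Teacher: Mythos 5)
The paper states the Pontryagin Duality Theorem as a black box and cites \cite{RUDIN} without proof, and your proposal ultimately does the same — cite rather than reprove — so the treatment agrees. Your sketch of the two standard proof routes (structural, via the classification of compactly generated LCA groups and direct/inverse limits; analytic, via Gelfand theory on $L^1(G)$ and the Plancherel/Fourier inversion machinery) is an accurate account of how the theorem is established and correctly locates the real difficulty in the surjectivity of the evaluation map.
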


For reference we list some well-known properties of Pontryagin duality here.

\begin{prop}\label{p.pontryagin}
For an LCA group $G$ the following hold:

(1) An LCA group $A$ embeds into $G$ $\iff$ $\what A$ is a quotient of $\what G$. \cite[Theorem~2.1.2.]{RUDIN}

(2) For a sequence $(A_i)_{i\in\nat}$ of compact abelian groups we have $\ds\what{\prod_{i\in\nat}A_i}=\bigoplus_{i\in\nat}\what A_i$. \cite[Theorem~2.2.3.]{RUDIN}

(3) $G$ is compact $\iff$ $\what G$ is discrete. \cite[Theorem~1.2.5.]{RUDIN}

(4) $G$ is finite $\iff$ $\what G$ is finite. (Follows from (3) and the Pontryagin Duality Theorem.)

(5) $G$ is compact metrizable $\iff$ $\what G$ is discrete countable. \cite[Corollary on page~96]{MORRIS}

(6) $G$ is compact torsion-free $\iff$ $\what G$ is discrete divisible. \cite[Theorem~31.]{MORRIS}

(7) $G$ is compact totally disconnected $\iff$ $\what G$ is a discrete torsion-group. \cite[Corollary~1. on page~99]{MORRIS}

(8) The dual of $\circle$ is $\int$. \cite[pages 12-13.]{RUDIN}
\end{prop}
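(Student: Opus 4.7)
The plan is to treat this proposition as a reference list of classical consequences of the Pontryagin Duality Theorem; judging from the citations to Rudin and Morris appended to each item, the author's approach is simply to point to the standard sources. If forced to produce self-contained proofs, I would handle each item with one short duality argument, all building on the Duality Theorem stated just above.

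For (1), I would use the restriction map $\what G \to \what A$, $\chi \mapsto \chi \circ i$, and prove surjectivity by showing that characters on a closed subgroup of an LCA group extend to the whole group (a standard consequence of the annihilator formalism $\what{G/A} \cong A^\perp$); the converse direction follows by dualizing and applying the Duality Theorem. For (2), the key observation is that $\circle$ has no small subgroups, which forces any continuous character on $\prod A_i$ to be trivial on all but finitely many factors, so the dual decomposes as a direct sum. For (3), if $G$ is compact then the basic compact-open neighborhood $\{\chi : |\chi(g)-1|<1/2 \text{ for all } g\in G\}$ of the trivial character is a singleton in $\what G$, so $\what G$ is discrete; the converse uses double duality. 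Item (4) follows from (3) plus the Duality Theorem, and item (8) is immediate once one identifies continuous homomorphisms $\circle\to\circle$ as the power maps $z\mapsto z^n$.

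The more substantive items are (5), (6), and (7). For (5), I would use that compact metrizability is equivalent to second countability together with Peter--Weyl (equivalently, Stone--Weierstrass applied to characters) to conclude that $\what G$ is countable. For (6), torsion-freeness of $G$ means multiplication by $n$ is injective on $G$ for every $n$, which dualizes to surjectivity of multiplication by $n$ on $\what G$, i.e., divisibility. For (7), a compact totally disconnected abelian group is a projective limit of finite groups (it has a neighborhood base of open subgroups at the identity), and dually $\what G$ is a directed union of finite subgroups, hence torsion.

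The main obstacle, if any, is carefully matching the structural dictionary in (5)--(7) -- in particular (5), where one must combine second countability with the fact that characters separate points to get a countable dual. But since every item is classical and a precise citation is already given, the actual proof proposal reduces to: cite Rudin and Morris as listed, and remark that each equivalence is an immediate consequence of the Duality Theorem together with the structural observation indicated above.
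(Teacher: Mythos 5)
Your proposal correctly identifies that the paper gives no proof here: each item is simply attributed to Rudin or Morris (with item (4) noted as following from (3) and the Duality Theorem), and the proposition functions as a reference list. Your supplementary sketches of each item are accurate and standard, but they go beyond what the paper does.
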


\subsection{The space of compact metrizable abelian groups}\label{s.univ_cma}

We view $\circle$ as the additive group $\real/\int$, that is, the real numbers modulo 1, and we use the usual metric $d_\circle$. Clearly, the circle group is a compact metrizable abelian group (hereafter CMA group). Let $\TT$ denote the countably infinite direct product of $\circle$ with itself. Again, it is a CMA group and every closed subgroup of $\TT$ is a CMA group as well. We equip $\TT$ with the metric
$$d(x,y)=\sum_{n=1}^\infty\frac{1}{2^n}d_\circle(x(n),y(n)).$$
Note that by Proposition~\ref{p.pontryagin} (2) and (8) the dual of $\TT$ is $F\defeq\int^{(\nat)}$. By (1) a CMA group $G$ is embeddable into $\TT$ if and only if the countable discrete abelian group $\what G$ is a quotient of $F$. Since $F$ is the free abelian group on countably infinitely many generators, every countable discrete abelian group is a quotient of $F$, hence every CMA group is embeddable into $\TT$. Or equivalently, every CMA group is isomorphic (as a topological group) to some element of $\cals(\TT)$ (recall Definition~\ref{d.compact_subgroups}).

Thus we may view $(\cals(\TT),d_H)$ as \textit{the space of CMA groups} (here $d_H$ is the Hausdorff metric induced by $d$). Now we discuss a useful observation about finite groups in $\cals(\TT)$.

\begin{defi}\label{d.supp}
For a group $G\in\cals(\TT)$ we define the \textbf{support} of $G$ by
$$\supp (G)\defeq\{n\in\nat:\ \exists g\in G\ (g(n)\neq 0)\}.$$
\end{defi}

\begin{notation}
Let $\mathcal{F}$ denote the set of finite groups in $\cals(\TT)$ with finite support.
\end{notation}

\begin{prop}\label{p.finite_dense}
$\calf$ is dense in $\cals(\TT)$.
\end{prop}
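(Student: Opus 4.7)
The plan is to approximate any $G\in\cals(\TT)$ in two stages: first truncate the infinite product to a finite-dimensional torus, then replace the resulting compact Lie subgroup by its $m$-torsion for a suitable $m$. Given $\epsilon>0$, first I would pick $N\in\nat$ with $2^{-N}<\epsilon$. Set $H\defeq\pi_{[N]}(G)$ with $\pi_{[N]}$ from Notation~\ref{n.projection}. Since $\pi_{[N]}$ is a continuous homomorphism, $H$ is a compact subgroup of $\TT$ with $\supp(H)\subseteq\{1,\ldots,N\}$. Because every $x\in\TT$ satisfies
$$d(x,\pi_{[N]}(x))=\sum_{n>N}\tfrac{1}{2^n}d_\circle(x(n),0)\leq\tfrac{1}{2}\sum_{n>N}2^{-n}<\tfrac{\epsilon}{2},$$
a short computation from the definition of the Hausdorff metric gives $d_H(G,H)<\epsilon/2$.

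Next I would approximate $H$ itself by a finite subgroup. Viewing $H$ as a closed subgroup of $\circle^N$, it is a compact abelian Lie group. Via Pontryagin duality (Proposition~\ref{p.pontryagin}) this is immediate: $\what H$ is a quotient of $\what{\circle^N}=\int^N$, hence a finitely generated abelian group, so $\what H\cong\int^k\oplus F_0$ for some $k\leq N$ and finite abelian $F_0$, and therefore $H\cong\circle^k\times F_0$. In particular the torsion subgroup of $H$ is dense, since the torsion of $\circle^k\times F_0$ equals $(\rat/\int)^k\times F_0$, which is dense in $\circle^k\times F_0$.

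By compactness of $H$, cover $H$ with finitely many balls of radius $\epsilon/2$ centered at torsion elements $s_1,\ldots,s_r$, and let $m$ be the least common multiple of their orders. Then the $m$-torsion subgroup $H[m]\defeq\{h\in H:mh=0\}$ contains $\{s_1,\ldots,s_r\}$, so it is $\epsilon/2$-dense in $H$; since $H[m]\subseteq H$, this gives $d_H(H[m],H)\leq\epsilon/2$. Moreover $H[m]\cong(\int/m)^k\times F_0[m]$ is finite and $\supp(H[m])\subseteq\supp(H)\subseteq\{1,\ldots,N\}$, so $H[m]\in\calf$. Combining the two estimates, $d_H(H[m],G)<\epsilon$, which yields density.

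The only mildly substantive ingredient is the structural fact that a closed subgroup of $\circle^N$ has the form $\circle^k\times F_0$ with dense torsion; I expect this to be the main (but very mild) obstacle, and it is made painless by invoking Pontryagin duality as above. Everything else is a routine verification from the definitions of $d_H$, $\pi_{[N]}$, and $\calf$.
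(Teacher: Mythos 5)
Your proof is correct but takes a genuinely different route from the paper's. Both begin by truncating to $H\defeq\pi_{[N]}(G)\leq\circle^N$ and estimating $d_H(G,H)<\eps/2$, but from there they diverge. You invoke the structure theory of compact abelian Lie groups (via duality): a closed subgroup of $\circle^N$ is topologically isomorphic to $\circle^k\times F_0$ with $F_0$ finite, hence its torsion subgroup is dense, so a finite $\eps/2$-net of torsion elements lies in a single $m$-torsion subgroup $H[m]\subseteq H$, which is finite, has support in $\{1,\ldots,N\}$, and satisfies $d_H(H[m],H)\leq\eps/2$. The paper instead runs a Diophantine argument: it picks a finite $\eps/2$-net $x_1,\ldots,x_k$ in $H$, applies the simultaneous Dirichlet approximation theorem to produce rationals $p_j^i/q$ with a small common denominator $q$ approximating all coordinates $x_i(j)$, and takes $L$ to be the (finite, since all generators are $q$-torsion) subgroup they generate. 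Your approach buys a cleaner estimate: since $H[m]\subseteq H$, only the inclusion $H\subseteq(H[m])_{\eps/2}$ needs checking, whereas the paper's $L$ is not contained in $H$, so both Hausdorff inclusions have to be verified, and $L\subseteq H_\eps$ requires the careful bookkeeping $k\cdot N\cdot q\cdot\frac{\eps}{2qkN}=\frac{\eps}{2}$ that explains why the Dirichlet parameter $M$ had to be chosen large. The paper's route buys elementariness (no appeal to the classification of closed subgroups of tori), but since Pontryagin duality is already a workhorse in this section of the paper, your version is arguably the more natural one in context.
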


\begin{proof}
Fix some $\eps>0$ and $K\in \cals(\TT).$  Since $\pi_{[n]}(K)\overset{d_H}{\to}K$ (as $n\to\infty$, recall Notation~\ref{n.projection}), we may assume $K\leq\circle^N\times\{0\}\times\{0\}\times\ldots$ for some $N\in\nat$. We abuse notation and use simply $\circle^N$ instead of $\circle^N\times\{0\}\times\{0\}\times\ldots$. We work with the compatible metric $d_{\circle^N}(x,y)\defeq\sum_{i=1}^N d_\circle(x(i),y(i))$.

Let $x_1,x_2\ldots x_k$ be a finite $\frac{\eps}{2}$-net in $K$, and let us fix an integer $M$ such that $M>\frac{2Nk}{\eps}$.

We need a simultaneous version of Dirichlet's Approximation Theorem \cite[page~27]{SCHMIDT}.

\begin{theorem}\label{t.dirichlet}
Let $\alpha_1,\ldots \alpha_d$ be real numbers and let $Q$ be a positive integer. Then there exist $p_1,\ldots p_d,q\in\int$, $1\leq q\leq Q^d$, such that $|\alpha_i-\frac{p_i}{q}|\leq\frac{1}{qQ}$ for all $1\leq i\leq d$.
\end{theorem}

We apply this theorem with the parameters $d=kN$ and $Q=M$ to the real numbers $x_1(1),\ldots x_1(N),\ldots x_k(1),\ldots x_k(N)$. Thus we get integers $p_1^1,\ldots p_N^1,\ldots p_1^k,\ldots p_N^k$ and $q$ with $1\leq q\leq M^{kN}$ such that

$$\left|x_i(j)-\frac{p_j^i}{q}\right|\leq\frac{1}{qM}<\frac{\eps}{2qkN}.$$

The last inequality follows from our assumption on $M$.

This means that $d_\circle\left(x_i(j),\frac{p_j^i}{q}\right)<\frac{\eps}{2qkN}$ holds on the circle for each $1\leq i\leq k$ and $1\leq j\leq N$.
Let $L$ be the generated subgroup $\left\langle\left( \frac{p_1^1}{q},\ldots,\frac{p_N^1}{q}\right),\ldots,\left( \frac{p_1^k}{q},\ldots,\frac{p_N^k}{q}\right) \right\rangle\leq\circle^N$. Note that $L$ is finite.

We claim that $d_H(L,K)<\eps$. First we prove that $L_\eps\supseteq K$ (recall that $L_\eps$ is the $\eps$-neighborhood of $L$). This holds since $x_1,x_2,\ldots, x_k$ was an $\frac{\eps}{2}$-net in $K$ and 

$$d_{\circle^N}\left(x_i,\left( \frac{p_1^i}{q},\ldots,\frac{p_N^i}{q}\right)\right)=\sum_{j=1}^N d_{\circle}\left(x_i(j), \frac{p_j^i}{q}\right)<\frac{\eps}{2qkN} \cdot N=\frac{\eps}{2qk}\leq\frac{\eps}{2}.$$ 

For $K_\eps\supseteq L$ notice that every element of $L$ is of the form
$$\sum_{i=1}^k l_i\cdot\left( \frac{p_1^i}{q},\ldots,\frac{p_N^i}{q}\right),$$

with $l_i\in\int$, $0\leq l_i<q$ for each $1\leq i\leq k$, thus

$$d_{\circle^N}{\left(\sum_{i=1}^k l_i x_i, \sum_{i=1}^k l_i\cdot\left( \frac{p_1^i}{q},\ldots,\frac{p_N^i}{q}\right)\right)}\leq\sum_{i=1}^k\sum_{j=1}^N l_i\cdot d_{\circle}{\left(x_i(j), \frac{p_j^i}{q}\right)}<k\cdot N\cdot q\cdot\frac{\eps}{2qkN} =\frac{\eps}{2},$$
which proves the claim since $\sum_{i=1}^k l_i x_i\in K$.
\end{proof}

\subsection{The comeager isomorphism class}

\begin{theorem}\label{t.isomclass_abelian}
There exists a generic compact metrizable abelian group. That is, there exists a comeager isomorphism class in $\cals(\TT)$. Namely, it is the isomorphism class of
$$Z=\prod_{p\in \mathbf{P}}(Z_p)^\nat,$$
where $\textbf{P}$ is the set of prime numbers and $Z_p$ is the group of $p$-adic integers.
\end{theorem}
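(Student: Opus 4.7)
The plan is to mimic Subsection~\ref{ss.abelian_case} via Pontryagin duality, identifying $\cali_Z$ as a dense $G_\delta$ subset of $\cals(\TT)$. First, combining parts (2) and (8) of Proposition~\ref{p.pontryagin} with the well-known isomorphism $\what{Z_p}\cong\int[p^\infty]$ yields
$$\what Z = \what{\prod_{p\in\mathbf{P}} Z_p^\nat} = \bigoplus_{p\in\mathbf{P}}\int[p^\infty]^{(\nat)},$$
which is precisely the group $\wtilde A$ from Subsection~\ref{ss.abelian_case}. By Remark~\ref{r.unique_abelian}, $\wtilde A$ is characterized up to isomorphism among countable discrete abelian groups by being divisible, torsion, and containing every finite abelian group. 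Translating these three properties to $Z$ via parts (1), (6), (7) of Proposition~\ref{p.pontryagin} (together with the self-duality of finite abelian groups) gives the clean characterization
$$\cali_Z = \{G \in \cals(\TT) : G \text{ is torsion-free, totally disconnected, and every finite abelian group is a quotient of } G\}.$$

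Each of the three defining conditions is $G_\delta$ in $\cals(\TT)$: the complement of torsion-freeness is the $F_\sigma$ set $\bigcup_{n,k}\{G : G\cap U_{n,k}\neq\emptyset\}$ with $U_{n,k}\defeq\{x\in\TT:nx=0,\ d(x,0)\geq 1/k\}$ compact; total disconnectedness is the countable intersection over $m\in\nat$ of the open conditions that $G$ admits a clopen subgroup of diameter less than $1/m$; and for each of the countably many isomorphism types of finite abelian $F$, the set $\{G : F \text{ is a quotient of } G\}$ is open, since the existence of a suitable character on $G$ persists under small Hausdorff perturbations. Hence $\cali_Z$ is $G_\delta$.

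The main obstacle is density of $\cali_Z$: since finite groups in $\calf$ are torsion, we must construct torsion-free $G\cong Z$ arbitrarily close to an arbitrary $K\in\calf$. Given such a $K$ with support contained in $\{1,\dots,N\}$ and $\eps>0$, fix $N'\geq N$ with $1/2^{N'}<\eps$. Since every finite abelian group is a quotient of $Z$, fix a continuous surjection $\phi:Z\twoheadrightarrow K$; let $\iota:K\hookrightarrow\pi_{[N']}(\TT)\subseteq\TT$ be the inclusion, and fix an embedding $j:Z\hookrightarrow\prod_{i>N'}\circle\subseteq\TT$ (which exists since $Z$ is a CMA group and $\prod_{i>N'}\circle$ is canonically isomorphic to $\TT$). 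Define $\Phi:Z\to\TT$ by $\Phi(z)\defeq\iota(\phi(z))+j(z)$; since $j$ is injective, so is $\Phi$, and hence its image $G\defeq\Phi(Z)\in\cals(\TT)$ is isomorphic (as a topological group) to $Z$. Since $\pi_{[N']}(G)=K$ and the tail $j(z)$ has distance at most $1/2^{N'}$ from $0$ for every $z$, we obtain $d_H(G,K)<\eps$, proving density. Combined with the $G_\delta$ structure, $\cali_Z$ is comeager in $\cals(\TT)$.
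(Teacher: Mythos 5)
Your overall structure matches the paper's: use Pontryagin duality to identify $\cali_Z$ as the set of $G\in\cals(\TT)$ that are torsion-free, totally disconnected, and admit every finite abelian group as a quotient, and then show this set is dense $G_\delta$. Your density argument is genuinely different from the paper's, and tidier: the paper proves each of the three defining properties dense separately (Lemmas~\ref{l.generic_finite_quotient}, \ref{l.generic_torsion_free}, \ref{l.generic_tot_disconnected}) and lets Baire category combine them, whereas you directly build a group isomorphic to $Z$ within $\eps$ of any $K\in\calf$, namely the image of $z\mapsto\iota(\phi(z))+j(z)$ where $\phi:Z\to K$ is a continuous surjection and $j$ is a tail embedding of $Z$. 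The verification that $\Phi$ is injective, that $\pi_{[N']}(G)=K$, and that $d_H(G,K)<\eps$ is correct, and this unifies in one stroke what the paper does in three separate constructions.

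There is, however, a genuine gap in the $G_\delta$ half. You assert that $\{G:\ F\text{ is a quotient of }G\}$ is open because ``the existence of a suitable character on $G$ persists under small Hausdorff perturbations,'' and likewise that ``admits a clopen subgroup of diameter $<1/m$'' is an open condition, but neither claim is justified, and these are precisely the technically hardest steps of the theorem. The openness of the finite-quotient condition is the content of Claim~1 in Lemma~\ref{l.generic_finite_quotient}: from a surjection $\varphi:K\to F$ one takes the partition of $K$ into cosets of $\kernel\varphi$, sets $\delta$ to be the minimal distance between distinct cosets, and for $L$ within $\delta/4$ of $K$ defines $\psi:L\to F$ via nearest coset; showing that $\psi$ is a homomorphism requires a further triangle-inequality argument. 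The openness underlying your total-disconnectedness condition (Lemma~\ref{l.generic_tot_disconnected}) reuses this same machinery. Both assertions are true and your sketch could be completed along exactly these lines, but as written the proof does not establish them. By contrast, your $F_\sigma$ decomposition of the non-torsion-free groups via the compact sets $U_{n,k}$ is correct and matches the paper's argument.
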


\begin{remark}\label{r.dualZ}
In Remark~\ref{r.unique_abelian} we showed that $\wtilde A=\bigoplus_{p\in P}\int[p^\infty]^{(\nat)}$ is the unique, up to isomorphism, countable abelian group that is divisible, torsion and contains every finite abelian group up to isomorphism. It is well-known that the additive group $Z_p$ of $p$-adic integers is the Pontryagin dual of $\int[p^\infty]$ (see \cite[25.2]{HEWITT_ROSS}); therefore it follows by Proposition~\ref{p.pontryagin} (2) that $Z$ is the Pontryagin dual of $\wtilde A$. Hence by Proposition~\ref{p.pontryagin} (1), (4), (6) and (7) $Z$ is the unique CMA group (up to isomorphism) such that it is torsion-free, totally disconnected and every finite abelian group occurs as its quotient.

Thus for proving Theorem~\ref{t.isomclass_abelian} it suffices to verify that these three properties hold for the generic $K\in\cals(\TT)$, which will be done in the following three lemmas.
\end{remark}

\begin{lemma}\label{l.generic_finite_quotient}
For the generic $K \in \cals(\TT)$ every finite abelian group occurs as a quotient of $K$.
\end{lemma}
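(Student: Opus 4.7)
The plan is to show that for each finite abelian group $F$, the set
$$\mathcal{Q}_F \defeq \{K \in \cals(\TT) : F \text{ is a quotient of } K\}$$
is open and dense in $\cals(\TT)$. Since there are only countably many finite abelian groups up to isomorphism, $\bigcap_F \mathcal{Q}_F$ is then a dense $G_\delta$ set, hence comeager by the Baire category theorem.

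For density, fix $K \in \cals(\TT)$ and $\eps > 0$. By Proposition~\ref{p.finite_dense} I can find a finite subgroup $L$ supported on coordinates $\{1,\ldots,N\}$ with $d_H(L, K) < \eps/2$, where $N$ may be taken as large as desired. Since $F$ is a product of finite cyclic groups I may realize a copy $F'$ of $F$ as a finite subgroup of $\circle^k \subseteq \TT$ placed in coordinates $N+1,\ldots,N+k$. For $N$ sufficiently large, every element of $F'$ has $d$-norm at most $\sum_{n>N} 2^{-n-1} < \eps/2$, so $K' \defeq L \oplus F' \in \cals(\TT)$ satisfies $d_H(K', L) < \eps/2$ and hence $d_H(K', K) < \eps$. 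The projection onto the $F'$-factor witnesses $F$ as a quotient of $K'$.

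For openness, I use Pontryagin duality. By Proposition~\ref{p.pontryagin}~(1), $F$ is a quotient of $K$ iff $\what F \cong F$ embeds into $\what K$. Writing $\what K[p^a] \defeq \{\chi \in \what K : p^a \chi = 0\}$ and decomposing $F$ into $p$-primary parts $F = \bigoplus_p F_p$ with cyclic factors of orders $p^{a_{p,1}} \geq p^{a_{p,2}} \geq \ldots$, a standard invariant-factor calculation for abelian $p$-groups shows that $F_p \hookrightarrow \what K$ iff
$$\dim_{\mathbb{F}_p}\bigl(\what K[p^a]\big/\what K[p^{a-1}]\bigr) \geq n_{p,a} \defeq \bigl|\{j : a_{p,j} \geq a\}\bigr|$$
for every $a \geq 1$. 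Dualizing the short exact sequence $0 \to p^{a-1}K/p^a K \to K/p^a K \to K/p^{a-1}K \to 0$ identifies $\what K[p^a]/\what K[p^{a-1}]$ with $\what{p^{a-1}K/p^a K}$, whose $\mathbb{F}_p$-dimension equals $\log_p[p^{a-1}K:p^a K]$. Hence $F$ is a quotient of $K$ iff $[p^{a-1}K:p^a K] \geq p^{n_{p,a}}$ for every prime $p$ and every $a \geq 1$, a finite conjunction of conditions.

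It remains to show that each set $\{K : [p^{a-1}K : p^a K] \geq p^r\}$ is open. By Lemma~\ref{l.continuous_image} the map $K \mapsto p^j K$ is continuous on $\cals(\TT)$ for every $j$, since $x \mapsto p^j x$ is a continuous endomorphism of $\TT$. Suppose $[p^{a-1}K_0 : p^a K_0] \geq p^r$ and pick $k_1,\ldots,k_{p^r} \in p^{a-1}K_0$ pairwise distinct modulo $p^a K_0$. For any $K_n \to K_0$, continuity of the image map yields $k_i^{(n)} \in p^{a-1}K_n$ with $k_i^{(n)} \to k_i$. If two of them were congruent modulo $p^a K_n$ for infinitely many $n$, say $k_i^{(n)} - k_j^{(n)} = p^a y_n$ with $y_n \in K_n$, then compactness of $\TT$ together with Hausdorff convergence of $K_n$ would produce a subsequential limit $y \in K_0$ with $k_i - k_j = p^a y \in p^a K_0$, contradicting distinctness. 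The main obstacle is precisely this translation of the algebraic condition ``$F \hookrightarrow \what K$'' into an open condition on $K$ via the Pontryagin dictionary and the lower semicontinuity of $K \mapsto [p^{a-1}K : p^a K]$; once these are in place, comeagerness follows at once from the Baire category theorem.
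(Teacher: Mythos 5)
Your proposal is correct, but the openness argument takes a genuinely different route from the paper's. The paper shows $\calb_A$ is open by a direct topological argument: given a surjective continuous homomorphism $\varphi:K\to A$, it decomposes $K$ into the finitely many cosets of $\ker\varphi$, sets $\delta$ to be their minimal mutual distance, and shows that any $L$ with $d_H(L,K)<\delta/4$ inherits a surjective continuous homomorphism onto $A$ by sending each $x\in L$ to the label of the unique coset whose $\delta/4$-neighborhood contains $x$. Your proof instead dualizes: ``$F$ is a quotient of $K$'' becomes ``$F\hookrightarrow\what K$'', which you unpack via the Ulm-invariant embedding criterion for finite abelian $p$-groups into finitely many conditions $[p^{a-1}K:p^aK]\ge p^{n_{p,a}}$, and then you establish lower semicontinuity of these indices by a compactness/Hausdorff-convergence argument. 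Both routes are sound; I checked the duality step ($\what K[p^a]/\what K[p^{a-1}]\cong\what{p^{a-1}K/p^aK}$ via the dual of the exact sequence $0\to p^{a-1}K/p^aK\to K/p^aK\to K/p^{a-1}K\to 0$), the embedding criterion (equivalent to the dominance criterion $a_i\le b_i$ on invariant factors), and the semicontinuity argument, and all are correct. The trade-off: the paper's argument is more elementary and self-contained (no duality or structure theory needed), and the same perturbation-of-homomorphisms technique is reused in the proof of Lemma~\ref{l.generic_tot_disconnected}; your argument is more algebraic and isolates a clean quantitative criterion for when a finite abelian group is a quotient. Your density argument is essentially the same as the paper's.
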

\begin{proof}
Let 
$$\calb_1:=\{K \in \cals(\TT): \text{every finite abelian group $A$ is a quotient of $K$}\}.$$
Recall that for compact groups quotients and continuous homomorphic images coincide, hence
$$\calb_1=\bigcap_{A \text{ is finite abelian}}\{K\in \cals(\TT):\ \exists\ \varphi:K \to A\text{ surjective continuous homomorphism}\}.$$
Since there are countably many finite abelian groups up to isomorphism, it sufficies to show that for a fixed finite abelian group $A$ the following subset of $\cals(\TT)$ is dense open:
$$\calb_A:=\{K\in \cals(\TT):\ \exists\ \varphi:K \to A\text{ surjective continuous homomorphism}\}.$$
\textbf{Claim 1.} The set $\calb_A$ is open.

Let $\{a_1,a_2,\ldots, a_n\}$ be the underlying set of $A$. For a fixed $K\in \calb_A$ let $\varphi:K\to A$ be a surjective continuous homomorphism and let $H:=\kernel\varphi$. Then we have a partition $K=\bigcup_{i=1}^n (k_i+H)$, where $k_i\in K$ with $\varphi(k_i)=a_i$. Note that $k_i+H$ is compact for each $i$ and let 
$$\delta:= \min\{\dist(k_i+H, k_j+H):\ 1\leq i,j\leq n,\ i\neq j\}>0.$$
We claim that $B_{d_H}{\left(K,\frac{\delta}{4}\right)}\subseteq \calb_A$. To show this, for any $L\in B_{d_H}{\left(K,\frac{\delta}{4}\right)}$ we have to find a surjective continuous homomorphism $\psi:L\to A$. By the choice of $\delta$, if we define $L_i:=L\cap (k_i+H)_{\frac{\delta}{4}}$, then $\bigcup_{i=1}^n L_i$ is a decomposition of $L$ into disjoint nonempty open sets. Let us define $\psi$ as 
$\psi(x)=a_i$ if and only if $x\in L_i$. The continuity of $\psi$ is clear and the surjectivity follows from the fact that each $L_i$ is nonempty. Now we have to show that $\psi$ is a homomorphism.

Pick any $l_i\in L_i$ and $l_j\in L_j$. Then there exist $x_i\in k_i+H$ and $x_j\in k_j+H$ such that $d_{\TT}(x_i,l_i)<\frac{\delta}{4}$, $d_{\TT}(x_j,l_j)<\frac{\delta}{4}$, $\psi(l_i)=\varphi(x_i)=a_i$ and $\psi(l_j)=\varphi(x_j)=a_j$. Note that
$$d_{\TT}(x_i+x_j, l_i+l_j)\leq d_{\TT}(x_i+x_j,x_i+l_j)+d_{\TT}(x_i+l_j,l_i+l_j)=d_{\TT}(x_j,l_j)+d_{\TT}(x_i,l_i)<\frac{\delta}{2},$$
therefore $l_i+l_j\in (x_i+x_j+H)_{\frac{\delta}{2}}$. Now it follows from $L\subseteq K_{\frac{\delta}{4}}$ and the choice of $\delta$ that $l_i+l_j\in (x_i+x_j+H)_{\frac{\delta}{4}}$. Thus, by the definition of $\psi$, we have $\psi(l_i+l_j)=\varphi(x_i+x_j)=a_i+a_j$. We conclude that $\psi$ is a homomorphism, which completes the proof of Claim 1.

\textbf{Claim 2.} The set $\calb_A$ is dense in $\cals(\TT)$.

By Proposition~\ref{p.finite_dense} it sufficies to approximate elements of $\calf$. Fix $F\in \calf$ and $\eps>0$. Let $N\in\nat$ be such that $\frac{1}{2^N}<\eps$ and $\{1,\ldots,N\}\supseteq\supp(F)$. 
Since $A$ is a CMA group, there is an embedding
$$\varphi:A\embeds\underbrace{\{0\}\times\ldots\times\{0\}}_{N\text{ times}}\times \circle\times \circle\ldots.$$
Now the subgroup $H$ of $\TT$ generated by $F$ and $\varphi(A)$ is isomorphic to $F\times A$. Therefore, we have a natural surjective continuous homomorphism $H\to A$ and also $d_H(H,F)<\eps$ because $\pi_{[N]}(H)=\pi_{[N]}(F)$ and $\frac{1}{2^N}<\eps$.
\end{proof}

\begin{remark}\label{r.cma_not_simple}
It follows immediately from Lemma~\ref{l.generic_finite_quotient} that the generic $K\in\cals(\TT)$ is not simple.
\end{remark}

\begin{lemma}\label{l.generic_torsion_free}
The generic $K \in \cals(\TT)$ is torsion-free.
\end{lemma}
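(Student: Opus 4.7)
The plan is to represent the set of torsion-free groups as a countable intersection of open dense subsets of the Polish space $\cals(\TT)$ and apply the Baire category theorem. For integers $n\geq 2$ and $m\geq 1$ let
\[ A_{n,m}\defeq\{x\in\TT:nx=0\text{ and }d(x,0)\geq 1/m\},\qquad \calu_{n,m}\defeq\{K\in\cals(\TT):K\cap A_{n,m}=\emptyset\}. \]
Since $A_{n,m}$ is compact, a standard subsequence argument (if $K_j\to K$ in Hausdorff distance and $x_j\in K_j\cap A_{n,m}$, pass to a convergent subsequence) shows that $\cals(\TT)\setminus\calu_{n,m}$ is closed, so each $\calu_{n,m}$ is open. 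Moreover, $\bigcap_{n,m}\calu_{n,m}$ is precisely the set of torsion-free elements of $\cals(\TT)$, because any nonzero torsion element $x$ satisfies $d(x,0)\geq 1/m$ for some $m$. It therefore suffices to show each $\calu_{n,m}$ is dense, and for this it is enough to prove the stronger statement that torsion-free compact subgroups are dense in $\cals(\TT)$: a torsion-free $L$ meets $\{x:nx=0\}$ only at $0$, which misses $A_{n,m}$.

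By Proposition~\ref{p.finite_dense}, the density task reduces to approximating each $F\in\calf$ by torsion-free compact subgroups. Given such an $F$ with $\supp(F)\subseteq\{1,\ldots,N\}$ and $\delta>0$, I proceed as follows. By the fundamental theorem of finite abelian groups, write $F$ as an internal direct sum $\langle g_1\rangle\oplus\ldots\oplus\langle g_r\rangle$ with $g_i\in F$ of prime-power order $p_i^{a_i}$. Choose $M>N$ with $2^{-M}<\delta$ and pairwise disjoint infinite subsets $I_1,\ldots,I_r$ of $\{M+1,M+2,\ldots\}$, enumerated as $I_i=\{j_{i,1}<j_{i,2}<\ldots\}$. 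For each $i$, define $g_i'\in\TT$ to equal $g_i$ on the first $N$ coordinates, to take the value $1/p_i^{a_i+k-1}$ at coordinate $j_{i,k}$, and to vanish in every other coordinate. Set $L_i\defeq\overline{\langle g_i'\rangle}$ and $L\defeq L_1+\ldots+L_r$, which is a compact subgroup of $\TT$.

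Two verifications complete the argument. The Hausdorff estimate $d_H(L,F)<\delta$ is easy: $\pi_{[N]}(L)=F$ by construction, every element of $L$ vanishes in coordinates $N<j\leq M$, and the coordinates $j>M$ contribute at most $\sum_{j>M}2^{-j}\cdot\tfrac12<2^{-M}<\delta$ in the metric $d$. The core step, and the main obstacle, is torsion-freeness of $L$. First, the projection $\pi_{I_i}|_{L_i}$ is an isomorphism onto the standard realization of $Z_{p_i}$ in $\circle^{I_i}$: surjectivity is density of $\int$ in $Z_{p_i}$, and injectivity uses that if $n_k g_i'\to x$ with $\pi_{I_i}(x)=0$, then $n_k\to 0$ in $Z_{p_i}$, so $n_k\bmod p_i^{a_i}\to 0$, which kills the first $N$ coordinates of $x$ as well. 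Since $\widehat{Z_{p_i}}=\int[p_i^\infty]$ is divisible, Proposition~\ref{p.pontryagin}(6) shows each $L_i\cong Z_{p_i}$ is torsion-free. Finally, the disjointness of the $I_i$'s makes $L_1+\ldots+L_r$ an internal direct sum: any relation $x_1+\ldots+x_r=0$ with $x_i\in L_i$ projects to $\pi_{I_j}(x_j)=0$ for each $j$, giving $x_j=0$ by injectivity. Hence $L$ is a direct sum of torsion-free groups, and is torsion-free.
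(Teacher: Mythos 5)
Your proof is correct. The $G_\delta$ part is essentially the same as the paper's: the paper writes the set of torsion-free groups as $\bigcap_{n,k}\{K:K\cap B(0,\tfrac{1}{k})^c\cap f_n^{-1}(\{0\})=\emptyset\}$ with $f_n(x)=nx$, which is exactly your $A_{n,m}=f_n^{-1}(\{0\})\cap B(0,\tfrac1m)^c$ in different notation.

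The density argument, however, takes a genuinely different route. The paper approximates a finite $F\in\calf$ by a \emph{pullback}: it embeds $Z=\prod_p (Z_p)^{\nat}$ into coordinates beyond $N$, fixes a surjection $\theta:\varphi(Z)\to F$, and takes $K=\{(f,x)\in F\times\varphi(Z):\theta(x)=f\}$, which is isomorphic to $\varphi(Z)$ (hence torsion-free) and projects onto $F$ in the first $N$ coordinates. You instead decompose $F$ into prime-power cyclic summands $\langle g_i\rangle$ of order $p_i^{a_i}$ and inflate each one separately into a copy of $Z_{p_i}$ by attaching a compatible chain $1/p_i^{a_i},1/p_i^{a_i+1},\ldots$ along a disjoint block $I_i$ of fresh coordinates, then take the closed sum. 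Both constructions produce a torsion-free group whose $\pi_{[N]}$-image equals $F$, hence close to $F$ in Hausdorff distance. The paper's version is slicker and reuses the generic group $Z$ wholesale; your version is more hands-on, makes the copy of $Z_{p_i}$ completely explicit inside $\TT$, and avoids having to invoke the surjection $\theta:Z\to F$ as a black box (in exchange for the bookkeeping with the blocks $I_i$, the verification that $\pi_{I_i}|_{L_i}$ is injective, and the internal-direct-sum check). All of these details you supply correctly; in particular the injectivity argument (if $\pi_{I_i}(x)=0$, then $n_k\to0$ $p_i$-adically, so eventually $p_i^{a_i}\mid n_k$, killing the first $N$ coordinates too) is the right key observation.
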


\begin{proof}
Let 
$$\calb_2:=\{K \in \cals(\TT): K \text{ is torsion-free} \}.$$
We will show that $\calb_2$ is dense $G_\delta$ in $\cals(\TT)$.

\textbf{Claim 1.} The set $\calb_2$ is $G_\delta$ in $\cals(\TT)$.

Let us define  $f_n:\TT\rightarrow \TT$ as $f_n: x\mapsto n\cdot x$. Clearly, $f_n$ is continuous and 
$$\calb_2=\left\{K\in \cals(\TT):\ \forall n,k\in \nat\  \left(0\notin f_n{\left[K\setminus B{\left(0,\tfrac{1}{k}\right)}\right]}\right)\right\}=$$
$$=\bigcap_{n,k\in\nat} \underbrace{\left\{K\in \cals(\TT):\ K\cap \underbrace{B{\left(0,\tfrac{1}{k}\right)}^c\cap f_n^{-1}(\{0\})}_{\text{closed}}=\emptyset\right\}}_{\text{open}}.$$
\textbf{Claim 2.} The set $\calb_2$ is dense in $\cals(\TT)$.

By Proposition~\ref{p.finite_dense} it suffices to approximate elements of $\calf$. Fix any $F\in \calf$ and $\eps>0$. Let $N\in\nat$ be such that $\frac{1}{2^N}<\eps$ and $\{1,\ldots,N\}\supseteq\supp(F)$. As in Theorem~\ref{t.isomclass_abelian}, let $Z=\prod_{p\in P}(Z_p)^\nat$, which is torsion-free (recall Remark~\ref{r.dualZ}). Since $Z$ is a CMA group, there is an embedding
$$\varphi:Z\embeds\underbrace{\{0\}\times\ldots\times\{0\}}_{N\text{ times}}\times \circle\times \circle\times\ldots.$$
As we have already observed in Remark~\ref{r.dualZ}, every finite abelian group is a quotient of $Z$. Since $Z$ and $\varphi(Z)$ are isomorphic, there is a surjective continuous homomorphism $\theta:\varphi(Z)\to F$.
Now consider the following map:
$$\Phi: F\times \varphi(Z)\to F\times F,\quad \Phi(f,x):=(f,\theta(x)).$$
(More precisely, instead of $F\times \varphi(Z)$ we should consider the subgroup of $\TT$ generated by $F$ and $\varphi(Z)$ as in the end of the proof of Lemma~\ref{l.generic_finite_quotient}. Here we avoid such rigor for notational simplicity.) Clearly, $\Phi$ is a surjective continuous homomorphism. Then for the diagonal subgroup $\tilde{F}=\{(f,f):f \in F\}\leq F\times F$ we have $K:=\Phi^{-1}(\tilde{F})\in\cals(\TT)$. We claim that $K$ is torsion-free and $d_H(K,F)<\eps$.

Assume that $(f,x)\in K\setminus\{(0,0)\}$ (with $f\in F$ and $x\in\varphi(Z)$) and $n(f,x)=(nf,nx)=(0,0)$ with some $n\in\nat$. Since $x\in \varphi(Z)$, and $\varphi(Z)$ is torsion-free, we get $x=0$. On the other hand, $(f,x)\in K$ means $\theta(x)=f$, hence we have $f=0$. Thus $K$ is torsion-free.

Note that $\pi_{[N]}(K)=\pi_{[N]}(F)$, therefore $d_H(F,K)<\eps$. This completes the proof of Claim 2.
\end{proof}

\begin{lemma}\label{l.generic_tot_disconnected}
The generic $K \in \cals(\TT)$ is totally disconnected.
\end{lemma}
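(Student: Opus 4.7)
The plan is to prove that
$$\calb_3 := \{K \in \cals(\TT) : K \text{ is totally disconnected}\}$$
is a dense $G_\delta$ subset of $\cals(\TT)$, whence the lemma will follow from the Baire category theorem.

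First I would record the following characterization: a compact abelian group $K$ is totally disconnected if and only if for every $n \in \nat$ the group $K$ contains an open subgroup of diameter less than $1/n$. The ``only if'' direction is van Dantzig's theorem (every neighborhood of the identity in a locally compact totally disconnected group contains an open compact subgroup), and the ``if'' direction follows from the fact that in a compact group the connected component $K_0$ of $0$ lies in every open (hence clopen) subgroup, so $K_0 = \{0\}$ as soon as the open subgroups of $K$ have arbitrarily small diameter. Setting $\calu_n := \{K \in \cals(\TT) : K \text{ has an open subgroup of diameter } < 1/n\}$, we obtain $\calb_3 = \bigcap_{n \in \nat} \calu_n$.

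To show that $\calu_n$ is open I would mimic the perturbation argument from Lemma~\ref{l.generic_finite_quotient}. Fix $K \in \calu_n$ and an open subgroup $H \leq K$ of diameter $\eta < 1/n$. Decompose $K = \bigsqcup_{i=1}^{m}(k_i + H)$ with $k_1 = 0$ (each coset has diameter $\eta$ by translation invariance of the metric on $\TT$) and put $\delta := \min_{i \neq j} \dist(k_i + H,\, k_j + H) > 0$. For $L \in \cals(\TT)$ with $d_H(L, K) < \rho$, where $\rho := \tfrac{1}{4}\min(\delta,\, 1/n - \eta)$, set $L_i := L \cap (k_i + H)_\rho$. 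Exactly as in Lemma~\ref{l.generic_finite_quotient}, the $L_i$ form a clopen partition of $L$ and the rule $x \mapsto k_i + H$ for $x \in L_i$ defines a surjective continuous homomorphism $L \to K/H$, whose kernel $L_1$ is therefore a clopen subgroup of $L$ with diameter at most $\eta + 2\rho < 1/n$; hence $L \in \calu_n$. Thus $\calb_3$ is $G_\delta$.

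Density of $\calb_3$ comes essentially for free: every $F \in \calf$ is finite and hence discrete in the subspace topology from $\TT$, so $\calf \subseteq \calb_3$, and Proposition~\ref{p.finite_dense} then yields density of $\calb_3$. The main obstacle is the perturbation step verifying openness of $\calu_n$; the delicate point is not extracting some clopen subgroup of $L$ (which is already the content of Lemma~\ref{l.generic_finite_quotient}) but ensuring that this subgroup inherits a sufficiently small diameter from $H$, which forces $\rho$ to be chosen smaller than both $\delta/4$ and $(1/n - \eta)/4$ rather than merely smaller than $\delta/4$.
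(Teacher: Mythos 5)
Your proposal is correct and follows essentially the same approach as the paper: both express (a comeager subset of) the totally disconnected groups as $\bigcap_n\calu_n$, where the paper's $\calu_n$ (via a continuous homomorphism to a finite abelian group with kernel of diameter $<1/n$) is the same set as your $\calu_n$ (via an open subgroup of diameter $<1/n$), both establish openness of $\calu_n$ by the perturbation argument from Lemma~\ref{l.generic_finite_quotient} with the additional choice of the parameter small enough to preserve the diameter bound, and both get density from $\calf\subseteq\calb_3$ and Proposition~\ref{p.finite_dense}. The only cosmetic difference is that you observe $\calb_3=\bigcap_n\calu_n$ exactly (giving that $\calb_3$ itself is $G_\delta$), whereas the paper is content to prove $\calb_3'\subseteq\calb_3$ is dense $G_\delta$; this changes nothing for the lemma.
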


\begin{proof}
Let 
$$\calb_3:=\{K \in \cals(\TT):\ K \text{ is totally disconnected} \}.$$
We need to show that for the generic $K\in\cals(\TT)$ the singleton $\{0\}$ is a connected component. Notice that whenever $\varphi:K\to F$ is a continuous homomorphism to a finite abelian group, the connected component of $0$ is a subset of $\kernel\varphi$. Let us define
$$\calb_3^{'}:=\bigcap_{n\in\nat}\underbrace{\left\{K \in \cals(\TT):\ \begin{array}{ll}
    \text{there exists a finite abelian group $F_n$ and a continuous} \\
    \text{ homomorphism $\varphi_n:K\to F_n$ with} \diam(\kernel(\varphi_n))<\frac{1}{n}
\end{array}
\right\}}_{U_n}.$$
Now we have $\calb_3^{'}\subseteq\calb_{3}$, hence it suffices to prove that $\calb_3^{'}$ is dense $G_\delta$ in $\cals(\TT)$. The denseness follows directly from Proposition~\ref{p.finite_dense} and the fact that every finite group is in $\calb_3^{'}$.

\textbf{Claim.} The set $U_n$ is open.

Pick some $K\in U_n$. Fix a finite abelian group $F_n$ and a continuous homomorphism $\varphi_n:K\to F_n$ with $\diam(\kernel(\varphi_n))<\frac{1}{n}$. As in the proof of Claim 1 of Lemma~\ref{l.generic_finite_quotient}, for any given $\eps>0$ if $L\in\cals(\TT)$ is suitably close to $K$, then $\varphi_n$ gives rise to a continuous homomorphism $\psi_n:L\rightarrow F_n$ with $\kernel(\psi_n)\subseteq \left(\kernel(\varphi_n)\right)_\eps$. Then $\eps:=\frac{1}{2}\left(\frac{1}{n}-\diam(\kernel(\varphi_n))\right)$ witnesses that $U_n$ is open.
\end{proof}

\subsection{Countable abelian groups revisited}\label{ss.revisited}

In Section~\ref{ss.abelian_case} we introduced a  Polish space $\cala$ of the (infinite) countable abelian groups. Now we present a different approach and discuss its relation to both $\cala$ and $\cals(\TT)$. As in Section~\ref{ss.abelian_case}, we do not use additive notation \textbf{in elements of} $\cala$.

\begin{notation}
Recall that $\zdo$ is the countably infinite direct sum of $\int$ with itself. Let $\cala'$ denote the set of all subgroups of $\zdo$ with the subspace topology inherited from the Cantor space $2^{\zdo}$.
\end{notation}

It is easy to check that the subspace $\cala'$ is closed in $2^{\zdo}$, thus $\cala'$ is Polish. (Compare this to \cite[page 16-17]{GOLDBRING}.) Since every countable abelian group is a quotient of $\zdo$, we may use $\cala'$ to define generic properties among countable abelian groups:

\begin{defi}\label{d.generic_property}
A group property $\calp$ is $\cala'$-\textbf{generic} if the set
$$\{H\in\cala':\ \zdo/H\text{ is of property }\calp\}$$
is comeager in $\cala'$. (Here by a group property $\calp$ among countable abelian groups we mean an isomorphism-invariant subset of the set of all quotients of $\int^{(\nat)}$.)
\end{defi}

The following theorem is a special case of \cite[Theorem~14.]{FISHER_GARTSIDE_1}.

\begin{theorem}[Fisher--Gartside]\label{t.fisherhom}
There is a canonical homeomorphism between $\cala'$ and $\cals(\TT)$.
\end{theorem}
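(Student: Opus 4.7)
The plan is to take $\Psi : \cala' \to \cals(\TT)$ defined by $\Psi(H) \defeq H^\perp = \{t \in \TT : \chi(t) = 0 \text{ for every } \chi \in H\}$ and show that this is the desired canonical homeomorphism. By the annihilator form of Pontryagin duality applied to $\TT$ (whose dual is $F = \zdo$), the map $\Psi$ is a bijection between subgroups of $F$ and closed, hence compact, subgroups of $\TT$. Both spaces are compact Hausdorff: $\cala'$ is closed in the Cantor space $2^{\zdo}$, and $\cals(\TT)$ is compact by Definition~\ref{d.compact_subgroups}. Since a continuous bijection between compact Hausdorff spaces is automatically a homeomorphism, it suffices to verify that $\Psi$ is continuous.

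Suppose $H_n \to H$ in $\cala'$, which by definition means that for every $\chi \in F$ we have $\chi \in H_n \iff \chi \in H$ eventually. I will verify $H_n^\perp \to H^\perp$ in the Hausdorff metric through the two Painlev\'e--Kuratowski inclusions $\limsup_n H_n^\perp \subseteq H^\perp$ and $H^\perp \subseteq \liminf_n H_n^\perp$. The first is a direct character argument: if $t_{n_k} \in H_{n_k}^\perp$ with $t_{n_k} \to t$ and $\chi \in H$ is arbitrary, then $\chi \in H_{n_k}$ eventually, hence $\chi(t_{n_k}) = 0$ eventually, and continuity of $\chi$ forces $\chi(t) = 0$, so $t \in H^\perp$. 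The main obstacle will be the reverse inclusion $H^\perp \subseteq \liminf_n H_n^\perp$, since even in the subcase $H \subseteq H_n$ (where $H_n^\perp \subseteq H^\perp$) the annihilators can collapse to strictly smaller subgroups, and one must exploit that the ``new'' generators of $H_n$ have unbounded support.

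I plan to handle this obstacle via Haar measures. Let $\mu_n$ and $\mu$ denote the normalized Haar measures of the compact subgroups $H_n^\perp$ and $H^\perp$, regarded as Borel probability measures on $\TT$. For every $\chi \in F$, viewed as a complex-valued function on $\TT$, write $\widehat{\mu}(\chi)$ for the $\mu$-integral of $\chi$; the standard orthogonality of characters on compact abelian groups then gives $\widehat{\mu_n}(\chi) = \mathbf{1}_{\chi \in H_n}$ and $\widehat{\mu}(\chi) = \mathbf{1}_{\chi \in H}$, since a character has average $1$ over a compact subgroup $K$ exactly when $\chi|_K$ is trivial (equivalently, $\chi \in K^\perp$) and has average $0$ otherwise. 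The hypothesis $H_n \to H$ in $\cala'$ is thus precisely the pointwise convergence $\widehat{\mu_n}(\chi) \to \widehat{\mu}(\chi)$ for every $\chi \in F$, and since by Stone--Weierstrass the complex algebra generated by $F$ (the trigonometric polynomials) is uniformly dense in $C(\TT, \mathbb{C})$, this upgrades to weak-$*$ convergence $\mu_n \to \mu$. The open-set form of the Portmanteau theorem then gives $\liminf_n \mu_n(U) \geq \mu(U)$ for every open $U \subseteq \TT$, so if $t \in H^\perp = \supp \mu$ and $U$ is any neighborhood of $t$, then $\mu_n(U) > 0$ eventually, forcing $H_n^\perp \cap U \neq \emptyset$ eventually. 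This yields $H^\perp \subseteq \liminf_n H_n^\perp$ and completes the argument.
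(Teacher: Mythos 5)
Your proposal is correct, and it is genuinely more than the paper offers: the paper's ``Sketch of proof'' merely identifies the map as the annihilator $\Ann$ (your $\Psi$ is its inverse, $H \mapsto H^\perp$) and defers the actual verification to \cite[Theorem~14]{FISHER_GARTSIDE_1}. You supply a complete argument, and all the steps check out: the bijection follows from the double-annihilator identity (which needs no closure on the $F$-side since $F$ is discrete); both spaces are compact Hausdorff, so one-sided continuity suffices; the $\limsup$ inclusion is a direct character argument; and the $\liminf$ inclusion via Haar measures is sound. The Fourier-coefficient identity $\widehat{\mu_n}(\chi)=\mathbf 1_{\chi\in H_n}$ is exactly Schur orthogonality together with $(H_n^\perp)^\perp=H_n$; pointwise convergence of Fourier coefficients plus Stone--Weierstrass and uniform total mass gives weak-$*$ convergence; Portmanteau gives $\liminf\mu_n(U)\geq\mu(U)$; and $\supp\mu=H^\perp$ because Haar measure has full support. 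One fact you use implicitly but correctly is that in a compact metric space Painlev\'e--Kuratowski convergence of nonempty compacta is equivalent to Hausdorff convergence.

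What each approach buys: the paper's route is to cite the reference, which (in Fisher--Gartside) works directly with the Chabauty/Fell and Vietoris topologies via basic open sets. Your Haar-measure route is conceptually clean and immediately suggests generalizations (e.g.\ to the nonabelian compact case via representation-theoretic orthogonality), but it imports more machinery than strictly necessary. A lighter self-contained alternative for the $\liminf$ step: if $t\in H^\perp$ but some subsequence satisfies $d(t,H_{n_k}^\perp)\ge\eps$, pass to a further subsequence with $H_{n_k}^\perp\to K$ in $d_H$ (so $K\in\cals(\TT)$, $K\subseteq H^\perp$, $t\notin K$); pick $\chi\in K^\perp\setminus H$, note $\chi\notin H_{n_k}$ eventually, so $\chi|_{H_{n_k}^\perp}$ is a nontrivial character and hence takes a value at distance $\geq 1/3$ from $0$ at some $s_k\in H_{n_k}^\perp$ (the image is a nontrivial closed subgroup of $\circle$); a limit point $s$ of the $s_k$ lies in $K$ but has $\chi(s)\neq 0$, a contradiction. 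Either way, your proof is correct and fills in a gap the paper delegates to a reference.
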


\begin{proof}[Sketch of proof]
\begin{notation}
Let $H$ be a fixed locally compact abelian group, and $\what{H}$ its Pontryagin dual. If $G$ is a closed subgroup of $H$, then the $\textbf{annihilator}$ of $G$ is defined as:
$$\Ann(G):=\{\chi\in\what{H}:\ \forall a \in G\ (\chi(a)=0)\}.$$
Notice that $\Ann(G)$ is subgroup of $\what{H}$.
\end{notation}

Recall that any CMA group $G$ can be embedded into $\TT$, hence the annihilator of $G$ is a subgroup of $\zdo$, that is, $\Ann(G)\in \cala'$. Thus the canonical map, which turns out to be a homeomorphism between $\cala'$ and $\cals(\TT)$, is $\Ann$ itself.
\end{proof}

It is well-known that for a subgroup $G\leq \TT$ the dual $\what G$ is isomorphic to $\what{\TT}/\Ann(G)$ (see \cite[Theorem~2.1.2.]{RUDIN}). We conclude that the property of being isomorphic to the group $\bigoplus_{p\in P}\int[p^\infty]^{(\nat)}$, which is the dual of $Z$ in Theorem~\ref{t.isomclass_abelian}, is $\cala'$-generic. Recall that the isomorphism class of this group is comeager in $\cala$. It is natural to ask whether there is a simple topological connection between $\cala$ and $\cala'$ that explains this phenomenon.

\begin{notation}\label{n.supp_zdo}
For $x\in\zdo$ let
$$\supp (x)\defeq\{n\in\nat:\ x(n)\neq 0\}.$$
\end{notation}

\begin{notation}
Let $e_i$ be the following element of $\zdo$: $e_i(k)=1$ if $k=i$ and $e_i(k)=0$ otherwise.
\end{notation}

We will show that $\cala$ naturally embeds into $\cala'$.
Let us pick any $A \in \cala$. Recall that elements of $\zdo$ are $\nat\to\int$ functions with finite support. Now let
$$\Phi(A)\defeq\left\{x\in\zdo:\ \prod_{n\in\nat}n^{x(n)}=1\text{ holds in }\wtilde A\right\}.$$
Clearly, $\Phi(A)$ is a subgroup of $\zdo$, that is, $\Phi(A)\in\cala'$.

\begin{prop}\label{l.2abel_hom}
The map $\Phi:\cala\to\cala'$ is an embedding.
\end{prop}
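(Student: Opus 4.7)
The plan is to show that $\Phi$ is a topological embedding by verifying three things in order: well-definedness, continuity, and that $\Phi$ is an open map onto its image (which, together with injectivity, gives a homeomorphism $\cala \to \Phi(\cala)$).

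First I would reinterpret $\Phi(A)$ as the kernel of a natural homomorphism. Since $\zdo$ is the free abelian group on the set $\{e_n : n \in \nat\}$, for each $A \in \cala$ there is a unique homomorphism $\pi_A:\zdo\to\wtilde A$ with $\pi_A(e_n)=n$ for all $n\in\nat$ (using that the underlying set of $\wtilde A$ is $\nat$ and $\wtilde A$ is abelian). A direct check shows $\Phi(A) = \kernel \pi_A$, so in particular $\Phi(A)$ is a subgroup of $\zdo$, i.e.\ an element of $\cala'$. The key identity that drives the rest of the proof is that for all $i,j,l\in\nat$,
$$e_i+e_j-e_l \in \Phi(A) \iff i\cdot j = l \text{ in }\wtilde A \iff A(i,j)=l.$$

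For continuity, basic subbasic opens of $2^{\zdo}$ have the form $\{H : x \in H\}$ or $\{H : x \notin H\}$ with $x \in \zdo$. Since $x$ has finite support, the condition $\prod_n n^{x(n)}=1$ in $\wtilde A$ only involves finitely many entries of the multiplication table $A$, so by Proposition~\ref{p.basis2} the preimage under $\Phi$ is clopen in $\cala$. Injectivity is an immediate consequence of the key identity: if $\Phi(A)=\Phi(B)$ and $A(i,j)=l$, then $e_i+e_j-e_l\in\Phi(A)=\Phi(B)$ forces $B(i,j)=l$, hence $A=B$.

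For the final step I would show that $\Phi$ is open onto its image by checking, for a basic clopen $\calb=\{A\in\cala:\ \forall i,j\leq k\ (i\cdot j=m_{i,j})\}$, the equality
$$\Phi(\calb)=\Phi(\cala)\cap\bigcap_{i,j\leq k}\bigl\{H\in\cala':\ e_i+e_j-e_{m_{i,j}}\in H\bigr\}.$$
The inclusion $\subseteq$ is immediate from the key identity. For $\supseteq$, if $H=\Phi(A)$ lies in the right-hand side, then $e_i+e_j-e_{m_{i,j}}\in\Phi(A)$ gives $A(i,j)=m_{i,j}$ for all $i,j\leq k$, so $A\in\calb$. The right-hand side is open in $\Phi(\cala)$, so $\Phi$ sends a basis of $\cala$ to open subsets of $\Phi(\cala)$, completing the proof that $\Phi$ is an embedding. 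No step looks like a real obstacle here; the only subtle point is making peace with the overloaded role of $\nat$ as both the underlying set of $\wtilde A$ and the index set for coordinates of $\zdo$, which is exactly what the identification $\pi_A(e_n)=n$ handles cleanly.
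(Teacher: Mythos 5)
Your proof is correct and follows essentially the same route as the paper's: the same key identity $e_i+e_j-e_l\in\Phi(A)\iff A(i,j)=l$ drives both injectivity and openness, and continuity is handled via Proposition~\ref{p.basis2} on the subbasic opens $U_x, V_x$, exactly as in the paper. Your reinterpretation of $\Phi(A)$ as $\kernel\pi_A$ is a clean way to justify well-definedness, and your explicit description of $\Phi(\calb)$ is a slightly more concrete rendering of the paper's observation that $\{\Phi^{-1}(U_x)\}$ is a subbasis, but neither constitutes a different argument.
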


\begin{proof}
\textbf{Injectivity.} Pick distinct elements $A,B\in \cala$. If we use the original notations introduced in the beginning of Subsection~\ref{ss.multip_tables}, this means that there exist $k,l,m\in\nat$ such that $m=A(k,l)\neq B(k,l)$. By the definition of $\Phi$ it follows that $e_k+e_l-e_m\in \Phi(A)$ and $e_k+e_l-e_m\notin\Phi(B)$. 

\textbf{Continuity and openness.} Sets of the form
$$U_x=\{A'\in \cala': x\in A'\}\quad \text{ and }\quad V_x=\{A'\in \cala': x\notin A'\}$$
with $x\in\zdo$ constitute a subbasis in $\cala'$.
By the definition of $\Phi$ we have
$$\Phi^{-1}(U_x)=\left\{A\in \cala:\ \prod_{n\in\nat} n^{x(n)}=1 \text{ in } \wtilde{A}\right\},$$
and
$$\Phi^{-1}(V_x)=\left\{A\in \cala:\ \prod_{n\in\nat} n^{x(n)}\neq 1 \text{ in } \wtilde{A}\right\}.$$
By Proposition~\ref{p.basis2} these sets are open, hence $\Phi$ is continuous. Again, by Proposition~\ref{p.basis2} the set $\{\Phi^{-1}(U_x):\ x\in\zdo\}$ is a subbasis for $\cala$. Thus $\Phi(\Phi^{-1}(U_x))= U_x\cap\Phi(\cala)$ and the injectivity of $\Phi$ shows that $\Phi$ is open (onto its range).
\end{proof}

\begin{prop}
The set $\Phi(\cala)$ is nowhere dense in $\cala'$.
\end{prop}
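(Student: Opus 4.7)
The natural approach is to produce, inside every nonempty basic open set of $\cala'$, a nonempty open subset disjoint from $\Phi(\cala)$; this shows $\Phi(\cala)$ is nowhere dense. The key structural observation is that the surjection $\zdo \to \wtilde A$ sending $e_n \mapsto n$ maps the distinct elements $e_n, e_m$ (for $n \neq m$) to the distinct elements $n, m$ of $\nat = |\wtilde A|$; hence $e_n - e_m \notin \Phi(A)$ for every $A \in \cala$ and every $n \neq m$. Consequently, any subgroup of $\zdo$ containing even one element of the form $e_n - e_m$ with $n \neq m$ lies outside $\Phi(\cala)$.

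Fix a nonempty basic open set
$$N = \{H \in \cala' : x_1, \ldots, x_k \in H,\ y_1, \ldots, y_\ell \notin H\},$$
with $x_i, y_j \in \zdo$. Since each $x_i, y_j$ has finite support, I would pick distinct $n, m \in \nat$ lying outside $\bigcup_i \supp(x_i) \cup \bigcup_j \supp(y_j)$, and consider the subgroup $H_N := \langle x_1, \ldots, x_k, e_n - e_m \rangle$ of $\zdo$. The verification that $H_N \in N$ goes as follows: trivially $x_i \in H_N$; and if some $y_j = \sum_i a_i x_i + b(e_n - e_m)$ with $a_i, b \in \int$, then evaluating at the $n$-th coordinate gives $0 = b$, so $y_j = \sum_i a_i x_i$. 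But this expression lies in every subgroup containing $x_1, \ldots, x_k$, hence in every $H \in N$, contradicting $y_j \notin H$ for any such $H$ (and such $H$ exists since $N$ is nonempty).

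Therefore $V := \{H \in N : e_n - e_m \in H\}$ is a nonempty open subset of $N$ (nonempty because $H_N \in V$; open because it is the intersection of $N$ with a subbasic clopen set), and by the initial observation $V \cap \Phi(\cala) = \emptyset$. I do not anticipate any real obstacle in carrying this out; the only mild technical point is the freedom to push $n$ and $m$ past the finitely many coordinates appearing in the data defining the chosen basic open set, which is exactly what makes the construction work.
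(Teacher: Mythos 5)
Your proof is correct and takes essentially the same route as the paper's: both arguments produce an explicit ``poison'' element of $\zdo$ that can never lie in $\Phi(A)$ and that can be freely adjoined to the generators appearing in a basic open set (by pushing its support past the finitely many relevant coordinates), so that the corresponding subbasic clopen set witnesses nowhere density. The only cosmetic difference is the choice of poison element: you use $e_n-e_m$ (relying on distinct naturals being distinct in $\wtilde A$), while the paper uses $e_N$ for $N\geq 2$ (relying on $1$ being the identity of $\wtilde A$).
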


\begin{proof}
Observe that $e_i\notin A'$ if $i\geq 2$ and $A'\in \Phi(\cala)$, therefore we have $\Phi(\cala)\subset \bigcap_{i=2}^\infty V_{e_i}$. Then it suffices to prove that $\bigcup_{i=2}^\infty U_{e_i}$ is dense in $\cala'$. Pick any nonempty basic clopen set $U\defeq U_{x_1}\cap\ldots\cap U_{x_n}\cap V_{y_1}\cap\ldots\cap V_{y_k}$ in $\cala'$. Let $N\in\nat$ be such that $N\notin(\bigcup_{i=1}^n \supp(x_i))\cup(\bigcup_{j=1}^k\supp(y_j))\cup\{1\}$. Now it is easy to check that $U\cap U_{e_N}\neq\emptyset$.
\end{proof}

\textbf{Summary.} The two approaches ($\cala$ and $\cala'$) give the same comeager isomorphism class: the class of $\bigoplus_{p\in P}\int[p^\infty]^{(\nat)}$. Although there is a natural embedding $\Phi:\cala\embeds\cala'$, this does not explain the coincidence since $\Phi(\cala)$ is nowhere dense in $\cala'$.

\section{Compact metrizable groups}\label{s.compact_metrizable_groups}

In this section, we study generic properties of compact metrizable groups. We will see a more obscure picture than in the abelian case, however, we believe our results constitute a promising starting point for interesting future research.

We prove that both in terms of connectedness and torsion elements the generic group is heterogeneous.

\subsection{The space of compact metrizable groups}

It is well-known that every compact group $G$ can be embedded into a direct product $\prod_{j\in J}U(n_j)$, where $n_j\in\nat$ and $U(n_j)$ is a unitary group \cite[Corollary~2.29.]{HOFMANN_MORRIS}. This relies on the fact that unitary representations separate points of $G$. If $G$ is metrizable, then it is easy to check that even countably many representations separate points of $G$, therefore every compact metrizable group can be embedded into a countable direct product of unitary groups.

Also, note that $U(n)$ is embeddable into the special unitary group $SU(k)$ for $k>n$. We conclude that every compact metrizable group is embeddable into $\su\defeq\prod_{n=1}^\infty SU(n)$.

Let $d_n$ be a biinvariant metric on $SU(n)$ with $d_n\leq 1$ for every $n\in\nat$. We shall use the metric $d(x,y)\defeq\sum_{n=1}^\infty\frac{1}{2^n}d_n(x(n),y(n))$ on $\su$ and the Hausdorff metric $d_{n,H}$ (resp. $d_H$) induced by $d_n$ (resp. $d$) on $\cals(SU(n))$ (resp. $\cals(\su)$).

Clearly, every closed subgroup of $\su$ is a compact metrizable group. Thus we may view $(\cals(\su),d_H)$ as \textit{the space of compact metrizable groups}.

\begin{obs}\label{o.hconv}
Let $G\in\cals(\su)$ and $G_n\in\cals(\su)$ for every $n\in\nat$. Notice that if $\pi_{[n]}(G_n)=\pi_{[n]}(G)$ for every $n\in\nat$, then $G_n\hconv G$.
\end{obs}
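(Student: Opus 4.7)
The plan is to show directly that $d_H(G_n, G) \leq \frac{1}{2^{n-1}}$, which immediately yields the claim. The only substantive ingredient is a uniform bound on how far a point $x \in \su$ can be from its truncation $\pi_{[n]}(x)$, and the hypothesis $\pi_{[n]}(G_n) = \pi_{[n]}(G)$ will let us pair up elements of $G$ and $G_n$ that agree after truncation.

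First I would record the tail estimate. For any $x \in \su$, since $\pi_{[n]}(x)$ replaces coordinates $n+1, n+2, \dots$ by identities and $d_k \leq 1$, we have
$$d(x, \pi_{[n]}(x)) = \sum_{k=n+1}^\infty \frac{1}{2^k} d_k(x(k), 1_{SU(k)}) \leq \sum_{k=n+1}^\infty \frac{1}{2^k} = \frac{1}{2^n}.$$
In particular, this bound is independent of $x$, and independent of whether we are working in $G$ or $G_n$.

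Next I would use the matching hypothesis. Fix $n \in \nat$ and take $g \in G$. Then $\pi_{[n]}(g) \in \pi_{[n]}(G) = \pi_{[n]}(G_n)$, so there exists $g' \in G_n$ with $\pi_{[n]}(g') = \pi_{[n]}(g)$. Applying the tail estimate twice and the triangle inequality,
$$d(g, g') \leq d(g, \pi_{[n]}(g)) + d(\pi_{[n]}(g'), g') \leq \frac{1}{2^n} + \frac{1}{2^n} = \frac{1}{2^{n-1}}.$$
The symmetric argument starting from $g' \in G_n$ produces an element of $G$ within the same distance. Hence $G \subseteq (G_n)_{1/2^{n-1}}$ and $G_n \subseteq G_{1/2^{n-1}}$, so $d_H(G_n, G) \leq \frac{1}{2^{n-1}}$.

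Since $\frac{1}{2^{n-1}} \to 0$, we conclude $G_n \hconv G$. There is no genuine obstacle here—the only thing to be careful about is keeping track of the factor $\frac{1}{2^n}$ in the metric $d$ and using that the diameters $d_k$ are uniformly bounded by $1$, which is exactly what makes the tail sum summable.
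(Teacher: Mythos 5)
Your proof is correct and is exactly the natural argument the paper has in mind when it labels this a mere observation and asks the reader to ``notice'' it without supplying a proof: the tail bound $d(x,\pi_{[n]}(x))\leq 2^{-n}$ coming from $d_k\leq 1$ and the geometric weights, combined with the matching hypothesis and the triangle inequality, gives $d_H(G_n,G)\leq 2^{-(n-1)}\to 0$. There is nothing to add or correct.
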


\subsection{Approximating \texorpdfstring{$SU(n)$}{}}

In this subsection we prove a lemma for later use. It also raises Question~\ref{q.approx_proper_sgp} and Question~\ref{q.isolated}, which are quite interesting in their own.

\begin{lemma}\label{l.sun_proper_subgroups}
The group $SU(n)$ cannot be approximated by closed proper subgroups. That is, $SU(n)$ is an isolated point in $\cals(SU(n))$.
\end{lemma}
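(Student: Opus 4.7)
The plan is to establish a uniform positive lower bound for $d_{n,H}(H, SU(n))$ as $H$ ranges over all proper closed subgroups of $SU(n)$; isolatedness then follows immediately. I would proceed by contradiction, assuming there is a sequence $(H_k)_{k \in \nat}$ of proper closed subgroups with $d_{n,H}(H_k, SU(n)) \to 0$.

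The first step is a \emph{monotonicity} observation: if $H \leq H' \leq SU(n)$ are closed subgroups, then
$$d_{n,H}(H, SU(n)) = \sup_{g \in SU(n)} d_n(g, H) \geq \sup_{g \in SU(n)} d_n(g, H') = d_{n,H}(H', SU(n)),$$
since enlarging the subgroup can only decrease the distance from any point of $SU(n)$. By Zorn's Lemma, each $H_k$ is contained in some maximal proper closed subgroup $M_k \leq SU(n)$, and monotonicity then gives $d_{n,H}(M_k, SU(n)) \to 0$.

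The second step exploits the fact that $d_n$ is biinvariant, so $d_{n,H}(\cdot, SU(n))$ is conjugation-invariant. I would invoke the classical fact that $SU(n)$ has only finitely many conjugacy classes of maximal proper closed subgroups: for the connected ones this is Dynkin's classification of maximal subalgebras of the Lie algebra of $SU(n)$, and for the disconnected ones (in particular the finite primitive ones) this is part of the classification of maximal finite subgroups of compact simple Lie groups. Letting $\eps_0 := \min_{[M]} d_{n,H}(M, SU(n))$ with the minimum ranging over a finite set of representatives of the conjugacy classes, each term is strictly positive since every $M$ is a proper closed subset of $SU(n)$, and the minimum of finitely many positive reals is positive. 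This contradicts $d_{n,H}(M_k, SU(n)) \to 0$.

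The main obstacle is invoking the classification of maximal closed subgroups. An alternative, more self-contained route uses the normalized Haar measure $\mu$ on $SU(n)$: since $d_{n,H}(H_k, SU(n)) < \eps_k \to 0$ forces $SU(n) \subseteq (H_k)_{\eps_k}$ and hence $\mu((H_k)_{\eps_k}) = 1$, one would derive a contradiction by establishing a uniform bound $\mu((H)_\eps) = O(\eps^c)$ for some $c > 0$ depending only on $n$, valid for every proper closed $H$. For the identity component $H^0$, which has dimension at most $n^2-2$ (since $SU(n)$ is connected), the tubular neighborhood formula yields such a bound. The delicate part is controlling the index $[H : H^0]$: Jordan's theorem on finite linear groups handles the case of finite $H$ by placing it inside finitely many cosets of a maximal torus, and containment in the normalizer of $H^0$ handles the general case since normalizers have finite component groups modulo their centralizers. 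This approach avoids the classification but requires more geometric work.
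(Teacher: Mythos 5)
The logical skeleton of your first approach is sound, but the ``by Zorn's Lemma'' step conceals the hard content of the lemma. To apply Zorn to the poset of proper closed subgroups of $SU(n)$ you must verify the chain condition: given a chain $(H_\alpha)$ of proper closed subgroups, the natural candidate upper bound is $\overline{\bigcup_\alpha H_\alpha}$, and you need to know that this closure is still a \emph{proper} subgroup. That is not automatic; ruling out a dense chain union requires, in essence, the same ingredients as the rest of the proof. (Sketch: the identity components $(H_\alpha)^\circ$ are nested connected Lie subgroups, so their dimension stabilizes and hence they eventually coincide with a fixed $L$; each later $H_\alpha$ normalizes $L$, so a dense union forces $L\triangleleft SU(n)$, and since $\mathfrak{su}(n)$ is simple and $H_\alpha$ is proper this makes $L$ trivial; the chain is then eventually finite, and a dense union of finite subgroups contradicts Turing's theorem because $SU(n)$ is connected nonabelian.) So invoking Zorn as if it were free is a genuine gap: the claim that every proper closed subgroup of a compact Lie group lies in a maximal one is a theorem, not a formality, and proving it here would essentially recapitulate the paper's argument.

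There is a second soft spot in the same approach: the claim that $SU(n)$ has finitely many conjugacy classes of maximal proper closed subgroups is true but your justification -- Dynkin for the connected ones, the classification of maximal finite subgroups for the rest -- misses the maximal subgroups that are neither connected nor finite (e.g.\ normalizers of maximal tori, or more generally $N(K^\circ)$ for various $K$). The correct statement does hold, but it is itself a nontrivial consequence of Mostow-type finiteness theorems, which is precisely the tool the paper uses more directly. The paper's route avoids maximal subgroups altogether: starting from a hypothetical approximating sequence $K_i$, it applies Cartan and Mostow to pass (after extracting a subsequence) to the case where the normalizers $N((K_i)^\circ)$ are mutually conjugate; biinvariance of the metric then forces $N((K_i)^\circ)=SU(n)$, so $(K_i)^\circ$ is normal, hence trivial by simplicity of $\mathfrak{su}(n)$, hence each $K_i$ is finite, and Turing's theorem finishes. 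Your second (Haar-measure) sketch is a genuinely different and appealing idea, but as you note the uniform control of $[H:H^\circ]$ across all proper closed $H$ is delicate; as written it is not a proof.
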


To prove this we need the following three theorems.

\begin{theorem}[{Cartan, \cite[Theorem~20.10]{LEE}}]\label{t.cartan}
 A closed subgroup of a Lie group is a Lie subgroup.
\end{theorem}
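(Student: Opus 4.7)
My plan is to follow the classical proof of Cartan's closed subgroup theorem, building the Lie subalgebra of $H$ directly inside the Lie algebra $\mathfrak{g}$ of $G$ and then transferring everything back via the exponential map.

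First, I would define the candidate tangent space at the identity:
\[
\mathfrak{h} \defeq \{X \in \mathfrak{g} : \exp(tX) \in H \text{ for all } t \in \real\}.
\]
It is immediate that $\mathfrak{h}$ is closed under scalar multiplication. The crux of this first step is to prove that $\mathfrak{h}$ is also closed under addition (and under the Lie bracket). For this I would use the Trotter product formula
\[
\exp(t(X+Y)) = \lim_{n\to\infty} \bigl(\exp(tX/n)\exp(tY/n)\bigr)^n,
\]
together with the fact that $H$ is closed in $G$: since each factor on the right lies in $H$ whenever $X, Y \in \mathfrak{h}$, so does the limit. A similar commutator-limit identity handles the bracket. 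Thus $\mathfrak{h}$ is a Lie subalgebra of $\mathfrak{g}$.

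Next I would establish the local picture: there is an open neighborhood $U$ of $0$ in $\mathfrak{g}$ on which $\exp$ is a diffeomorphism onto its image, and I claim that shrinking $U$ sufficiently one has $\exp(U) \cap H = \exp(U \cap \mathfrak{h})$. The inclusion $\supseteq$ is clear from the definition of $\mathfrak{h}$. The reverse inclusion is the key technical lemma and the hardest step of the argument. Suppose it failed; then one could produce a sequence $h_n \in H$ with $h_n \to 1_G$, $h_n \neq 1_G$, and $X_n \defeq \exp^{-1}(h_n) \notin \mathfrak{h}$. Write $X_n = Y_n + Z_n$ with $Y_n \in \mathfrak{h}$ and $Z_n$ in a fixed linear complement of $\mathfrak{h}$ in $\mathfrak{g}$. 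One shows $Z_n \neq 0$ for large $n$ and, passing to a subsequence, that $Z_n/\|Z_n\|$ converges to some unit vector $Z \in \mathfrak{g}$ lying in the complement. The standard trick is then to consider $\lfloor t/\|Z_n\|\rfloor$-th powers of $\exp(Z_n)$ (which sit in $H$ by closedness, after absorbing the $\exp(-Y_n)$ factor using that $\mathfrak{h}$ already exponentiates into $H$) and pass to the limit to deduce $\exp(tZ) \in H$ for all $t \in \real$. This forces $Z \in \mathfrak{h}$, contradicting its choice.

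Once the local identification $\exp(U \cap \mathfrak{h}) = \exp(U) \cap H$ is in hand, $\exp|_{U \cap \mathfrak{h}}$ is a slice chart for $H$ at the identity. Translating this chart by elements of $H$ via left multiplication (which are diffeomorphisms of $G$ and homeomorphisms of $H$ onto itself) gives a slice chart at every point of $H$, so $H$ is an embedded submanifold of $G$. Since $H$ is already a subgroup and the group operations of $G$ restrict to $H$, smoothness of multiplication and inversion on $H$ is inherited from $G$, and $H$ is a Lie subgroup. I expect the main obstacle, as indicated above, to be the quantitative argument producing $Z \in \mathfrak{h}$ from the sequence $h_n$: it requires careful bookkeeping of error terms in the Baker--Campbell--Hausdorff expansion to justify that iterated powers of $\exp(Z_n)$ converge to $\exp(tZ)$, and it is precisely here that the closedness hypothesis on $H$ is used in an essential way.
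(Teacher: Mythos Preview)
The paper does not prove this theorem at all: it is simply quoted as a classical result with a reference to Lee's textbook, and is used as a black box in the proof of Lemma~\ref{l.sun_proper_subgroups}. So there is nothing in the paper to compare your argument against.

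Your outline is the standard proof of Cartan's theorem and is essentially correct. One small technical remark: in the key lemma you decompose $X_n = Y_n + Z_n$ \emph{linearly} in $\mathfrak{g}$ and then speak of ``absorbing the $\exp(-Y_n)$ factor'' to get $\exp(Z_n)\in H$. Since $\exp(-Y_n)\exp(Y_n+Z_n)\neq\exp(Z_n)$ in general, what you actually obtain is an element $g_n=\exp(-Y_n)h_n\in H$ with $\exp^{-1}(g_n)=Z_n+O(\|Y_n\|\,\|Z_n\|)$, and it is this perturbed sequence whose normalized limit equals $Z$. This works, but the bookkeeping is cleaner if instead of the linear splitting you use the local diffeomorphism $(Y,Z)\mapsto\exp(Y)\exp(Z)$ from $\mathfrak{h}\times\mathfrak{h}^\perp$ to a neighborhood of $1_G$; then $h_n=\exp(Y_n)\exp(Z_n)$ gives $\exp(Z_n)=\exp(-Y_n)h_n\in H$ on the nose, and no BCH error terms enter the argument. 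Either route leads to the same contradiction $Z\in\mathfrak{h}$.
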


\begin{theorem}[{Mostow, \cite[Theorem~7.1.]{MOSTOW}}]\label{t.nonconjugate}
In a compact Lie group, any set of connected Lie subgroups whose normalizers are mutually non-conjugate is finite.
\end{theorem}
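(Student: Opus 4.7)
The plan is to argue by contradiction: suppose $(H_k)_{k\in\nat}$ is a sequence of proper closed subgroups of $SU(n)$ with $H_k\hconv SU(n)$ in $\cals(SU(n))$, and derive a contradiction. The strategy has two stages. First, Cartan's and Mostow's theorems, together with a compactness argument, will force (after passing to a subsequence) each $H_k$ to be finite. Second, Jordan's classical theorem on finite subgroups of $GL(n,\mathbb{C})$ will contradict the Hausdorff density of the $H_k$. The case $n=1$ is trivial since $SU(1)=\{I\}$, so I fix $n\geq 2$.

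\textbf{Stage 1 (reduction to finite subgroups).} By Theorem~\ref{t.cartan} each $H_k$ is a Lie subgroup with identity component $H_k^\circ$, and $H_k$ normalizes $H_k^\circ$, so $H_k\subseteq N(H_k^\circ)$. By Theorem~\ref{t.nonconjugate} applied in $SU(n)$, only finitely many conjugacy classes of normalizers occur among the connected Lie subgroups $\{H_k^\circ\}_{k\in\nat}$. Passing to a subsequence, we may assume $N(H_k^\circ)=g_k N_0 g_k^{-1}$ for a fixed closed subgroup $N_0\leq SU(n)$ and elements $g_k\in SU(n)$; compactness further lets us arrange $g_k\to g$. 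The map $h\mapsto hN_0 h^{-1}$ from $SU(n)$ into $\cals(SU(n))$ is continuous (conjugation is uniformly continuous on the compact set $N_0$), so $g_k N_0 g_k^{-1}\hconv gN_0 g^{-1}$. Combined with $H_k\subseteq g_k N_0 g_k^{-1}$ and $H_k\hconv SU(n)$, this forces $gN_0 g^{-1}\supseteq SU(n)$, hence $N_0=SU(n)$. Therefore $H_1^\circ$ — and, by an identical argument applied to any index of the subsequence, every $H_k^\circ$ — is normal in $SU(n)$. But the only closed connected normal subgroups of $SU(n)$ are $\{I\}$ and $SU(n)$, and $H_k$ is proper, so $H_k^\circ=\{I\}$ and each $H_k$ is finite.

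\textbf{Stage 2 (finite subgroups cannot be Hausdorff-dense).} By Jordan's theorem, every finite subgroup of $GL(n,\mathbb{C})$ contains an abelian normal subgroup of index at most a constant $J(n)$. Pick $A_k\trianglelefteq H_k$ abelian with $[H_k:A_k]\leq J(n)$ and write $H_k=\bigsqcup_{i=1}^{m_k}g_{k,i}A_k$. Passing to a subsequence we may assume $m_k=m$ is constant, $g_{k,i}\to g_i$ in $SU(n)$ for each $i\leq m$, and $A_k\hconv\bar A$ in $\cals(SU(n))$. Since $\cals(SU(n))$ is closed in $\calk(SU(n))$, $\bar A$ is a closed subgroup; it is abelian because commutativity is preserved under Hausdorff limits (by continuity of multiplication). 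Using left-invariance of $d_n$ and the standard inequality $d_{n,H}\left(\bigcup_i X_i,\bigcup_i Y_i\right)\leq \max_i d_{n,H}(X_i,Y_i)$, we obtain $SU(n)=\lim_k H_k=\bigcup_{i=1}^m g_i\bar A$. But any abelian closed subgroup of $SU(n)$ lies in a maximal torus, so $\dim\bar A\leq n-1<n^2-1=\dim SU(n)$; a finite union of translates of such a lower-dimensional closed Lie subgroup has empty interior (equivalently, Haar measure zero) in $SU(n)$, contradicting the displayed equality.

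The most delicate point is the compactness-and-continuity step in Stage 1: one must carefully verify that conjugation by a converging sequence of elements produces Hausdorff convergence of the conjugated subgroups, and that subgroup inclusions pass to Hausdorff limits. Both facts are standard but worth spelling out. Jordan's theorem is an external hammer in Stage 2, but it makes the finite-subgroup case essentially mechanical; without it, one would need a more delicate direct argument about the possible structure of finite subgroups of $SU(n)$.
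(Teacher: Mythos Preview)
Your proposal does not prove the stated theorem. Theorem~\ref{t.nonconjugate} is Mostow's structural result about compact Lie groups, which the paper merely cites from \cite{MOSTOW} and does not prove. What you have written is a proof of Lemma~\ref{l.sun_proper_subgroups} (that $SU(n)$ is isolated in $\cals(SU(n))$), and you explicitly invoke Theorem~\ref{t.nonconjugate} as an ingredient in Stage~1. Read as a proof of Mostow's theorem, the argument is circular; read literally, it addresses the wrong statement.

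If your intended target was Lemma~\ref{l.sun_proper_subgroups}, then Stage~1 is essentially the paper's argument (Cartan plus Mostow plus simplicity of $\mathfrak{su}(n)$ forcing $H_k^\circ=\{I\}$), with only a cosmetic difference: the paper observes that biinvariance of $d_n$ makes all conjugate normalizers equidistant from $SU(n)$, while you extract a convergent subsequence of conjugating elements; both work. Stage~2 is a genuine alternative. The paper cites Turing's theorem (Theorem~\ref{t.turing}) to rule out approximation of a connected nonabelian Lie group by finite subgroups in one line. You instead use Jordan's theorem and a dimension count; this is correct in $SU(n)$ because commuting unitary matrices are simultaneously diagonalizable, so the abelian limit $\bar A$ lies in a maximal torus of dimension $n-1<n^2-1$. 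Your route is slightly more hands-on but avoids the somewhat obscure Turing reference; the paper's route is shorter.
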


The following is a special case of  \cite[Theorem~2.]{TURING}.

\begin{theorem}[Turing]\label{t.turing}
Let $G$ be a connected Lie group with a compatible left invariant metric $d$. If $G$ can be approximated by finite subgroups in the sense that for every $\eps>0$ there is a finite subgroup $F\leq G$ that is an $\eps$-net in $G$, then $G$ is compact and abelian.
\end{theorem}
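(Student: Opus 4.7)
The plan is to argue by contradiction. Suppose there is a sequence of closed proper subgroups $H_k \leq SU(n)$ with $H_k \hconv SU(n)$. The case $n=1$ is trivial since $SU(1)$ is the trivial group, so I assume $n \geq 2$ and aim to produce a contradiction by combining Theorems~\ref{t.cartan}, \ref{t.nonconjugate} and \ref{t.turing} with the classical simplicity of $SU(n)$.

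First I would invoke Theorem~\ref{t.cartan} to deduce that every $H_k$ is a Lie subgroup, so its identity component $H_k^0$ is a closed connected Lie subgroup of $SU(n)$. Since $H_k$ is the union of finitely many cosets of $H_k^0$ and is proper in $SU(n)$, the component $H_k^0$ is itself a \emph{proper} connected Lie subgroup (else $H_k \supseteq H_k^0 = SU(n)$). Next, Theorem~\ref{t.nonconjugate} says that the normalizers $N_{SU(n)}(H_k^0)$ fall into only finitely many conjugacy classes; by pigeonhole, after passing to a subsequence I may assume all of them lie in one conjugacy class. Choosing $g_k \in SU(n)$ that conjugates $N_{SU(n)}(H_k^0)$ onto a fixed closed subgroup $L \leq SU(n)$, and replacing each $H_k$ by $g_k H_k g_k^{-1}$, I may assume $N_{SU(n)}(H_k^0) = L$ for every $k$. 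Biinvariance of $d_n$ is essential here: conjugation in $SU(n)$ is an isometry, so the replaced sequence still Hausdorff-converges to $SU(n)$. Since $H_k$ always normalizes its own identity component, one obtains $H_k \leq L$ for every $k$.

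The finishing step splits into two cases. If $L$ is a proper closed subgroup of $SU(n)$, then $H_k \subseteq L$ combined with closedness of $L$ forces the Hausdorff limit $SU(n)$ into $L$, a contradiction. If $L = SU(n)$, then each $H_k^0$ is a closed connected normal subgroup of $SU(n)$; since $SU(n)$ is simple as a connected Lie group for $n \geq 2$, its only closed connected normal subgroups are $\{1\}$ and itself, so properness of $H_k^0$ forces $H_k^0 = \{1\}$. Each $H_k$ is then discrete and compact, hence finite, and the hypothesis $H_k \hconv SU(n)$ means that for every $\varepsilon > 0$ the finite group $H_k$ is eventually an $\varepsilon$-net in $SU(n)$. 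Theorem~\ref{t.turing} then forces $SU(n)$ to be abelian, contradicting $n \geq 2$.

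The main obstacle will be executing the Mostow reduction cleanly: verifying that the conjugated sequence $g_k H_k g_k^{-1}$ still Hausdorff-converges to $SU(n)$ (which is where biinvariance of $d_n$ is crucial), and setting up the dichotomy on the common normalizer $L$ so that simplicity of $SU(n)$ does the remaining work in the harder case $L = SU(n)$. The applications of Cartan and Turing are otherwise fairly mechanical.
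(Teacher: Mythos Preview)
Your proof is not a proof of Theorem~\ref{t.turing} at all: you invoke Theorem~\ref{t.turing} in your final step, so you cannot be proving it. What you have written is a proof of Lemma~\ref{l.sun_proper_subgroups} (that $SU(n)$ is isolated in $\cals(SU(n))$), which \emph{uses} Turing's theorem as a black box. The paper does not prove Theorem~\ref{t.turing} either; it cites it from \cite{TURING}.

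Assuming the intended target was Lemma~\ref{l.sun_proper_subgroups}, your argument is correct and essentially identical to the paper's. The only cosmetic difference is in the Mostow reduction: you conjugate the $H_k$ so that all normalizers equal a fixed subgroup $L$, and then split into the cases $L\neq SU(n)$ and $L=SU(n)$; the paper instead keeps the $N_i$ mutually conjugate, observes $K_i\leq N_i\to SU(n)$, and uses biinvariance to conclude $d_{n,H}(N_i,SU(n))$ is constant along the sequence, hence zero. Both routes rely on the same biinvariance of $d_n$ and arrive at the same conclusion that every $(K_i)^\circ$ is normal in $SU(n)$; from there the simplicity of $\mathfrak{su}(n)$ and Turing's theorem finish the job exactly as you wrote.
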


\begin{remark}
Turing formulates his theorem without the assumption of connectedness, but this condition \emph{cannot be omitted} as the example of a finite nonabelian discrete group shows. Actually, the proof of Turing works (only) if the group $G$ is connected. 
\end{remark}

\begin{defi}\label{d.identity_component}
The \textbf{identity component} of a topological group $G$ is the connected component of its identity element. We denote it by $G^\circ$. It is well-known that $G^\circ$ is a closed normal subgroup of $G$.
\end{defi}

\begin{proof}[Proof of Lemma~\ref{l.sun_proper_subgroups}]
The trivial group $SU(1)$ has no proper subgroup, hence we may assume $n\geq 2$. Suppose that $(K_i)_{i\in \nat}\leq SU(n)$ is a sequence of proper compact subgroups such that $K_i\overset{d_{n,H}}{\longrightarrow} SU(n)$. Notice that $(K_i)^\circ$ is closed and therefore, by Theorem~\ref{t.cartan}, it is a connected Lie subgroup of $SU(n)$ for every $i\in\nat$.

Consider the sequence of normalizers $N_i\defeq N_{SU(n)}((K_i)^\circ)$. By Theorem~\ref{t.nonconjugate} we may assume that the $N_i$ are mutually conjugate. Since $(K_i)^\circ\nsub K_i$, we have $K_i\leq N_i\leq SU(n)$. Thus $N_i\overset{d_{n,H}}{\longrightarrow}SU(n)$ as well. On the other hand, $d_n$ is biinvariant, hence the distance $d_H(SU(n), N_i)$ is the same for each $i\in\nat$. We conclude that $N_i=SU(n)$ for every $i\in\nat$. In other words, every $(K_i)^\circ$ is normal in $SU(n)$.

The Lie algebra of a connected normal subgroup of $SU(n)$ is an ideal of the Lie algebra $\mathfrak{su}(n)$ (see \cite[page~128]{HELGASON}). Since $\mathfrak{su}(n)$ is simple (see \cite[page~13]{PFEIFER}), we conclude that every $(K_i)^\circ$ is the trivial group. Therefore, $K_i$ is finite for every $i\in\nat$.

By Theorem~\ref{t.turing} this contradicts the fact that $SU(n)$ is a connected nonabelian group (see \cite[Proposition~13.11]{HALL} for the connectedness).
\end{proof}

Although for our purposes Lemma~\ref{l.sun_proper_subgroups} suffices, it raises natural questions:

\begin{question}\label{q.approx_proper_sgp}
Which compact Lie groups can be approximated by proper subgroups in the Hausdorff metric?
\end{question}

Or more generally:

\begin{question}\label{q.isolated}
Let $G$ be a Lie group. Which compact subgroups $K$ of $G$ can be approximated by other compact subgroups of $G$ that are non-conjugate to $K$? (It follows easily that in a compact connected Lie group $G$ any nonnormal compact subgroup can be approximated by its conjugates.)
\end{question}

\begin{remark}
We remark that Question~\ref{q.isolated} have been recently answered in \cite{CSIKOS}.
\end{remark}

\subsection{Generic topological properties}

We will need the following well-known theorems. 

\begin{theorem}\label{t.nss}
\cite[Chapter~II, Exercise~B.5.]{HELGASON} Every Lie group has the ''no small subgroup'' property. That is, in a Lie group $G$ there exists a neighborhood $U$ of the identity such that $U$ contains no nontrivial subgroups of $G$.
\end{theorem}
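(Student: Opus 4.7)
My plan is to use the exponential map $\exp : \mathfrak{g} \to G$ of the Lie group $G$, together with the elementary fact that $\exp$ is a local diffeomorphism at $0 \in \mathfrak{g}$. Fix a norm $\|\cdot\|$ on the finite-dimensional vector space $\mathfrak{g}$, and choose $r > 0$ such that $\exp$ restricts to a diffeomorphism from the open ball $B(0,r) \subseteq \mathfrak{g}$ onto an open neighborhood of $1_G$; this is possible by the inverse function theorem. I will then show that the open neighborhood
$$U \defeq \exp(B(0, r/2))$$
contains no nontrivial subgroup of $G$.

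Suppose for contradiction that $H \leq G$ is a subgroup with $H \subseteq U$ and $H \neq \{1_G\}$. Pick $h \in H$ with $h \neq 1_G$, and write $h = \exp(X)$ with $X \in B(0, r/2)$ and $X \neq 0$. Because $\exp$ restricted to the line $\mathbb{R} X$ is the one-parameter subgroup through $X$, we have $h^n = \exp(nX)$ for every $n \in \nat$, at least as an identity in $G$. Let $n_0$ be the smallest positive integer with $\|n_0 X\| \geq r/2$ (such $n_0$ exists since $X \neq 0$). Then $\|(n_0 - 1) X\| < r/2$, so
$$\|n_0 X\| \leq \|(n_0 - 1) X\| + \|X\| < r/2 + r/2 = r,$$
whence $n_0 X \in B(0, r)$. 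On the other hand, $h^{n_0} \in H \subseteq U = \exp(B(0, r/2))$, so $h^{n_0} = \exp(Y)$ for some $Y \in B(0, r/2) \subseteq B(0, r)$. Since $\exp$ is injective on $B(0, r)$ and $\exp(n_0 X) = h^{n_0} = \exp(Y)$, we conclude $n_0 X = Y$, contradicting $\|n_0 X\| \geq r/2 > \|Y\|$ (or more precisely the fact that $Y \in B(0, r/2)$ while $\|n_0 X\| \geq r/2$).

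The only nontrivial ingredient is the existence of the exponential map together with its local diffeomorphism property at the origin, which I would simply invoke from standard Lie theory; everything else is a short iteration-out-of-a-neighborhood argument. I do not foresee any real obstacle, since the statement is a classical fact and the proof above is essentially the standard one (cf.~Helgason).
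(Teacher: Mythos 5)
Your proof is correct and is the standard exponential-map argument for the no-small-subgroups property of Lie groups; the paper does not give a proof but simply cites this as an exercise in Helgason, and the argument you supply is precisely the one that exercise has in mind. One small remark: it may be worth noting explicitly that $n_0 \geq 2$ (because $X \in B(0,r/2)$ forces $\|X\| < r/2$), so that the minimality step $\|(n_0-1)X\| < r/2$ is justified.
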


\begin{theorem}\label{t.vandantzig}
\cite[Theorem~1.34.]{HOFMANN_MORRIS} For a locally compact group $G$ the following are equivalent:

(1) The group $G$ is totally disconnected.

(2) The identity of $G$ has a neighborhood basis consisting of compact open subgroups.
\end{theorem}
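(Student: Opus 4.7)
The plan is to dispatch $(2) \implies (1)$ directly, and then split $(1) \implies (2)$ into a topological preliminary and a ``small subgroup'' construction. For $(2) \implies (1)$, any open subgroup is automatically closed (its complement is a union of open cosets), so each member of the hypothesized basis at $e$ is clopen. The connected component of $e$ lies inside every clopen neighborhood of $e$, hence inside the intersection of the basis, which is $\{e\}$; by homogeneity every component is a singleton.

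For $(1) \implies (2)$ I would first establish the classical topological lemma that every point in a locally compact Hausdorff totally disconnected space admits a neighborhood basis of compact open sets. Given $x$ and an open neighborhood $W$, pick a compact neighborhood $K \subseteq W$ of $x$. The compact Hausdorff space $K$ is itself totally disconnected, so in $K$ the connected component of $x$ equals its quasi-component (the intersection of sets clopen in $K$ containing $x$), forcing clopen-in-$K$ sets to form a basis at $x$ in $K$. Hence there exists $A$ clopen in $K$ with $x \in A$ and $A$ contained in the interior of $K$ in $G$; such an $A$ is both compact and open in $G$.

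Armed with this, let $U$ be any compact open neighborhood of $e$. Apply the tube lemma to the continuous multiplication $m\colon G \times U \to G$ on the slice $\{e\} \times U \subseteq m^{-1}(U)$ to obtain an open neighborhood $V_0$ of $e$ with $V_0 U \subseteq U$. Choose a symmetric open neighborhood $V$ of $e$ with $V \subseteq V_0 \cap V_0^{-1} \cap U$; induction on $n$ gives $V^n \subseteq U$ for all $n$, so
$$H \defeq \bigcup_{n=1}^\infty V^n$$
is an open subgroup contained in $U$ (symmetry of $V$ yields closure under inverses, and $e \in V$). Every open subgroup is clopen, and a closed subset of the compact set $U$ is compact, so $H$ is a compact open subgroup contained in $U$. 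Varying $U$ over a neighborhood basis of $e$ then yields (2).

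I expect the main obstacle to be the topological lemma: it rests on the non-trivial fact that quasi-components coincide with connected components in a compact Hausdorff space, a fact that is customarily proved via a Zorn or finite-intersection argument on the family of clopen neighborhoods. Once compact open neighborhoods of $e$ are in hand, the tube lemma together with the symmetric-word construction of $H$ is a short formal manipulation.
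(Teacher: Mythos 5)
The paper does not prove this statement; it is van Dantzig's theorem, cited to Hofmann--Morris \cite[Theorem~1.34]{HOFMANN_MORRIS}, so there is no authorial proof to compare against. Your argument is the standard one and is correct: the $(2)\Rightarrow(1)$ direction via clopen sets separating the identity component, then $(1)\Rightarrow(2)$ by (i) the \v{S}ura-Bura/quasi-component fact to produce a basis of compact open neighborhoods in a totally disconnected locally compact Hausdorff space, (ii) the tube-lemma step producing $V_0$ with $V_0 U\subseteq U$, and (iii) the word-closure $H=\bigcup_{n\ge 1}V^n$ with $V$ symmetric, which is an open hence clopen subgroup inside the compact set $U$, hence compact. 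Each step is sound; the one point worth spelling out when you write this up is the finite-intersection argument that turns ``quasi-component of $x$ in $K$ equals $\{x\}$'' into ``clopen-in-$K$ sets form a neighborhood basis at $x$,'' and the observation that an $A$ clopen in $K$ with $A\subseteq\operatorname{int}_G K$ is open in $G$ (since $A=A\cap\operatorname{int}_G K$ is the trace on an open set of a set open in $K$), which you already use implicitly.
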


\begin{lemma}\label{l.tot_disconnected_proj}
If $G\in \cals(\su) $ is totally disconnected, then $\pi_n(G)$ is finite for each $n\in\nat$.
\end{lemma}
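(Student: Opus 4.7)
The plan is to combine the no-small-subgroups property of the Lie group $SU(n)$ with van Dantzig's theorem on totally disconnected compact groups, using that the restricted projection is an open map.

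First I would observe that since $G$ is compact, $\pi_n(G)$ is a closed subgroup of the Lie group $SU(n)$, and by Theorem~\ref{t.surj_cont_hom_open} the restricted homomorphism $\pi_n|_G \colon G \twoheadrightarrow \pi_n(G)$ is open. By Theorem~\ref{t.nss} (applied to $SU(n)$), there is an open neighborhood $U$ of $1_{SU(n)}$ that contains no nontrivial subgroup of $SU(n)$; in particular $U$ contains no nontrivial subgroup of $\pi_n(G)$.

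Next I would invoke Theorem~\ref{t.vandantzig}: since $G$ is totally disconnected and compact, the identity $1_G$ has a neighborhood basis of compact open subgroups. Choose such a compact open subgroup $V \leq G$ with $V \subseteq \pi_n^{-1}(U)$ (possible because $\pi_n^{-1}(U)$ is an open neighborhood of $1_G$). Then $\pi_n(V)$ is a subgroup of $\pi_n(G)$ contained in $U$, so $\pi_n(V) = \{1_{SU(n)}\}$ by the choice of $U$. Hence $V \subseteq \ker(\pi_n|_G)$, which means $\ker(\pi_n|_G)$ is open in $G$.

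Finally, since $\pi_n|_G$ is a continuous open surjection, it factors as a topological isomorphism $G/\ker(\pi_n|_G) \cong \pi_n(G)$. Because the kernel is open, the quotient $G/\ker(\pi_n|_G)$ is discrete; because $G$ is compact, the quotient is also compact. A compact discrete group is finite, so $\pi_n(G)$ is finite, as required. There is no real obstacle here: once one recognizes that the no-small-subgroups property of $SU(n)$ can be pulled back through the open projection $\pi_n|_G$ to contradict the existence of arbitrarily small compact open subgroups of $G$ unless $\pi_n(G)$ is already finite, the argument is a short assembly of the cited theorems.
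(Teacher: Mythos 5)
Your proof is correct and follows the same strategy as the paper's: pull the no-small-subgroups neighborhood of $SU(n)$ back through $\pi_n$, apply van Dantzig's theorem to find a compact open subgroup of $G$ mapping into it, and conclude that the image is finite. The only cosmetic difference is the last step — the paper observes directly that the compact open subgroup has finite index in $G$, whereas you route through the openness of the kernel and the topological isomorphism $G/\ker(\pi_n|_G)\cong\pi_n(G)$; both are valid and essentially equivalent.
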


\begin{proof}
Fix $n\in\nat$. By Theorem~\ref{t.nss} there is a neighborhood $U$ of the identity of $SU(n)$ that contains no nontrivial subgroups of $SU(n)$. By Theorem~\ref{t.vandantzig} and the continuity of $\pi_n$ we can pick a compact open subgroup $H$ of $G$ such that $\pi_n(H)\subseteq U$ and thus $\pi_n(H)=\{1_{SU(n)}\}$. Also notice that the index of $H$ is finite since $H$ is open, therefore $\pi_n(G)$ is finite.
\end{proof}

\begin{theorem}\label{t.full_projection}
For the generic $G\in\cals(\su)$ there are infinitely many $n\in \nat$ such that $\pi_n(G)=SU(n)$.
\end{theorem}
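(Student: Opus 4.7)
The plan is a standard Baire category argument. For each $n\in\nat$ set
$$V_n\defeq\{G\in\cals(\su):\ \pi_n(G)=SU(n)\},\qquad U_N\defeq\bigcup_{n\geq N}V_n.$$
A group $G$ lies in $\bigcap_{N\in\nat}U_N$ iff $\pi_n(G)=SU(n)$ for infinitely many $n$, so it suffices to prove that each $U_N$ is dense open in $\cals(\su)$.

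\textbf{Openness.} I would observe that by Lemma~\ref{l.continuous_image} the continuous homomorphism $\pi_n:\su\to SU(n)$ induces a continuous map $\calk(\pi_n):\calk(\su)\to\calk(SU(n))$, which restricts to a continuous map $\tilde\pi_n:\cals(\su)\to\cals(SU(n))$ since the continuous homomorphic image of a compact subgroup is a compact subgroup. By Lemma~\ref{l.sun_proper_subgroups} the point $SU(n)$ is isolated in $\cals(SU(n))$, so $V_n=\tilde\pi_n^{-1}(\{SU(n)\})$ is open, and hence so is $U_N$.

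\textbf{Denseness.} Given $H\in\cals(\su)$ and $\eps>0$, I would choose $M\geq N$ so large that $3\cdot 2^{-M}<\eps$, let $K\defeq\pi_{[M-1]}(H)\in\cals(\su)$, and define
$$G\defeq K\cdot\Bigl(\{1_{SU(1)}\}\times\cdots\times\{1_{SU(M-1)}\}\times SU(M)\times\{1_{SU(M+1)}\}\times\cdots\Bigr),$$
the product being taken coordinatewise inside $\su$. Then $G$ is a compact subgroup of $\su$ with $\pi_n(G)=\pi_n(H)$ for $n<M$, $\pi_M(G)=SU(M)$, and $\pi_n(G)=\{1\}$ for $n>M$; in particular $G\in V_M\subseteq U_N$. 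A short metric computation gives $d_H(K,H)\leq 2^{1-M}$ (because $H$ and $K$ agree on the first $M-1$ coordinates) and $d_H(G,K)\leq 2^{-M}$ (because $K\subseteq G$ and every $g\in G$ differs from its projection $\pi_{[M-1]}(g)\in K$ only in the $M$-th coordinate), hence $d_H(G,H)<\eps$.

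\textbf{Main obstacle.} The whole argument hinges on Lemma~\ref{l.sun_proper_subgroups}: without the isolation of $SU(n)$ in $\cals(SU(n))$, the set $V_n$ would in general only be a countable intersection of open sets (its complement is an increasing union of the closed sets $\{G:\pi_n(G)\subseteq F\}$ as $F$ ranges over proper closed subgroups of $SU(n)$), and one could not conclude openness. Given that lemma, the rest is a routine density construction exploiting the product structure of $\su$.
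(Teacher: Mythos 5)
Your proposal is correct and follows essentially the same approach as the paper: openness of each $V_n$ comes from Lemma~\ref{l.continuous_image} together with the isolation of $SU(n)$ in $\cals(SU(n))$ (Lemma~\ref{l.sun_proper_subgroups}), and density comes from replacing the tail coordinates with a full special unitary factor. The only cosmetic difference is in the density step: the paper approximates $G$ by $\pi_{[m]}(G)\times SU(m+1)\times SU(m+2)\times\cdots$, which lands directly in the full $G_\delta$ set $\bigcap_N U_N$, whereas you insert a single $SU(M)$ factor and prove each $U_N$ dense separately; both are valid and equally routine.
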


\begin{proof}
Let 
$$\calu:=\{G\in \cals(\su):\ \forall k\in \nat\ \exists n\geq k \ (\pi_n(G)=SU(n))\}=$$
$$=\bigcap_{k=1}^\infty\bigcup_{n=k}^\infty\underbrace{\{G\in\cals(\su):\ \pi_n(G)=SU(n)\}}_{\calu_n}.$$
It follows from Lemma~\ref{l.continuous_image} and Lemma~\ref{l.sun_proper_subgroups} that each $\calu_n$ is open in $\cals(\su)$, hence $\calu$ is $G_\delta$. Thus it sufficies to prove that $\calu$ is dense. Pick any group $G\in \cals(\su)$.  Then, with a slight abuse of notation,
$$(\pi_{[m]}(G)\times SU(m+1)\times SU(m+2)\times\ldots)\hconv G$$
shows that $G$ can be approximated by elements of $\calu$.
\end{proof}

\begin{cor}\label{t.disconnected-nowheredense}
Totally disconnected groups form a nowhere dense set in $\cals(\su)$.
\end{cor}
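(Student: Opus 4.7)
The plan is to exhibit a single closed nowhere dense set containing every totally disconnected $G\in\cals(\su)$, which immediately upgrades the meagerness implicit in Theorem~\ref{t.full_projection} to actual nowhere density. Let
$$\calt\defeq\{G\in\cals(\su):\ G\text{ is totally disconnected}\}\qquad\text{and}\qquad \calu_n\defeq\{G\in\cals(\su):\ \pi_n(G)=SU(n)\},$$
the latter being the open set already introduced inside the proof of Theorem~\ref{t.full_projection}.

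First I would invoke Lemma~\ref{l.tot_disconnected_proj}: if $G\in\calt$ then $\pi_n(G)$ is finite for every $n\in\nat$. Since $SU(n)$ is infinite for every $n\geq 2$, this gives $G\notin\calu_n$ for all such $n$. Hence
$$\calt\subseteq\calc\defeq\bigcap_{n\geq 2}\bigl(\cals(\su)\setminus\calu_n\bigr).$$
The set $\calc$ is closed because each $\calu_n$ is open (as established in the proof of Theorem~\ref{t.full_projection} via Lemma~\ref{l.continuous_image} together with Lemma~\ref{l.sun_proper_subgroups}).

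Second I would show $\calc$ has empty interior, i.e.\ that $\bigcup_{n\geq 2}\calu_n$ is dense in $\cals(\su)$. This is exactly the approximation argument at the end of Theorem~\ref{t.full_projection}: for any $G\in\cals(\su)$ and any $m\geq 1$, the group $\pi_{[m]}(G)\times SU(m+1)\times SU(m+2)\times\cdots$ belongs to $\calu_n$ for every $n>m$ (by the explicit form of its factors) and Hausdorff-converges to $G$ as $m\to\infty$ by Observation~\ref{o.hconv}. So every $G$ is approximated by elements of $\bigcup_{n\geq 2}\calu_n$, which is therefore dense (and open).

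Combining the two steps, $\calc$ is closed with dense complement, so $\calc$ is closed nowhere dense. Since $\calt\subseteq\calc$, its closure $\overline{\calt}\subseteq\calc$ also has empty interior, which is exactly the statement that $\calt$ is nowhere dense. There is no genuine obstacle here: the real content sits in Lemma~\ref{l.tot_disconnected_proj} (no small subgroups plus van Dantzig) and in Lemma~\ref{l.sun_proper_subgroups} (isolation of $SU(n)$), and the corollary is essentially a bookkeeping consequence of those two inputs.
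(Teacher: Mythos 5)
Your proof is correct and follows essentially the same route as the paper's: both establish that $\bigcup_{n\geq 2}\calu_n$ is dense open (via Lemma~\ref{l.sun_proper_subgroups} and the $\pi_{[m]}$-approximation from Theorem~\ref{t.full_projection}) and that its complement, a closed set with empty interior, contains every totally disconnected group by Lemma~\ref{l.tot_disconnected_proj}. The paper simply states this more tersely, citing the proof of Theorem~\ref{t.full_projection} directly.
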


\begin{proof}
We have shown in the proof of Theorem~\ref{t.full_projection} that for any $k\in\nat$ the set $\bigcup_{n=k}^\infty\calu_n$ is dense open. On the other hand, by Lemma~\ref{l.tot_disconnected_proj} it does not contain totally disconnected groups for $k\geq 2$.
\end{proof}
 
\begin{cor}\label{c.id_comp_nontrivial}
For the generic $G\in\cals(\su)$ the identity component $G^\circ$ is nontrivial.
\end{cor}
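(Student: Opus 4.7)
The plan is to deduce this immediately from Corollary~\ref{t.disconnected-nowheredense} via the standard equivalence ``totally disconnected $\iff$ $G^\circ$ trivial'' for topological groups.

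First I would recall that in any topological group $G$ the identity component $G^\circ$ is a closed normal subgroup, and the quotient $G/G^\circ$ is totally disconnected. From this it follows that $G$ itself is totally disconnected if and only if $G^\circ = \{1_G\}$: if $G^\circ$ is trivial then the quotient map is the identity, and conversely any nontrivial connected subgroup containing the identity forces $G^\circ$ to be nontrivial.

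Next I would invoke Corollary~\ref{t.disconnected-nowheredense}, which asserts that the set of totally disconnected groups is nowhere dense in $\cals(\su)$. A nowhere dense set is meager, so its complement is comeager. Therefore the generic $G \in \cals(\su)$ fails to be totally disconnected, and by the equivalence above, $G^\circ \neq \{1_G\}$.

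There is no substantive obstacle here: the corollary is a one-line consequence of the previously established nowhere-denseness result, together with the elementary fact about identity components. The only thing worth stating carefully is the equivalence between triviality of $G^\circ$ and total disconnectedness, since this is what bridges the topological statement of Corollary~\ref{t.disconnected-nowheredense} with the algebraic statement about $G^\circ$.
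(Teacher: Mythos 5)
Your argument is correct and matches the paper's intended reasoning: the paper states this corollary without proof, immediately after Corollary~\ref{t.disconnected-nowheredense}, so the implicit deduction is exactly yours. The only caveat worth noting is that the equivalence ``$G^\circ=\{1\}$ iff $G$ totally disconnected'' uses the homogeneity of $G$ (every component has the form $gG^\circ$), not merely that $G/G^\circ$ is totally disconnected, but this is the standard elementary fact and you invoke it correctly.
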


\begin{theorem}\label{t.quotient_cantor}
For the generic $G\in\cals(\su)$ the quotient $G/G^\circ$ is homeomorphic to the Cantor set.
\end{theorem}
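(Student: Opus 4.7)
The strategy is to combine a standard structural fact with a genericity argument. Structurally, since $G^\circ$ is a closed normal subgroup of the compact metrizable group $G$, the quotient $G/G^\circ$ is a compact metrizable topological group, and it is totally disconnected by standard theory. By homogeneity, either every point of $G/G^\circ$ is isolated (so $G/G^\circ$ is discrete, hence finite by compactness) or no point is. In the latter case $G/G^\circ$ is a nonempty compact metrizable totally disconnected space without isolated points, so by Brouwer's theorem it is homeomorphic to the Cantor set. Hence it suffices to prove that generically $G/G^\circ$ is infinite.

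To that end, for each $n\in\nat$ I would introduce
$$\mathcal{V}_n\defeq\{G\in\cals(\su):\ G\text{ admits a continuous surjective homomorphism onto a finite group of order}\geq n\}.$$
If $\varphi:G\to F$ witnesses $G\in\mathcal{V}_n$, then $\kernel\varphi$ has finite index in $G$ and is therefore open, so $G^\circ\subseteq\kernel\varphi$ and $[G:G^\circ]\geq n$. Hence any $G\in\bigcap_n\mathcal{V}_n$ has infinite $G/G^\circ$, which is then Cantor by the structural paragraph. Via the Baire category theorem it thus suffices to show that each $\mathcal{V}_n$ is both dense and open in $\cals(\su)$.

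For \textbf{openness} I would adapt Claim~1 in the proof of Lemma~\ref{l.generic_finite_quotient}: given $G\in\mathcal{V}_n$ with surjective $\varphi:G\to F$ and $H\defeq\kernel\varphi$, let $\delta$ be the minimum pairwise distance between the (finitely many) cosets of $H$; for $L\in\cals(\su)$ with $d_H(L,G)<\delta/4$, one defines $\psi:L\to F$ by sending $l\in L$ to the unique $a\in F$ whose preimage coset comes within $\delta/4$ of $l$. The verification that $\psi$ is a homomorphism in the abelian case used only the translation invariance of the metric; in our setting the metric $d$ on $\su$ is \emph{biinvariant}, so the analogous estimate $d(x_ix_j,l_il_j)\leq d(x_j,l_j)+d(x_i,l_i)<\delta/2$ still holds, and normality of $H=\kernel\varphi$ guarantees $x_ix_j\in k_ik_jH$, which is exactly what the argument needs. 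For \textbf{density}, given a basic open neighborhood of some $G_0\in\cals(\su)$, I would choose $m\geq 1$ so large that $\pi_{[m]}(G_0)$ lies very close to $G_0$ (using $\pi_{[m]}(G_0)\hconv G_0$), pick a finite cyclic subgroup $F\leq SU(m+1)$ of order at least $n$ (e.g.\ generated by a diagonal rotation), and embed it into $\su$ as the copy $\wtilde F$ concentrated in the $(m+1)$th coordinate. Since $\pi_{[m]}(G_0)$ and $\wtilde F$ have disjoint coordinate supports, they commute and intersect trivially, so $G'\defeq\pi_{[m]}(G_0)\cdot\wtilde F\cong\pi_{[m]}(G_0)\times F$ is a closed subgroup of $\su$, and projection onto the second factor places $G'\in\mathcal{V}_n$. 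Because the new $(m+1)$th-coordinate contribution to $d$ is scaled by $1/2^{m+1}$, the group $G'$ still lies in the prescribed neighborhood of $G_0$ once $m$ is large.

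The main obstacle I anticipate is the openness of $\mathcal{V}_n$: one has to confirm that the nonabelian adaptation of the coset-separation argument of Lemma~\ref{l.generic_finite_quotient} really does yield a well-defined \emph{homomorphism} and not just a well-defined map. Biinvariance of $d$ on $\su$ together with normality of $\kernel\varphi$ are precisely the two inputs that make the homomorphism check go through.
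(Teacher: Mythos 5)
Your proof is correct and follows essentially the same route as the paper's: reduce to showing $[G:G^\circ]$ is generically infinite, then establish density and openness of $\{G : [G:G^\circ]\geq n\}$ (your $\mathcal{V}_n$ coincides with the paper's $\calu_n$, since any surjective continuous homomorphism from $G$ onto a finite group has open kernel containing $G^\circ$, and conversely $G/G^\circ$ is profinite). The only minor deviation is in the openness step, where the paper directly counts connected components of a nearby $L$ using the $\delta$-separation of the clopen cosets, whereas you take the slightly longer path of adapting the homomorphism construction from Lemma~\ref{l.generic_finite_quotient} — correctly noting that biinvariance of $d$ and normality of the kernel are exactly what make the nonabelian version go through.
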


\begin{proof}
Since $G/G^\circ$ is a totally disconnected metrizable (see \cite[Theorem~4.5]{HAAGERUP}) compact group, it suffices to show that for the generic $G\in\cals(\su)$ it is not finite. Let $n$ be a fixed positive integer. Let 
$$\calu_n\defeq\{G\in\cals(\su):\ |G:G^\circ|\geq n \}.$$
We will show that $\calu_n$ is open and dense, which completes the proof.

\textbf{Claim 1.} The set $\calu_n$ is dense.

Note that $SU(m)$ contains the cyclic group $C_m$ for each $m\in\nat$ (up to isomorphism). Also for any $G\in\cals(\su)$ we have, with an abuse of notation,
$$G_m\defeq\pi_{[m]}(G)\times C_{m+1}\times \{1\}\times \{1\}\times\ldots \overset{d_H}{\to} G$$
by Observation~\ref{o.hconv}. Here $G_m$ has at least $m+1$ connected components, hence $\calu_n$ is dense.

\textbf{Claim 2.} The set $\calu_n$ is open.

Pick any group $G\in\calu_n$. First we show that $G$ can be partitioned into finitely many but at least $n$ clopen cosets. It follows from Theorem~\ref{t.vandantzig} that the group $G/G^\circ$ can be partitioned into finitely many but at least $n$ clopen cosets. Since the natural homomorphism $G\to G/G^\circ$ is continuous, the inverse images of these clopen cosets form a suitable partition of $G$.

Now let $\delta$ be the minimal distance occuring between these clopen cosets of $G$. Then it is easy to see that any $L\in\cals(\su)$ with $d_H(L,K)<\frac{\delta}{2}$ has at least $n$ connected components.
\end{proof}

\begin{remark}\label{r.cm_not_simple}
It follows immediately from Corollary~\ref{c.id_comp_nontrivial} and Theorem~\ref{t.quotient_cantor} that the generic $G\in\cals(\su)$ is not simple. In particular, it is not algebraically closed by Remark~\ref{r.prop_of_ac}.
\end{remark}

\begin{cor}\label{c.cantor_x_conncomp}
The generic $G\in \cals(\su)$ is homeomorphic to $G^\circ\times C$, where $C$ is the Cantor set.
\end{cor}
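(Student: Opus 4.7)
The plan is to combine the generic structure results already established with a classical cross-section theorem. By Corollary~\ref{c.id_comp_nontrivial} and Theorem~\ref{t.quotient_cantor}, the generic $G\in\cals(\su)$ has nontrivial identity component $G^\circ$ and has $G/G^\circ$ homeomorphic to the Cantor set $C$. It therefore suffices to exhibit, for each such $G$, a homeomorphism of topological spaces $G\cong G^\circ\times (G/G^\circ)$; note that we only need a homeomorphism, not a group isomorphism.

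The crux is to produce a continuous (set-theoretic, not necessarily homomorphic) cross-section $s:G/G^\circ\to G$ of the quotient map $\pi:G\to G/G^\circ$. For any compact group whose component quotient is metrizable, such a section exists by the classical theorem of Mostert on cross-sections in principal fibre spaces. Alternatively, since $G/G^\circ$ is zero-dimensional and metrizable, one can build $s$ more directly: by van Dantzig's theorem (Theorem~\ref{t.vandantzig}), the identity of $G/G^\circ$ has a neighborhood basis of compact open subgroups $H_n$, so $G/G^\circ$ is an inverse limit of finite discrete quotients, and one may inductively choose compatible coset representatives in $G$ at each finite stage and pass to the limit.

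Given such an $s$, I would define
\[
\Phi:(G/G^\circ)\times G^\circ\to G,\qquad \Phi(x,g)\defeq s(x)\cdot g.
\]
Continuity of $\Phi$ is immediate from continuity of $s$ and of group multiplication. For bijectivity, any $h\in G$ forces $x=\pi(h)$ and then $g=s(\pi(h))^{-1}h$; the latter lies in $G^\circ$ because, using that $G^\circ$ is normal (so $\pi$ is a homomorphism), $\pi(s(\pi(h))^{-1}h)=\pi(h)^{-1}\pi(h)$ is the identity of $G/G^\circ$. Being a continuous bijection between compact Hausdorff spaces, $\Phi$ is a homeomorphism. Combining with $G/G^\circ\cong C$ then yields $G\cong G^\circ\times C$.

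The only nontrivial ingredient is the cross-section. The elementary inverse-limit construction is conceptually straightforward but requires some bookkeeping to maintain compatibility of the representatives between successive finite stages; citing Mostert sidesteps this at the cost of relying on an external result. Either route suffices for the corollary.
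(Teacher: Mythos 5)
Your proof is correct, but it takes a genuinely different route from the paper's. The paper disposes of the topological decomposition in one line by citing a specific result (Corollary~10.38 in the Hofmann--Morris book), which asserts that for \emph{every} compact group $G$ the space $G$ is homeomorphic to $G^\circ\times G/G^\circ$; combined with Theorem~\ref{t.quotient_cantor}, the corollary is immediate. You instead reconstruct essentially the proof of that cited fact: you reduce it to the existence of a continuous cross-section $s:G/G^\circ\to G$ of the quotient map, and then observe that $(x,g)\mapsto s(x)\cdot g$ is a continuous bijection from the compact space $(G/G^\circ)\times G^\circ$ onto the Hausdorff space $G$, hence a homeomorphism. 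For the cross-section you offer two justifications: citing Mostert's cross-section theorem, or an inverse-limit construction using van Dantzig's theorem. Both routes are sound; the inverse-limit construction indeed requires some care to arrange that successive coset representatives converge uniformly (one needs the openness of $\pi$ uniformly, via compactness, to keep consecutive choices close), but it does work. The upshot: your argument is more self-contained and explains \emph{why} the decomposition holds, at the cost of being longer and relying on either a different external reference (Mostert) or a fiddly induction; the paper's citation is shorter and relies on the same structural fact as a black box. A small remark: metrizability is not actually needed for Mostert's theorem in the zero-dimensional case, so your hypothesis there is slightly stronger than necessary, though harmless here since $G$ is metrizable.
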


\begin{proof}
By \cite[Corollary~10.38.]{HOFMANN_MORRIS} for every compact group $G$ the group $G^\circ\times G/G^\circ$ is homeomorphic to $G$, hence the corollary follows from Theorem~\ref{t.quotient_cantor}.
\end{proof}

The following useful fact is well-known. We present a proof for completeness.

\begin{lemma}\label{l.connected_component}
Let $G$ and $H$ be compact groups and suppose that $H$ is connected. Then for any surjective continuous homomorphism $\Phi:G\to H$ we have $\Phi(G^\circ)=H$.
\end{lemma}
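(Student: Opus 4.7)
The plan is to set $N \defeq \Phi(G^\circ)$ and to show that $H/N$ is trivial. First I would collect the easy properties of $N$: since $G^\circ$ is closed in the compact group $G$, it is compact, so $N=\Phi(G^\circ)$ is compact, hence closed in $H$. Since $G^\circ$ is normal in $G$ and $\Phi$ is surjective, $N$ is normal in $H$. Finally $N$ is connected as the continuous image of the connected set $G^\circ$. Thus $H/N$ is a well-defined compact Hausdorff topological group.

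Next I would pass to the quotient. The homomorphism $\Phi$ induces a surjective continuous homomorphism
\[
\bar\Phi : G/G^\circ \longrightarrow H/N, \qquad gG^\circ \longmapsto \Phi(g)N,
\]
which is well defined because $\Phi(G^\circ)=N$. Now $G/G^\circ$ is a compact totally disconnected group (this is the classical fact that gave rise to van Dantzig's theorem, already used in the paper via Theorem~\ref{t.vandantzig}); equivalently, it is profinite. A continuous surjective image of a profinite group onto a compact Hausdorff group is again profinite, hence $H/N$ is totally disconnected.

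On the other hand, $H/N$ is the continuous image of the connected group $H$ under the quotient map, so $H/N$ is connected. A topological space that is both connected and totally disconnected and contains the identity must be a single point, so $H/N$ is trivial, i.e.\ $N=H$, which is exactly $\Phi(G^\circ)=H$.

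The only nonroutine ingredient is the assertion that a continuous quotient of a profinite group by a closed normal subgroup is profinite (equivalently, totally disconnected). Thus the main potential obstacle is just making sure one cites or verifies this cleanly; it can be handled either by invoking the structure theorem for profinite groups (clopen subgroups form a neighborhood basis of the identity and descend to the quotient) or directly by using Theorem~\ref{t.vandantzig} in $H/N$, observing that the preimages in $G/G^\circ$ of the compact open subgroups yield, via $\bar\Phi$, a neighborhood basis of the identity in $H/N$ consisting of compact open subgroups. No other technical difficulty is expected.
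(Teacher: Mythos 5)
Your proof is correct and follows essentially the same route as the paper: pass to the induced map $G/G^\circ\to H/\Phi(G^\circ)$, use van Dantzig (Theorem~\ref{t.vandantzig}) together with the open mapping theorem for compact groups to conclude that $H/\Phi(G^\circ)$ is totally disconnected, and then invoke connectedness to force triviality. The paper carries out the same argument inline (producing a proper clopen subgroup of $H/\Phi(G^\circ)$ and contradicting connectedness) rather than quoting the fact that quotients of profinite groups are profinite, but the content is identical. One small slip in your last sentence: you want the \emph{images} under $\bar\Phi$ of the compact open subgroups of $G/G^\circ$ (using openness of $\bar\Phi$), not their preimages.
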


\begin{proof}
Let $\varphi:G/G^\circ\to H/\Phi(G^\circ)$ be the natural continuous surjection, that is, $\varphi(gG^\circ):=\Phi(g)\Phi(G^\circ)$. We need to prove that $H/\Phi(G^\circ)$ is trivial.

Suppose that there is an open neighborhood $V$ of the identity of $H/\Phi(G^\circ)$ such that $V\neq H/\Phi(G^\circ)$. By Theorem~\ref{t.vandantzig} and the continuity of $\varphi$ there is a clopen subgroup $U\leq G/G^\circ$ such that $\varphi(U)\subseteq V$.

Recall that $\varphi$ is an open map because it is a surjective continuous homomorphism between compact groups. Therefore, $\varphi(U)\neq H/\Phi(G^\circ)$ is clopen, which contradicts the connectedness of $H/\Phi(G^\circ)$.
\end{proof}

This lemma allows us to state a slight strengthening of Theorem~\ref{t.full_projection}:

\begin{cor}\label{c.id_comp_full_proj}
For the generic $G\in\cals(\su)$ there are infinite many $k\in\nat$ such that $\pi_k(G^\circ)=SU(k)$.
\end{cor}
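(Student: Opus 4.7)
The plan is to combine Theorem~\ref{t.full_projection} with Lemma~\ref{l.connected_component} essentially verbatim: the corollary should follow without any new construction, since the comeager set of Theorem~\ref{t.full_projection} already supplies infinitely many indices $k$ with $\pi_k(G)=SU(k)$, and Lemma~\ref{l.connected_component} automatically upgrades full projection of $G$ to full projection of $G^\circ$.

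More precisely, I would fix a generic $G\in\cals(\su)$ from the comeager set produced by Theorem~\ref{t.full_projection}, so that the set
$$N(G)\defeq\{k\in\nat:\ \pi_k(G)=SU(k)\}$$
is infinite. For each $k\in N(G)$, the restricted map $\pi_k:G\to SU(k)$ is a surjective continuous homomorphism between compact groups, and its codomain $SU(k)$ is connected (as recorded in the proof of Lemma~\ref{l.sun_proper_subgroups}, via \cite[Proposition~13.11]{HALL}). Applying Lemma~\ref{l.connected_component} with the ambient group $G$ and target $H=SU(k)$ then yields $\pi_k(G^\circ)=SU(k)$. Since this holds for every $k\in N(G)$ and $N(G)$ is infinite, the conclusion follows on the same comeager set that witnessed Theorem~\ref{t.full_projection}.

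There is no real obstacle here; the work is already done by the two ingredients. The only thing to check is that the hypotheses of Lemma~\ref{l.connected_component} are satisfied, namely surjectivity of $\pi_k|_G$ (which is exactly the definition of $k\in N(G)$) and connectedness of $SU(k)$ (standard). No separate genericity or category argument is needed beyond invoking Theorem~\ref{t.full_projection}.
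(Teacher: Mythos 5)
Your proof is correct and is exactly the argument the paper has in mind: the corollary is stated immediately after Lemma~\ref{l.connected_component} as a consequence of it together with Theorem~\ref{t.full_projection}, with no further work needed. You have simply spelled out the one-line reduction.
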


\begin{theorem}\label{t.notlie}
For the generic $G\in \cals(\su)$ the connected component $G^\circ$ is not Lie.
\end{theorem}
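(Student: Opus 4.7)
The plan is to combine Corollary~\ref{c.id_comp_full_proj} with a straightforward dimension argument. That corollary already tells us that for the generic $G \in \cals(\su)$ there exist infinitely many $k \in \nat$ satisfying $\pi_k(G^\circ) = SU(k)$. I claim that any $G$ with this property automatically has $G^\circ$ non-Lie, which suffices since the set of such $G$ is comeager.

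To establish the pointwise claim, I would argue by contradiction. Suppose $G^\circ$ is a compact Lie group of some finite dimension $d$. By the classical theorem that every continuous homomorphism between Lie groups is automatically smooth, the restriction $\pi_k|_{G^\circ}\colon G^\circ \to SU(k)$ is a smooth homomorphism for each $k$. For any $k$ witnessing $\pi_k(G^\circ) = SU(k)$, this is a \emph{surjective} smooth map from a $d$-dimensional manifold onto a $(k^2-1)$-dimensional manifold. By Sard's theorem, a surjective smooth map $f\colon M \to N$ forces $\dim M \geq \dim N$ (otherwise every point of $M$ is critical and the image has measure zero, so cannot equal $N$). Hence $d \geq k^2 - 1$ for infinitely many $k$, which is absurd since $k^2 - 1 \to \infty$. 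Therefore $G^\circ$ is not Lie.

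The argument is largely bookkeeping, so I do not anticipate a serious obstacle. The only ingredients beyond results already in the paper are the automatic smoothness of continuous Lie group homomorphisms (the classical von~Neumann--Cartan theorem) and the elementary Sard-theoretic consequence just used. One deliberate choice worth highlighting is that we genuinely need Corollary~\ref{c.id_comp_full_proj} and not merely Theorem~\ref{t.full_projection}: the surjective projection has to come from $G^\circ$ itself, since otherwise a low-dimensional Lie identity component sitting inside a large non-Lie $G$ could project to a proper subgroup of $SU(k)$ even while $\pi_k(G) = SU(k)$, and the dimension comparison would fail to give any contradiction.
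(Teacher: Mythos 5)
Your proposal is correct and follows essentially the same route as the paper's own proof: Corollary~\ref{c.id_comp_full_proj} to get surjections $\pi_k|_{G^\circ}\colon G^\circ\to SU(k)$ for arbitrarily large $k$, automatic smoothness of continuous homomorphisms between Lie groups, and Sard's theorem to obtain the dimension contradiction. Your observation that one genuinely needs Corollary~\ref{c.id_comp_full_proj} rather than Theorem~\ref{t.full_projection} is accurate and exactly why the paper records that corollary; your phrasing of the Sard step is if anything slightly more explicit than the paper's.
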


\begin{proof}
Fix $n\in\nat$ arbitrarily. It sufficies to prove that for the generic $G\in\cals(\su)$ its identity component $G^\circ$ is not an $n$-dimensional Lie group. By Corollary~\ref{c.id_comp_full_proj}, for the generic $G\in\cals(\su)$ we have $\pi_k(G^\circ)=SU(k)$ for infinitely many $k\in\nat$.

Suppose that $G^\circ$ is an $n$-dimensional Lie group, and fix some $k>n$ with $\pi_k(G^\circ)=SU(k)$. It is well-known that any continuous homomorphism between Lie groups is smooth. (See \cite{WANG} or \cite[Corollary~3.50.]{HALL}. Although this corollary is stated for matrix Lie groups, the proof works for any Lie group.) Thus $\pi_k|_{G^\circ}:G^\circ\rightarrow SU(k)$ is smooth. It is also surjective, therefore, by Sard's lemma \cite[Theorem~4.1]{SARD}, the differential of $\pi_k|_{G^\circ}$ is surjective at some point, which contradicts $k>n$.
\end{proof}

We summarize the topological results in the following theorem.

\begin{theorem}\label{t.compact_metriz_top}
The generic $G\in\cals(\su)$ is homeomophic to $G^\circ\times C$, where $C$ is the Cantor set and $G^\circ$ is a nontrivial connected group but not a Lie group.
\end{theorem}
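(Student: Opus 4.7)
The plan is simply to observe that this theorem is a summary: each of its three assertions has already been established as a comeager property of $\cals(\su)$, and since $\cals(\su)$ is a closed subspace of the compact metrizable space $\calk(\su)$ (hence Polish, hence Baire), the intersection of countably many comeager sets is still comeager.

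More concretely, I would first invoke Corollary~\ref{c.cantor_x_conncomp}, which gives that the generic $G\in\cals(\su)$ is homeomorphic to $G^\circ\times C$ with $C$ the Cantor set; this is itself a consequence of Theorem~\ref{t.quotient_cantor} together with the Hofmann--Morris decomposition $G\cong_{\text{top}} G^\circ\times G/G^\circ$. Second, I would invoke Corollary~\ref{c.id_comp_nontrivial}, which asserts that $G^\circ$ is nontrivial for the generic $G$ (deduced from Theorem~\ref{t.full_projection} via Lemma~\ref{l.tot_disconnected_proj}: a totally disconnected $G$ has finite projections $\pi_n(G)$, contradicting $\pi_n(G)=SU(n)$ for infinitely many $n$). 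Third, I would invoke Theorem~\ref{t.notlie}, which says that $G^\circ$ is not a Lie group for the generic $G$.

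Since each of these three statements defines a comeager subset of $\cals(\su)$, the intersection of the three subsets is again comeager by the Baire category theorem (Theorem~\ref{t.bct}), applied to the Polish space $\cals(\su)$ (which is Polish by Definition~\ref{d.compact_subgroups}, since $\su$ is compact metrizable). Any $G$ in this intersection satisfies all three conclusions simultaneously, proving the theorem. There is no real obstacle here: the content lies in the three cited results, and the only thing to verify is that $\cals(\su)$ is a Baire space, which is immediate from the fact that it is a closed subspace of the compact metrizable space $\calk(\su)$.
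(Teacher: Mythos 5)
Your proposal is correct and matches the paper's approach: the paper's proof is simply "Follows immediately from Corollary~\ref{c.cantor_x_conncomp} and Theorem~\ref{t.notlie}." You additionally cite Corollary~\ref{c.id_comp_nontrivial} for nontriviality, but this is already implied by Theorem~\ref{t.notlie} (a trivial group is a Lie group), so the content is the same.
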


\begin{proof}
Follows immediately from Corollary~\ref{c.cantor_x_conncomp} and Theorem~\ref{t.notlie}. 
\end{proof}

\subsection{Torsion elements}

Based on the abelian case one could suspect that the generic $G\in\cals(\su)$ is torsion-free. We will show that this is not true. The following proposition is a reformulation of Proposition~2.~c) in \cite[Chapter~IX, Appendix~I, Section~3]{BOURBAKI_7-9}.

\begin{prop}\label{p.bourbaki}
For any compact connected group $G$ there is a family $\{S_i:\ i\in I\}$ of simple connected compact Lie groups and a surjective continuous homomorphism $\prod_{i\in I}S_i\to [G,G]$ (where $[G,G]$ is the commutator subgroup of $G$), whose kernel is a totally disconnected, compact, central subgroup.
\end{prop}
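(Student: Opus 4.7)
The plan is to derive this directly from the cited Bourbaki result and then verify the extra topological property of the kernel, which is the only thing not stated verbatim there. Bourbaki's Proposition~2(c) describes the structure of the commutator subgroup of a compact connected group: there is a family $\{S_i : i \in I\}$ of simply connected simple compact Lie groups together with a surjective continuous homomorphism $\Phi : \prod_{i \in I} S_i \to [G,G]$ whose kernel $N$ is contained in the center $Z\bigl(\prod_{i \in I} S_i\bigr)$. The only thing I need to justify, on top of quoting this, is that this central kernel $N$ is compact and totally disconnected.

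First I would observe that $N$ is compact since it is a closed subgroup of the compact group $\prod_{i \in I} S_i$ (closed because it is the kernel of a continuous homomorphism into a Hausdorff group). The centrality of $N$ is explicit in Bourbaki's statement. The remaining step — the key topological point — is total disconnectedness. For this I would use the classical fact that the center $Z(S_i)$ of any simply connected simple compact Lie group $S_i$ is finite (this is a standard consequence of the classification of simply connected compact simple Lie groups: each has finite fundamental group of its adjoint form, equivalently finite center). Consequently
\[
Z\Bigl(\prod_{i \in I} S_i\Bigr) = \prod_{i \in I} Z(S_i)
\]
is a product of finite discrete groups, hence a profinite group, hence totally disconnected. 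Since $N$ is a (closed) subgroup of this totally disconnected group, $N$ itself is totally disconnected.

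The main — really only — obstacle is the translation between Bourbaki's formulation and the one stated here. Bourbaki phrases the result in terms of the identity component of $G$ and its decomposition into the central torus part and the semisimple part; one needs to identify the semisimple factor with $[G,G]$ and note that the Bourbaki map factors through a product of \emph{simply connected} simple compact Lie groups with central kernel. Once this identification is made, the three properties of the kernel (compact, central, totally disconnected) follow from the finiteness of centers argued above, completing the reformulation.
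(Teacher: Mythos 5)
Your proposal is correct and mirrors what the paper itself does: the paper presents the proposition as a reformulation of Bourbaki's Proposition~2(c) and explicitly reminds the reader that simple connected compact Lie groups have finite center, which is exactly the fact you invoke to conclude that the central kernel sits inside a profinite (hence totally disconnected) group $\prod_{i\in I} Z(S_i)$. Compactness and centrality come for free from Bourbaki as you note, so the argument is complete and follows the paper's intended route.
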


In \cite[Chapter~IX, Appendix~I, Section~3]{BOURBAKI_7-9} the term \textit{almost simple} is used. However, Bourbaki's definition of an \textit{almost simple} Lie group agrees with what most modern authors call a connected simple Lie group, see \cite[Chapter~III, Section~9.8, Definition~3]{BOURBAKI_1-3}.

The preceding reference and the following proposition was communicated to us by Yves de Cornulier on the website mathoverflow.com. We remind the reader that every simple connected compact Lie group has finite center (see \cite[page~128]{HELGASON}). 

\begin{prop}\label{p.cc_torsion_free}
Every compact connected torsion-free group is abelian.
\end{prop}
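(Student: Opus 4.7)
The plan is to derive a contradiction from assuming $[G,G]\neq\{1_G\}$, using Proposition~\ref{p.bourbaki} together with the abundance of torsion elements in tori. Given a compact connected torsion-free group $G$, I would obtain a family $\{S_i:i\in I\}$ of simple connected compact Lie groups and a surjective continuous homomorphism $\varphi:\prod_{i\in I}S_i\to [G,G]$ whose kernel $N\defeq\kernel\varphi$ is totally disconnected. Note that $[G,G]\leq G$ inherits torsion-freeness from $G$.

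The key step is to show that every $S_i$ must be trivial. Suppose for contradiction that some $S_{i_0}$ is nontrivial. Since $S_{i_0}$ is a simple connected compact Lie group of positive dimension, its Lie algebra is a nonzero semisimple Lie algebra and therefore has positive rank, so $S_{i_0}$ contains a nontrivial maximal torus $T\cong(\real/\int)^{n}$ with $n\geq 1$. I would view $T$ as a subgroup of $\prod_{i\in I}S_i$ by inserting identities in all other coordinates.

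Next I would exploit the classical fact that the torsion elements form a dense subgroup of $T$ (rational-coordinate points are dense in $(\real/\int)^n$). Any torsion element $t\in T$ has $\varphi(t)$ a torsion element of the torsion-free group $[G,G]$, so $\varphi(t)=1_G$. Since $\varphi$ is continuous and torsion elements are dense in $T$, this forces $\varphi|_T\equiv 1_G$, i.e., $T\subseteq N$. But $T$ is nontrivial and connected while $N$ is totally disconnected, a contradiction.

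Therefore every $S_i$ is trivial, so $\prod_{i\in I}S_i$ is trivial and $[G,G]=\varphi\left(\prod_{i\in I}S_i\right)=\{1_G\}$, proving $G$ is abelian. The only subtlety is the two standard inputs (positivity of the rank of a simple compact Lie group and density of torsion in a torus), neither of which should present a real obstacle; the core of the proof is just matching the totally disconnected kernel against the connected torus $T$.
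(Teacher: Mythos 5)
Your proof is correct, but it uses a different feature of the kernel than the paper does, and a slightly different bookkeeping of torsion elements. The paper's argument picks any $j\in I$, uses the standard fact that the simple compact Lie group $S_j$ has \emph{finite} center together with the presence of a nontrivial torus (hence infinitely many torsion elements) to produce a single torsion element $s\in S_j\setminus Z(S_j)$; since Proposition~\ref{p.bourbaki} gives $\kernel\varphi\subseteq Z(S)$, the image $\varphi(s)$ is a \emph{nontrivial} torsion element of $[G,G]$, contradicting torsion-freeness. You instead observe that \emph{all} torsion elements of a maximal torus $T\leq S_{i_0}$ must land in $\kernel\varphi$; by density of torsion in $T$ and continuity of $\varphi$, the whole connected torus $T$ sits inside $\kernel\varphi$, contradicting the fact that the kernel is \emph{totally disconnected}. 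So the paper exploits centrality plus finiteness of $Z(S_j)$, while you exploit total disconnectedness plus density of torsion and continuity; both are consequences of Proposition~\ref{p.bourbaki}. Your route has the small advantage of not needing the finiteness of the center of a simple compact Lie group, at the cost of invoking density of rational points in a torus and a continuity argument. Both are valid, essentially the same length, and of comparable difficulty.
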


\begin{proof}
Let $G$ be a compact connected torsion-free group and let $\{S_i:\ i\in I\}$ be the family of simple connected compact Lie groups provided by Proposition~\ref{p.bourbaki}. Also let $\varphi:S\defeq \prod_{i\in I}S_i\to [G,G]$ denote the surjective continuous homomorphism provided by Proposition~\ref{p.bourbaki}.

If $[G,G]=1$, then $G$ is abelian and we are done. Otherwise, $I$ is nonempty and we have $\kernel\varphi\subseteq Z(S)=\prod_{i\in I}Z(S_i)<S$ by Proposition~\ref{p.bourbaki}. We claim that $S$ contains a non-central torsion element. Pick $j\in I$ such that $S_j\setminus Z(S_j)\neq\emptyset$. Clearly, it suffices to prove that $S_j\setminus Z(S_j)$ contains a torsion element. We may assume that $S_j$ is infinite. Then it contains a nontrivial torus (recall that a every infinite compact Lie group contains a nontrivial torus \cite[Lemma~6.20.]{HOFMANN_MORRIS}) and thereby infinitely many torsion elements, while $Z(S_j)$ is finite. Now it follows that $G$ has a nontrivial torsion element, a contradiction.
\end{proof}

\begin{lemma}\label{l.torsion_free_sun}
For $n\geq 2$ the group $SU(n)$ does not occur as a quotient of a compact torsion-free group.
\end{lemma}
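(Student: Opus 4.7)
The plan is to combine Lemma~\ref{l.connected_component} with Proposition~\ref{p.cc_torsion_free} to force an abelian quotient, and then use that $SU(n)$ is nonabelian for $n \geq 2$ to derive a contradiction.

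Suppose for contradiction that there is a compact torsion-free group $G$ and a surjective continuous homomorphism $\varphi : G \to SU(n)$. First I would reduce to the connected case. Since $n \geq 2$, the group $SU(n)$ is connected (as cited in the proof of Theorem~\ref{t.notlie}), so Lemma~\ref{l.connected_component} applies and gives $\varphi(G^\circ) = SU(n)$. Moreover $G^\circ$, as a subgroup of $G$, is still torsion-free, and of course compact and connected. So after replacing $G$ by $G^\circ$ we may assume that $G$ itself is a compact connected torsion-free group admitting $SU(n)$ as a continuous homomorphic image (equivalently, a quotient, for compact groups).

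Now I would invoke Proposition~\ref{p.cc_torsion_free}, which says that every compact connected torsion-free group is abelian. Hence $G$ is abelian, and since abelianness is preserved under taking continuous homomorphic images, $SU(n) = \varphi(G)$ must be abelian too. This contradicts the well-known fact that $SU(n)$ is nonabelian for every $n \geq 2$ (already $SU(2)$, being isomorphic to the group of unit quaternions, is nonabelian).

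No step here looks hard: the heavy lifting was done in the preceding structural results. The only mild subtlety is the reduction to the connected case, which requires the observation that $\varphi(G^\circ) = SU(n)$ rather than merely $\varphi(G^\circ) \subseteq SU(n)$, and this is exactly the content of Lemma~\ref{l.connected_component} together with the connectedness of $SU(n)$ for $n \geq 2$.
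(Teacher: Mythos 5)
Your proof is correct and follows essentially the same route as the paper: reduce to the connected case via Lemma~\ref{l.connected_component}, invoke Proposition~\ref{p.cc_torsion_free} to force abelianness, and contradict the nonabelianness of $SU(n)$ for $n\geq 2$. You merely spell out the reduction step (noting $\varphi(G^\circ)=SU(n)$ and that $G^\circ$ is still compact, connected, and torsion-free) in more detail than the paper does.
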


\begin{proof}
Suppose that $SU(n)$ occurs as a quotient of a compact torsion-free group $G$. By Lemma~\ref{l.connected_component} we may assume that $G$ is connected. Then by Proposition~\ref{p.cc_torsion_free} it must be abelian. Since $SU(n)$ is nonabelian for $n\geq 2$, this is a contradiction.
\end{proof}

\begin{theorem}\label{t.torsion}
In the generic $G\in\cals(\su)$ there are both torsion and nontorsion elements.
\end{theorem}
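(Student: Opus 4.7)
The plan is to derive both assertions from Theorem~\ref{t.full_projection}, which says that for the generic $G \in \cals(\su)$ there are infinitely many $n$ (in particular some $n \geq 2$) with $\pi_n(G) = SU(n)$. Since the intersection of two comeager sets is comeager, it suffices to verify each claim on its own comeager set.

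For the existence of a nontrivial torsion element, on the comeager set provided by Theorem~\ref{t.full_projection}, fix some $n \geq 2$ with $\pi_n(G) = SU(n)$. Then $\pi_n|_G$ exhibits $SU(n)$ as a continuous (hence, by Theorem~\ref{t.surj_cont_hom_open}, topological) quotient of the compact group $G$, so by Lemma~\ref{l.torsion_free_sun} the group $G$ cannot be torsion-free; that is, some $g \in G$ with $g \neq 1_G$ satisfies $g^m = 1_G$ for some $m \in \nat$.

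For the existence of an element of infinite order, the decisive observation is the trivial but useful fact that if $\varphi : A \to B$ is a group homomorphism and $\varphi(a) \in B$ has infinite order, then so does $a \in A$: any relation $a^m = 1$ would force $\varphi(a)^m = 1$. Working on the same comeager set, fix $n \geq 2$ with $\pi_n(G) = SU(n)$. The group $SU(n)$ contains elements of infinite order, for instance $\mathrm{diag}(e^{2\pi i \alpha}, e^{-2\pi i \alpha}, 1, \ldots, 1)$ with $\alpha$ irrational. Any preimage in $G$ of such an element under the surjection $\pi_n|_G$ is then an element of $G$ of infinite order. The whole argument is short once one recognizes that Theorem~\ref{t.full_projection} together with Lemma~\ref{l.torsion_free_sun} already does the heavy lifting; the only mild conceptual point is the asymmetric use of the quotient map in the two halves --- one pulls torsion back through the quotient via Lemma~\ref{l.torsion_free_sun}, while nontorsion lifts freely in the opposite direction --- and no real technical obstacle arises.
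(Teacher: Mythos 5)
Your proof is correct and follows essentially the same route as the paper: both invoke Theorem~\ref{t.full_projection} to get a surjection $\pi_n : G \to SU(n)$ for some $n \geq 2$, then apply Lemma~\ref{l.torsion_free_sun} to rule out torsion-freeness and pull back an infinite-order element of $SU(n)$ to get a nontorsion element of $G$. The only cosmetic difference is that you spell out the (trivial) lifting of infinite order through a homomorphism, while the paper phrases it as $SU(k)$ containing a circle group.
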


\begin{proof}
By Theorem~\ref{t.full_projection} for the generic $G\in\cals(\su)$ some $SU(k)$ is a quotient of $G$ with $k\geq 2$. Since $SU(k)$ contains a circle group, it contains elements of infinite order, and so does $G$. On the other hand, by Lemma~\ref{l.torsion_free_sun}, $G$ cannot be a torsion-free group either.
\end{proof}

\subsection{Free subgroups}

\begin{notation}\label{n.generated_free_group}
For a group $G$ and a number $m\in\nat$ let
$$
F(m,G):=\{ (g_1,...,g_m) \in G^m:\ g_1,...,g_m  \text{ freely generate a free group in }G \}.
$$
For any set $A$ let $(A)^m\defeq\{(x_1,\ldots,x_m)\in A^m:\ x_i\neq x_j \text{ if }i\neq j\}$.
\end{notation}

\begin{lemma}\label{l.free_in_sun}
For every $n\geq 3$ and $m\in\nat$ the set $F(m,SU(n))$ is comeager in $SU(n)^m$.
\end{lemma}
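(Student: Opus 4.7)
The strategy is to realize the complement $SU(n)^m \setminus F(m, SU(n))$ as a countable union of closed nowhere dense subsets of the compact Polish space $SU(n)^m$, and then appeal to the Baire category theorem.

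For every nontrivial reduced word $w = w(x_1, \ldots, x_m)$ in the free group on $m$ letters I would set
\[ Z_w \defeq \{(g_1, \ldots, g_m) \in SU(n)^m : w(g_1, \ldots, g_m) = 1_{SU(n)}\}. \]
There are only countably many such $w$, and directly from the definition of $F(m, SU(n))$ one has $F(m, SU(n)) = SU(n)^m \setminus \bigcup_{w \neq e} Z_w$. Each $Z_w$ is closed because the word map $w \colon SU(n)^m \to SU(n)$ is continuous — in fact it is a polynomial map in the entries of the $g_i$, since inversion in $SU(n)$ is given by the conjugate transpose and hence is itself linear in the entries. The task therefore reduces to showing that each $Z_w$ has empty interior.

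For this I would exploit real-analyticity. The product $SU(n)^m$ is a connected real-analytic manifold and $w$ is a real-analytic (indeed polynomial) map, so by the identity theorem for real-analytic maps on a connected manifold, if $Z_w$ contained a nonempty open subset then $w$ would be identically $1_{SU(n)}$ on all of $SU(n)^m$. To rule this out it suffices to exhibit, for every nontrivial reduced $w$, a single tuple $(h_1, \ldots, h_m) \in SU(n)^m$ with $w(h_1, \ldots, h_m) \neq 1$. The assumption $n \geq 3$ yields an inclusion chain $SO(3) \leq SO(n) \leq SU(n)$, and by Hausdorff's classical construction $SO(3)$ contains free subgroups of any countable rank; taking $h_1, \ldots, h_m$ to be free generators of such a subgroup simultaneously defeats every nontrivial $w$. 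Baire category then closes the argument.

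The main obstacle is the empty-interior step; its only essential ingredients are the real-analytic identity theorem and the existence of free subgroups of $SU(n)$, both of which are standard. Everything else — closedness of $Z_w$, the countable enumeration of words, and Baire category in the compact space $SU(n)^m$ — is routine. The only minor point worth mentioning is the degenerate case $m = 1$, where the nontrivial reduced words are just $x_1^k$ with $k \neq 0$; the argument goes through unchanged since $SO(3) \leq SU(n)$ already contains elements of infinite order.
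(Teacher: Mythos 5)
Your proposal is correct and follows essentially the same route as the paper: both decompose the complement over nontrivial reduced words, show each vanishing set is closed with empty interior via real-analyticity of the word map on the connected manifold $SU(n)^m$ together with the existence of a free subgroup of rank $m$ in $SU(n)$ (via $SO(3)$), and conclude by Baire category. The paper cites Epstein for the analytic identity theorem and the free subgroup, but the mathematical content is identical to yours.
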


\begin{proof}
Fix $m \in \nat$ and a nonempty reduced word $W(x_1,...,x_m)$ in $m$ variables arbitrarily. Consider the analytic map
$$
f_W:SU(n)^n \to SU(n), \quad (g_1,...,g_m) \mapsto W(g_1,...,g_m),
$$
from a connected analytic manifold to an analytic manifold (see \cite{EPSTEIN}).

\textbf{Claim.} The compact set $f_W^{-1}(\identity_{SU(n)})$ has empty interior, hence it is nowhere dense.

Suppose $\interior(f_W^{-1}(\identity_{SU(n)}))\neq\emptyset$. Then the analytic map $f_W$ is constant on a nonempty open set. Therefore, it is constant on $SU(n)^m$ (see, for example, \cite{EPSTEIN}). On the other hand, it is well-known (and also follows from \cite{EPSTEIN}) that $SO(3)$ contains a free group of rank $m$, hence $SU(n)$ also contains a free group of rank $m$. This contradicts that $f_W$ is constant on $SU(n)^m$, which proves the claim.

Notice that we can write
$$
F(m,SU(n)) = \bigcap\limits_{\substack{W \textit{ is a nonempty reduced } \\ \textit{word in } m \textit{ variables}}} (SU(n)^m \setminus f_W^{-1}(\identity_{SU(n)})).
$$
By the claim, the right-hand side is a countable intersection of dense open sets, which concludes the proof. 
\end{proof}

We need the following classical result from descriptive set theory.

\begin{theorem}[Kuratowski--Mycielski, {\cite[Theorem~19.1]{KECHRIS}}]\label{t.kuratowski_mycielski}
Let $X$ be a metrizable space, let $m_1,m_2,\ldots$ be a sequence of natural numbers. If $R_i\subseteq X^{m_i}$ is comeager for each $i\in\nat$, then
$$\{K\in\calk(X):\ \forall i\in\nat\ ((K)^{m_i}\subseteq R_i)\}$$
is comeager in $K(X)$.
\end{theorem}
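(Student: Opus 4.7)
The plan is to reduce first to the case of a single dense open set and then express the resulting subset of $\calk(X)$ as a countable intersection of dense open sets. Since a countable intersection of comeager sets is comeager, it suffices to verify the following single-set version: for every $m\in\nat$ and every dense open $W\subseteq X^m$, the set $\mathcal{M}_W:=\{K\in\calk(X):\ (K)^m\subseteq W\}$ is comeager in $\calk(X)$. Indeed, writing each $R_i$ as $\bigcap_n W_{i,n}$ with $W_{i,n}$ dense open in $X^{m_i}$ reduces the theorem to this claim.

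To handle $\mathcal{M}_W$, for each $n\in\nat$ I would set $C_n:=\{(x_1,\dots,x_m)\in X^m:\ d(x_i,x_j)\geq 1/n\ \text{for all}\ i\neq j\}$, which is closed in $X^m$, and define
$$\mathcal{M}_W^n:=\{K\in\calk(X):\ K^m\cap C_n\subseteq W\}.$$
Since $\bigcup_n C_n$ is the complement of the fat diagonal in $X^m$, one has $\mathcal{M}_W=\bigcap_n\mathcal{M}_W^n$, so it suffices to prove each $\mathcal{M}_W^n$ is dense open in $\calk(X)$.

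For openness, I would use that $K\mapsto K^m$ is continuous from $\calk(X)$ to $\calk(X^m)$ (a standard property of the Vietoris topology). If $K\in\mathcal{M}_W^n$, then the compact set $K^m\cap C_n$ lies inside the open $W$, while the compact set $K^m\setminus W$ is disjoint from $C_n$ and hence at positive distance from it. A sufficiently small Hausdorff perturbation of $K$ preserves both facts and so stays in $\mathcal{M}_W^n$. For density, given any nonempty Vietoris basic open set $\mathcal{B}=\{K:\ K\subseteq U,\ K\cap V_j\neq\emptyset\ \text{for}\ j=1,\dots,\ell\}$, I would look for a finite witness $F=\{y_1,\dots,y_\ell\}\subseteq U$ with $y_j\in V_j$ and $(F)^m\subseteq W$, so that $F\in\mathcal{B}\cap\mathcal{M}_W\subseteq\mathcal{B}\cap\mathcal{M}_W^n$. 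For each ordered $m$-tuple $(i_1,\dots,i_m)$ of pairwise distinct indices in $\{1,\dots,\ell\}$, the set $\{(y_1,\dots,y_\ell)\in\prod_j(U\cap V_j):\ (y_{i_1},\dots,y_{i_m})\in W\}$ is the preimage of $W$ under the coordinate projection $\prod_j(U\cap V_j)\to(U\cap V_{i_1})\times\cdots\times(U\cap V_{i_m})$, which is continuous, open and surjective; hence this preimage is dense open in $\prod_j(U\cap V_j)$. Intersecting the finitely many such dense open sets produces a nonempty open set, any element of which gives a suitable $F$.

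The hard part I expect is the openness argument: the assignment $K\mapsto K^m\cap C_n$ is only upper semicontinuous (intersection with a closed set can make points disappear), so one must balance two separate quantities, namely the distance from $K^m\cap C_n$ to the closed set $X^m\setminus W$ and the distance from $K^m\setminus W$ to $C_n$, and check that one Hausdorff perturbation controls both simultaneously. The remaining steps are routine Baire-category and Vietoris-topology manipulations.
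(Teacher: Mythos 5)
The paper does not prove this theorem; it cites it directly from Kechris (Theorem~19.1), so there is no in-paper proof to compare against. Your overall strategy — reduce to a single dense open $W\subseteq X^m$, peel off the diagonal with the closed sets $C_n$, and show each $\mathcal{M}_W^n$ is dense open — is a sound route, and your density argument (finding a finite witness via finitely many dense open subsets of $\prod_j(U\cap V_j)$) is correct.

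However, the openness step as you describe it has a genuine gap, and it is not merely a matter of ``balancing'' the two distances you name. The quantities $d\bigl(K^m\cap C_n,\ X^m\setminus W\bigr)$ and $d\bigl(K^m\setminus W,\ C_n\bigr)$ do not control what happens on the third region $K^m\cap W\setminus C_n$: a tuple there can be simultaneously very close to $C_n$ and very close to $X^m\setminus W$, and its small perturbation in $L^m\cap C_n$ can fall outside $W$. Concretely, take $X=\mathbb{R}$, $m=2$, $n=1$, $W=\mathbb{R}^2\setminus\{(1,2)\}$, $K=\{0,1,1.9\}$. Then $K^2\setminus W=\emptyset$ (so the second distance is $\infty$) and $K^2\cap C_1=\{(0,1),(1,0),(0,1.9),(1.9,0)\}$ is at distance about $1$ from $(1,2)$; yet $L=\{0,1,2\}$ satisfies $d_H(K,L)=0.1$ and $(1,2)\in L^2\cap C_1\setminus W$, so $L\notin\mathcal{M}_W^1$. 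The culprit is the point $(1,1.9)\in K^2\cap W\setminus C_1$. The set $\mathcal{M}_W^n$ is in fact open, but you need an extra compactness argument: since $K^m\cap C_n$ is compact inside the open $W$, and $K^m\cap(\overline{C_n})_\rho$ decreases to $K^m\cap C_n$ as $\rho\to 0$, there is $\rho_0>0$ with $K^m\cap(\overline{C_n})_{\rho_0}\subseteq W$, and one then takes $\eps$ small relative both to $\rho_0$ and to $d\bigl(K^m\cap(\overline{C_n})_{\rho_0},\ X^m\setminus W\bigr)$. (Equivalently, and more cleanly, exploit the gap between $C_n$ and $C_{n+1}$: show $\mathcal{M}_W^{n+1}\subseteq\mathrm{int}(\mathcal{M}_W^n)$, which still gives that $\mathcal{M}_W=\bigcap_n\mathcal{M}_W^n$ is a dense $G_\delta$.)
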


\begin{theorem}
For the generic $G\in\cals(\su)$ the generic $K\in\calk(G)$ freely generates a free group of rank continuum in $G$.
\end{theorem}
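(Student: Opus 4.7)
The plan is to combine Theorem~\ref{t.full_projection}, Lemma~\ref{l.free_in_sun} and the Kuratowski--Mycielski theorem (Theorem~\ref{t.kuratowski_mycielski}).

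First, I would fix a generic $G\in\cals(\su)$; by Theorem~\ref{t.full_projection} I may assume that $\pi_n(G)=SU(n)$ for some fixed $n\geq 3$. By Theorem~\ref{t.surj_cont_hom_open}, the surjective continuous homomorphism $\pi_n|_G:G\to SU(n)$ is open, hence so is the product map $(\pi_n)^m:G^m\to SU(n)^m$ for every $m\in\nat$. Since an open continuous surjection pulls back nowhere dense closed sets to nowhere dense closed sets (the forward image of any open set is open), it pulls back comeager sets to comeager sets. Combined with Lemma~\ref{l.free_in_sun}, this yields that $((\pi_n)^m)^{-1}(F(m,SU(n)))$ is comeager in $G^m$. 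Moreover, any tuple in this preimage must itself freely generate a free group in $G$, since any nontrivial relation in $G$ would project to a nontrivial relation in $SU(n)$. Hence $F(m,G)$ is comeager in $G^m$ for every $m\in\nat$.

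Next, I would apply Theorem~\ref{t.kuratowski_mycielski} with $X=G$, $m_i=i$ and $R_i=F(i,G)$, so that
$$\mathcal{H}\defeq\{K\in\calk(G):\ (K)^m\subseteq F(m,G)\text{ for every }m\in\nat\}$$
is comeager in $\calk(G)$. Every $K\in\mathcal{H}$ has the property that any finite tuple of pairwise distinct elements of $K$ freely generates a free subgroup of $G$; since freeness of a generating set is a local condition, this means that $K$ itself is a free basis of $\langle K\rangle\leq G$, which is therefore free of rank $|K|$.

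Finally, I would verify that $|K|=\mathfrak{c}$ for the generic $K\in\calk(G)$. By Corollary~\ref{c.id_comp_nontrivial} and Corollary~\ref{c.cantor_x_conncomp}, the generic $G$ is homeomorphic to $G^\circ\times C$ with $G^\circ$ a nontrivial connected compact group and $C$ the Cantor set; in particular such a $G$ is a perfect Polish space. It is classical that in a perfect Polish space the set of Cantor subsets is a dense $G_\delta$ in $\calk(G)$, hence comeager, and every Cantor set has cardinality $\mathfrak{c}$. Intersecting this comeager set with $\mathcal{H}$ gives the desired conclusion. The main obstacle, and the linchpin of the argument, is the transfer of Lemma~\ref{l.free_in_sun} from $SU(n)^m$ to $G^m$; this hinges on the openness of $\pi_n|_G$ provided by Theorem~\ref{t.surj_cont_hom_open}, which is what allows the pullback of comeagerness.
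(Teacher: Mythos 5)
Your proposal is correct and follows essentially the same route as the paper's proof: reduce to the case $\pi_n(G)=SU(n)$ via Theorem~\ref{t.full_projection}, pull back comeagerness of $F(m,SU(n))$ through the open map $\pi_n^m$ (the paper cites a proposition of Melleray for this pullback, while you argue it directly from the preservation of nowhere dense closed sets — both fine), then apply Kuratowski--Mycielski. The only cosmetic difference is at the end: the paper quotes the standard exercise that the generic compact subset of a perfect Polish space has cardinality $\mathfrak c$, while you derive perfectness of $G$ from Corollary~\ref{c.cantor_x_conncomp} before invoking the same fact; one could even shortcut this by noting that any infinite compact metrizable group is perfect.
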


\begin{proof}
By Theorem~\ref{t.full_projection}, for the generic $G\in\cals(\su)$ there is $n\geq 3$ such that $\pi_n(G)=SU(n)$.  Fix such $G$ and $n$. We claim that the generic $K\in\calk(G)$ freely generates a free group of rank continuum in $G$.

Recall that by Theorem~\ref{t.surj_cont_hom_open}, the map $\pi_n:G\to SU(n)$ is continuous and open. It follows that the map
$\pi_n^m: G^m\to SU(n)^m$, $(g_1,\ldots,g_m)\mapsto (\pi_n(g_1),\ldots,\pi_n(g_m))$ is also continuous and open for every $m\in\nat$. Then, by Lemma~\ref{l.free_in_sun} and \cite[Proposition~2.8]{MELLERAY}, the inverse image $B_m\defeq(\pi_n^m)^{-1}(F(m,SU(n)))$ is comeager in $G^m$ for every $m\in\nat$. Observe that for every $m\in\nat$ and $(g_1,\ldots,g_m)\in B_m$ the elements $g_1,\ldots,g_m$ freely generate a free group in $G$ because, since $\pi_n$ is a homomorphism, a nontrivial relation of the $g_i$ in $G$ would give us a nontrivial relation of the $\pi_n(g_i)$ in $SU(n)$.

Now by Theorem~\ref{t.kuratowski_mycielski}, for the generic $K\in\calk(G)$ for every $m\in\nat$ we have $(K)^m\subseteq B_m$. Therefore, the generic $K\in\calk(G)$ freely generates a free group in $G$. Furthermore, by \cite[(8.8)~Exercise~i)]{KECHRIS}, the generic $K\in\calk(G)$ has continuum many elements, which concludes the proof.
\end{proof}

%\begin{prop}\label{p.free_subgroup}
%If $G$ is a topological group, $m\geq 2$ and $\varphi:G\to SU(m)$ is a surjective, continuous, open homomorphism, then for every $n\in\nat$ the set $F(n,G)$ is comeager in $G^n$.
%\end{prop}

%\begin{proof}
%Fix any $m\geq 2$ and $n\in\nat$. By [REF], the set $F(n,SU(m))$ is comeager in $SU(m)^n$. Notice that $\varphi^n:G^n\to SU(m)^n$, $(g_1,\ldots,g_n)\mapsto (\varphi(g_1),\ldots,\varphi(g_n))$ is also a surjective, continuous open map. Therefore, by [REF], $H\defeq(\varphi^n)^{-1}(F(n,SU(m)))$ is comeager in $G^n$.

%Observe that if $(g_1,\ldots,g_n)\in H$, then $g_1,\ldots,g_n$ generate a free group of rank $n$ in $G$ because, since $\varphi$ is a homomorphism, a nontrivial relation of the $g_i$ would give us a nontrivial relation of the $\varphi(g_i)$ in $SU(m)$, a contradiction. Thus $F(n,G)\supseteq H$ is comeager in $G^n$. 
%\end{proof}

%\begin{cor}
%For the generic $G\in\cals(\su)$ for every $n\in\nat$ the set $F(n,G)$ is comeager.
%\end{cor}

%\begin{proof}
%It follows immediately from Theorem~\ref{t.full_projection}, Theorem~\ref{t.surj_cont_hom_open} and Proposition~\ref{p.free_subgroup}.
%\end{proof}

\section{Questions}

As a direct analogue of Definition~\ref{d.generic_property} we may introduce a notion of genericity among countable groups. Let $F_\infty$ denote the free group on countably infinitely many generators. It is straightforward to verify that the subspace $\caln\defeq\{A\subseteq F_\infty:\ A\triangleleft F_\infty\}$ is closed in $2^{F_\infty}$. Let us say that a property $\calp$ of countable groups is $\caln$-generic if the set $\{N\in\caln:\ F_\infty/N\text{ is of property }\calp\}$ is comeager in $\caln$.

\begin{problem}
It would be interesting to study $\caln$-genericity and explore its connections to results of Section~\ref{s.countably_infinite_discrete_groups}.
\end{problem}

For partial results in the abelian case see Subsection~\ref{ss.revisited}. The above notion of genericity was considered in \cite{OSIN} and \cite[page 16]{GOLDBRING}.

Although we obtained results on the structure of the generic compact metrizable group, a lot of basic questions remained open.

\begin{question}
Is there a comeager isomorphism class in $\cals(\su)$?
\end{question}

We also propose problems that seem more accessible.

\begin{problem}
Describe $G^\circ$ up to homeomorphism for the generic $G\in\cals(\su)$, if this is possible.
\end{problem}

\begin{remark}
One may think that $G^\circ$ is connected but behaves so wildly that it does not contain any nontrivial path. However, we claim that for the generic $G\in\cals(\su)$ there are many paths (even 1-parameter subgroups) in $G^\circ$. First, by Corollary~\ref{c.id_comp_full_proj}, we know that for the generic $G\in\cals(\su)$ for infinitely many $n\in\nat$ the projection $\pi_n:G^\circ \to SU(n)$ is surjective. Fix such $G$ and $n\geq 2$. Second, by \cite[Theorem~11.9]{HALL}, every $p \in SU(n)$ is contained in a (maximal) torus, so every point $p$ lies on a one parameter subgroup of $SU(n)$. Third, by \cite[Lemma~4.19]{hofmann2007lie}, every one parameter subgroup can be lifted among pro-Lie groups, that is, if $q:G \to H$ is a quotient morphism of pro-Lie groups and $\gamma:\mathbb{R} \to H$ is a one parameter subgroup, then there exists a one parameter subgroup $\tilde{\gamma}:\mathbb{R} \to G$ such that $\gamma = q \circ \tilde{\gamma}$. Thus, for every $p\in SU(n)$ there is a 1-parameter subgroup in $G^\circ$ that meets the set $\pi_n^{-1}(p)$.
\end{remark}

\begin{question}
What can we say about generic algebraic properties in $\cals(\su)$?
\end{question}

\section*{Acknowledgement}
%\addcontentsline{toc}{section}{Acknowledgement}

We would like to express our gratitude to Yves de Cornulier, Bal{\'a}zs Csik{\'o}s and Krist{\'o}f Kanalas for their help. Yves de Cornulier contributed by providing Proposition~\ref{p.bourbaki} and Proposition~\ref{p.cc_torsion_free} on mathoverflow.com. Bal{\'a}zs Csik{\'o}s helped us several times when we dealt with Lie groups. Krist{\'o}f Kanalas worked with us while we developed the material of Section~\ref{s.countably_infinite_discrete_groups}.

This research was supported by the National Research, Development and Innovation Office -- NKFIH, grant no. 124749 and 146922.
The first author was also supported by the National Research, Development and Innovation Office -- NKFIH, grant no.~129211. The third author was also supported by the ÚNKP-21-3 New National Excellence Program of the Ministry for Innovation and Technology from the source of the National Research, Development and Innovation Fund. The fourth author was also supported by the National Research, Development and Innovation Office -- NKFIH, grant no.~129335. The fifth author was supported by the ÚNKP-21-1 New National Excellence Program of the Ministry for Innovation and Technology from the source of the National Research, Development and Innovation Fund. The last author was supported by Rényi Doctoral Fellowship.

%\bibliographystyle{acm}
%\bibliography{biblio}

\begin{thebibliography}{10}

\bibitem{AKIN}
{\sc Akin, E., Hurley, M., and Kennedy, J.~A.}
\newblock Dynamics of topologically generic homeomorphisms.
\newblock {\em Memoirs of the American Mathematical Society 164}, 783 (2003).

\bibitem{BOURBAKI_7-9}
{\sc Bourbaki, N.}
\newblock {\em Lie groups and {Lie} algebras}, vol.~Chapters 7-9.
\newblock Springer, Berlin, 2004.

\bibitem{BOURBAKI_1-3}
{\sc Bourbaki, N.}
\newblock {\em Lie groups and {Lie} algebras}, vol.~Chapters 1-3.
\newblock Springer, Berlin, 2004.

\bibitem{CSIKOS}
{\sc Csik{\'o}s, B., K{\'a}tay, T., Kocsis, A., and P{\'a}lfy, M.}
\newblock Compact {Lie} groups isolated up to conjugacy.
\newblock {\em arXiv:2209.15389\/} (2022).

\bibitem{DOUCHA}
{\sc Doucha, M., and Malicki, M.}
\newblock Generic respresentations of countable groups.
\newblock {\em Transactions of the American Mathematical Society 372}, 11 (2017), 8249--8277.

\bibitem{ELEKES}
{\sc Elekes, M., Geh{\'e}r, B., Kanalas, K., K{\'a}tay, T., and Keleti, T.}
\newblock Generic countably infinite groups.
\newblock {\em arXiv:2110.15902\/} (2021).

\bibitem{EPSTEIN}
{\sc Epstein, D.~B.~A.}
\newblock Almost all subgroups of a {Lie} group are free.
\newblock {\em Journal of Algebra 19\/} (1971), 261--262.

\bibitem{FISHER_GARTSIDE_1}
{\sc Fisher, S., and Gartside, P.~M.}
\newblock On the space of subgroups of a compact group {I}.
\newblock {\em Topology and its applications 156}, 5 (2009), 862--871.

\bibitem{FISHER_GARTSIDE_2}
{\sc Fisher, S., and Gartside, P.~M.}
\newblock On the space of subgroups of a compact group {II}.
\newblock {\em Topology and its applications 156}, 5 (2009), 855--861.

\bibitem{FUCHS}
{\sc Fuchs, L.}
\newblock {\em Abelian Groups}.
\newblock Springer, Switzerland, 2015.

\bibitem{GOLDBRING}
{\sc Goldbring, I., Lodha, Y., and Kunnawalkam~Elayavalli, S.}
\newblock Generic algebraic properties in spaces of enumerated groups.
\newblock {\em arXiv:2006.14048 [v4]\/} (2021).

\bibitem{HAAGERUP}
{\sc Haagerup, U., and Przybyszewska, A.}
\newblock Proper metrics on locally compact groups, and proper affine isometric actions on {Banach} spaces.
\newblock {\em arXiv: 0606794\/} (2006).

\bibitem{HALL}
{\sc Hall, B.}
\newblock {\em Lie Groups, {Lie} Algebras and Representations}.
\newblock Springer, Switzerland, 2015.

\bibitem{HELGASON}
{\sc Helgason, S.}
\newblock {\em Differential Geometry, {Lie} Groups and Symmetric Spaces}.
\newblock Academic Press, New York, 1978.

\bibitem{HEWITT_ROSS}
{\sc Hewitt, E., and Ross, K.~A.}
\newblock {\em Abstract Harmonic Analysis}, vol.~1.
\newblock Springer, Berlin, 1997.

\bibitem{HODGES_1985}
{\sc Hodges, W.}
\newblock {\em Building Models by Games}.
\newblock Cambridge University Press, 1985.

\bibitem{HODGES}
{\sc Hodges, W.}
\newblock {\em Model Theory}.
\newblock Cambridge University Press, Cambridge, 2008.

\bibitem{hofmann2007lie}
{\sc Hofmann, K.~H., and Morris, S.~A.}
\newblock {\em The {Lie} theory of connected pro-{Lie} groups: a structure theory for pro-{Lie} algebras, pro-{Lie} groups, and connected locally compact groups}, vol.~2.
\newblock European Mathematical Society, 2007.

\bibitem{OPEN_MAP}
{\sc Hofmann, K.~H., and Morris, S.~A.}
\newblock Open mapping theorem for topological groups.
\newblock {\em Topology Proceedings 31}, 2 (2007), 533--551.

\bibitem{HOFMANN_MORRIS}
{\sc Hofmann, K.~H., and Morris, S.~A.}
\newblock {\em The structure of compact groups}.
\newblock de Gruyter, 2020.

\bibitem{KECHRIS}
{\sc Kechris, A.~S.}
\newblock {\em Classical descriptive set theory}.
\newblock Springer, New York, 2012.

\bibitem{KECHRIS_ROSENDAL}
{\sc Kechris, A.~S., and Rosendal, C.}
\newblock Turbulence, amalgamation, and generic automorphisms of homogeneous structures.
\newblock {\em Proceedings of the London Mathematical Society 94}, 2 (2007), 302--350.

\bibitem{LEE}
{\sc Lee, J.~M.}
\newblock {\em Introduction to Smooth Manifolds}.
\newblock Springer, New York, 2003.

\bibitem{MACINTYRE2}
{\sc Macintyre, A.}
\newblock Omitting quantifier-free types in generic structures.
\newblock {\em The Journal of Symbolic Logic 37}, 3 (1972), 512--520.

\bibitem{MACINTYRE1}
{\sc Macintyre, A.}
\newblock On algebraically closed groups.
\newblock {\em Annals of Mathematics 96}, 1 (1972), 53--97.

\bibitem{MASSEY}
{\sc Massey, W.~S.}
\newblock {\em A basic course in algebraic topology}.
\newblock Springer, New York, 1991.

\bibitem{MELLERAY}
{\sc Melleray, J.}
\newblock Extensions of generic measure-preserving actions.
\newblock {\em Annales de l'institut Fourier 64}, 2 (2014), 607--623.

\bibitem{MORRIS}
{\sc Morris, S.~A.}
\newblock {\em Pontryagin Duality and the Structure of Locally Compact Abelian Groups}.
\newblock Cambridge University Press, Cambridge, 1977.

\bibitem{MOSTOW}
{\sc Mostow, G.~D.}
\newblock Equivariant embeddings in {Euclidean} space.
\newblock {\em Annals of Mathematics 65}, 3 (1956), 432--446.

\bibitem{NEUMANN2}
{\sc Neumann, B.~H.}
\newblock A note on algebraically closed groups.
\newblock {\em Journal of the London Mathematical Society 27}, 2 (1952), 247--249.

\bibitem{NEUMANN3}
{\sc Neumann, B.~H.}
\newblock An essay on free products of groups with amalgamations.
\newblock {\em Philosophical Transactions of the Royal Society A 246\/} (1954), 503--554.

\bibitem{NEUMANN1}
{\sc Neumann, B.~H.}
\newblock The isomorphism problem for algebraically closed groups.
\newblock {\em Studies in Logic and the Foundations of Mathematics 71\/} (1973), 553--562.

\bibitem{OSIN}
{\sc Osin, D.}
\newblock A topological zero-one law and elementary equivalence of finitely generated groups.
\newblock {\em Annals of Pure and Applied Logic 172}, 3 (2021).

\bibitem{PFEIFER}
{\sc Pfeifer, W.}
\newblock {\em The {Lie} Algebras $su(N)$}.
\newblock Birkhauser, Basel, 2003.

\bibitem{RUDIN}
{\sc Rudin, W.}
\newblock {\em Fourier Analysis on Groups}.
\newblock Interscience Publishers, Netherlands, 1962.

\bibitem{SARD}
{\sc Sard, A.}
\newblock The measure of the critical values of differentiable maps.
\newblock {\em Bulletin of the American Mathematical Society 48}, 12 (1942), 883--890.

\bibitem{SCHMIDT}
{\sc Schmidt, W.~M.}
\newblock {\em Diophantine Approximation}.
\newblock Springer, Berlin, 1980.

\bibitem{SCOTT}
{\sc Scott, W.~R.}
\newblock Algebraically closed groups.
\newblock {\em Proceedings of the American Mathematical Society 2}, 1 (1951), 118--121.

\bibitem{SIMMONS}
{\sc Simmons, H.}
\newblock The word-problem for absolute presentations.
\newblock {\em Journal of the London Mathematical Society 6}, 2 (1973), 275--280.

\bibitem{TURING}
{\sc Turing, A.~M.}
\newblock Finite approximations to {Lie} groups.
\newblock {\em Annals of Mathematics 39}, 1 (1938), 105--111.

\bibitem{WANG}
{\sc Zuoqin, W.}
\newblock Cartan's closed subgroup theorem.
\newblock \url{http://staff.ustc.edu.cn/~wangzuoq/Courses/13F-Lie/Notes/Lec\%2011.pdf}, 2013.

\end{thebibliography}

\end{document}